\def\11{\mathbbm{1}}
\def\ER{Erd\H{o}s-R\'enyi\ }
\def\Fop{\operatorname{F}}
\newcommand{\op}{\operatorname{op}}
\newcommand{\Pb}{\mathbb P}
\newtheorem{thm}{Theorem}[section]
\newtheorem{proposition}[thm]{Proposition}
\newtheorem{lemma}[thm]{Lemma}
\newtheorem{cor}[thm]{Corollary}
\newtheorem{claim}[thm]{Claim}
\newtheorem{remark}[thm]{Remark}
\numberwithin{equation}{section}
\newenvironment{breakablealgorithm}
{
		\begin{center}
			\refstepcounter{algorithm}
			\hrule height.8pt depth0pt \kern2pt
			\renewcommand{\caption}[2][\relax]{
				{\raggedright\textbf{\ALG@name~\thealgorithm} ##2\par}%
				\ifx\relax##1\relax 
				\addcontentsline{loa}{algorithm}{\protect\numberline{\thealgorithm}##2}%
				\else 
				\addcontentsline{loa}{algorithm}{\protect\numberline{\thealgorithm}##1}%
				\fi
				\kern2pt\hrule\kern2pt
			}
		}{
		\kern2pt\hrule\relax
	\end{center}
}
\title{The Umeyama algorithm for matching correlated Gaussian geometric models in the low-dimensional regime}
\author{Shuyang Gong\\Peking University\and Zhangsong Li\\Peking University}
\date{\today}
\begin{document}
\maketitle

\begin{abstract}
    Motivated by the problem of matching two correlated random geometric graphs, we study the problem of matching two Gaussian geometric models correlated through a latent node permutation. Specifically, given an unknown permutation $\pi^*$ on $\{1,\ldots,n\}$ and given $n$ i.i.d.\ pairs of correlated Gaussian vectors $\{X_{\pi^*(i)},Y_i\}$ in $\mathbb{R}^d$ with noise parameter $\sigma$, we consider two types of (correlated) weighted complete graphs with edge weights given by $A_{i,j}=\langle X_i,X_j \rangle$, $B_{i,j}=\langle Y_i,Y_j \rangle$. 
    The goal is to recover the hidden vertex correspondence $\pi^*$ based on the observed matrices $A$ and $B$. 
    For the low-dimensional regime where $d=O(\log n)$, 
    Wang, Wu, Xu, and Yolou \cite{WWXY22+} established the information thresholds for exact and almost exact recovery in matching correlated Gaussian geometric models. They also conducted numerical experiments for the classical Umeyama algorithm.
    In our work, we prove that this algorithm achieves exact recovery of $\pi^*$ when the noise parameter $\sigma=o(d^{-3}n^{-2/d})$, and almost exact recovery when $\sigma=o(d^{-3}n^{-1/d})$. 
    Our results approach the information thresholds up to a $\operatorname{poly}(d)$ factor in the low-dimensional regime.  
\end{abstract}

\section{Introduction}

In this paper, we consider the problem of matching two types of correlated Gaussian geometric models, namely, the {\em dot-product model} and the {\em distance model}. Specifically, we consider the Gaussian model
\begin{equation}{\label{eq-def-correlated-vector}}
    Y_i = X_{\pi^*(i)} + \sigma Z_i \mbox{ for } i=1,\ldots,n \,,
\end{equation}
where $X_i,Z_i$'s are i.i.d.\ standard $d$-dimensional Gaussian vectors and $\pi^*$ is an unknown permutation on $[n]=\{1,\ldots,n\}$. 
In the matrix form we have
\begin{equation}{\label{eq-def-correlated-Gaussian-matrix}}
    Y=\Pi^* X+\sigma Z \,,
\end{equation}
where $X,Y,Z \in \mathbb{R}^{n*d}$ are matrices whose rows are $X_i$'s, $Y_i$'s and $Z_i$'s respectively, $\Pi^* \in \mathfrak{S}_n$ denotes the permutation matrix corresponding to $\pi^*$ (here $\mathfrak{S}_n$ is the collection of all permutation matrices). For the dot-product model, the observations are two Wishart matrices $A=XX^{\top}$ and $B=YY^{\top}$. For the distance model, the observations are two matrices $\widehat A$ and $\widehat B$ with edge weights given by $\widehat A_{i,j}=\|X_i-X_j\|_2$ and $\widehat B_{i,j}=\|Y_i-Y_j\|_2$.



Our goal is to recover the latent permutation matrix $\Pi^*$ given the observations $A$ and $B$ (respectively, $\widehat{A}$ and $\widehat{B}$). In the following we consider the classical Umeyama algorithm proposed in \cite{Umeyama88} for solving graph matching type of problems efficiently (see also \cite[Section 3]{WWXY22+} for a thorough discussion on the intuition behind this algorithm as well as valuable numerical experiments). This algorithm takes two $n\!*\!n$ symmetric matrices $A,B$ with rank at most $d$ as inputs and performs the following procedure: let
\begin{equation}{\label{eq-spectral-decomposition-A-B}}
    A=\sum_{i=1}^{d} \lambda_i u_i u_i^{\top} \mbox{ and } B=\sum_{i=1}^{d} \mu_i v_i v_i^{\top}
\end{equation}
be the spectral decompositions of $A$ and $B$ respectively, where $\lambda_1 \geq \ldots \geq \lambda_d$ and $\mu_1 \geq \ldots \geq \mu_d$. Consider the following estimator
\begin{equation}{\label{eq-def-Umeyama-estimator}}
    \widehat{\Pi}_{\operatorname{Umeyama}} = \arg \max_{\Pi \in \mathfrak{S}_n} \Bigg\{ \max_{s \in \{-1,+1\}^d} \Bigg\langle \Pi, \sum_{i=1}^{d} s_i v_i u_i^{\top} \Bigg\rangle \Bigg\} \,.
\end{equation}
Denoting $U=[u_1,\ldots,u_d]$ and $V=[v_1,\ldots,v_d]$ respectively,
this estimator can also be expressed in the following matrix form
\begin{equation}{\label{eq-Umeyama-estimator-matrix-form}}
    \widehat{\Pi}_{\operatorname{Umeyama}} = \arg \max_{\Pi \in \mathfrak{S}_n} \Bigg\{ \max_{\Psi \in \mathcal{S}_d} \Big\langle \Pi U \Psi, V \Big\rangle \Bigg\} \,,
\end{equation}
where $\mathcal{S}_d$ is the family of $d\!*\!d$ diagonal matrices with diagonal entries in $\{-1,+1\}$. We formally present this algorithm as follows:
\begin{breakablealgorithm}{\label{Umeyama Algorithm}}
	\caption{Umeyama Algorithm}
    \begin{algorithmic}[1]
    \STATE \textbf{Input}: Symmetric matrices $A$ and $B$.
    \STATE Calculate the spectral decompositions \eqref{eq-spectral-decomposition-A-B}.
    Write $U=[u_1,\ldots,u_d]$ and $V=[v_1,\ldots,v_d]$ respectively.
    \FOR{$\Psi \in \mathcal{S}_d$}
    \STATE Find $\widehat{\Pi}(\Psi)= \arg\max \{ \langle \Pi U \Psi, V \rangle : \Pi\in\mathfrak S_n \}$.
    \STATE Record $\operatorname{MAX}(\Psi)=\langle \widehat{\Pi}(\Psi) U \Psi,V \rangle$.
    \ENDFOR
    \STATE Find $\Psi^*=\arg\max \{ \operatorname{MAX}(\Psi): \Psi \in \mathcal{S}_d \}$.
    \STATE \textbf{Output}: $\widehat{\Pi}_{\operatorname{Umeyama}}= \widehat{\Pi}(\Psi^*)$. 
    \end{algorithmic}
\end{breakablealgorithm}
This matching algorithm indeed runs in polynomial time due to the fact that we are under the low-dimensional regime $d=O(\log n)$ which implies $\operatorname{card}(\mathcal{S}_d)=2^{d}=n^{O(1)}$ and the fact that for each fixed $\Psi$, the procedure of maximizing over $\Pi\in\mathfrak S_n$ can be reduced to a {\em linear assignment problem}, which can be solved in time $O(n^3)$ by a linear program (LP) over doubly stochastic matrices or by the Hungarian algorithm \cite{Kuhn55}. Thus the total running time of this algorithm is $O(2^d n^{3})$. Our main results are stated as follows:

\begin{thm}[Almost exact recovery]{\label{thm-almost-exact-recovery}}
Assume $5\leq d = O(\log n)$ and $\sigma = o(d^{-3} n^{-1/d})$ and consider the correlated dot-product model $(A, B)$.
By taking $(A,B)$ as the input in the Umeyama algorithm and obtaining $\widehat{\Pi}_{\operatorname{Umeyama}}$ as the output, we have with probability $1-o(1)$ that
\begin{equation}\label{eq-bound-epsilon-for-partial-recovery}
    d_{\operatorname{H}}( \widehat{\Pi}_{\operatorname{Umeyama}}, \Pi^*) \leq \frac{50n}{ d \log [( \sigma^{-1}d^{-3} n^{-1/d} \wedge \log n ) ] }\,,
\end{equation}
  where $d_{\operatorname{H}}(\widehat{\Pi},\Pi^*)=\sum_{i, j} \mathbf 1_{\widehat \Pi_{ij} \neq \Pi^*_{ij}}$ is the Hamming distance between $\widehat{\Pi}$ and $\Pi^*$. 
\end{thm}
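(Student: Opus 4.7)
The plan is to proceed in three stages: noise control, eigenvector perturbation, and assignment analysis.

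First (noise control), expanding $B - \Pi^* A \Pi^{*\top} = \sigma \Pi^* X Z^\top + \sigma Z X^\top \Pi^{*\top} + \sigma^2 Z Z^\top$, the dominant summand $\sigma Z X^\top$ has the same non-zero singular values as the $d \times d$ matrix $\sigma Z^\top X$, whose entries are (conditionally on $X$) centered Gaussians of variance $\asymp n$. Standard Gaussian concentration yields $\|B - \Pi^* A \Pi^{*\top}\|_{\mathrm{op}} \lesssim \sigma \sqrt{nd}$ with probability $1-o(1)$. Second (eigenvector perturbation), the non-zero eigenvalues of $A$ coincide with those of the Wishart matrix $X^\top X$, which concentrate in $((\sqrt{n}-\sqrt{d})^2, (\sqrt{n}+\sqrt{d})^2)$ with minimum consecutive gap $\gtrsim \sqrt{n}/\operatorname{poly}(d)$ by classical Wishart rigidity estimates. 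Combining with the bound of Stage 1 and Davis--Kahan (applied eigenvector by eigenvector), one obtains a sign pattern $\Psi^\star \in \mathcal{S}_d$ such that
\[
\epsilon \;:=\; \|V - \Pi^* U \Psi^\star\|_F \;\lesssim\; \sigma \cdot \operatorname{poly}(d),
\]
which under $\sigma = o(d^{-3} n^{-1/d})$ is $o(n^{-1/d})$ up to polynomial-in-$d$ factors.

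Third (assignment analysis), let $(\widehat{\Pi}, \widehat{\Psi})$ be the Umeyama maximizers. By optimality and Stage 2,
\[
\langle \widehat{\Pi} U \widehat{\Psi}, V \rangle \;\geq\; \langle \Pi^* U \Psi^\star, V \rangle \;\geq\; d - \sqrt{d}\,\epsilon,
\]
and since $\|\widehat{\Pi} U \widehat{\Psi}\|_F^2 = \|U\|_F^2 = d$, a short algebraic manipulation recasts this as the margin estimate
\[
\tfrac{1}{2}\bigl\|U - (\Pi^{*\top}\widehat{\Pi})\,U\,(\Psi^\star\widehat{\Psi})\bigr\|_F^2 \;\lesssim\; \sqrt{d}\,\epsilon.
\]
A Cauchy--Schwarz argument (using that the columns of $U$ are unit vectors with generic sign structure) rules out $\Psi^\star\widehat{\Psi} \neq I$, leaving $\sum_i \|U_i - U_{q(i)}\|^2 \lesssim \sqrt{d}\,\epsilon$ with $q = (\pi^*)^{-1}\circ\widehat{\pi}$. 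Since $U = X(X^\top X)^{-1/2} R$ for some $d \times d$ orthogonal $R$, we have $\|U_i - U_j\|^2 \asymp \|X_i - X_j\|^2/n$, so the number of $i$ with $q(i) \neq i$ is controlled by the number of geometrically close pairs in the Gaussian point cloud $\{X_i\}_{i \in [n]} \subset \mathbb{R}^d$. A dyadic stratification over distance scales then produces the bound \eqref{eq-bound-epsilon-for-partial-recovery}, the logarithmic denominator coming from summing over the scales at which mismatches are permitted by the error budget $\sqrt{d}\,\epsilon$.

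The principal obstacle is Stage 3. The optimum $(\widehat{\Pi},\widehat{\Psi})$ is a joint combinatorial one, not the result of vertex-wise decisions, so mismatches form arbitrary permutation structures and cannot be analyzed by single-swap comparisons alone. Moreover, the pairwise-distance geometry of $\{X_i\}$ and the perturbation $E$ are coupled through $X$, so one must condition on $X$ when bounding $\epsilon$ and then invoke geometric tail bounds on the Gaussian cloud. Finally, although $|\mathcal{S}_d| = 2^d$ is only polynomial in $n$, the sign choice must be controlled uniformly over $\mathfrak{S}_n \times \mathcal{S}_d$ rather than by a naive union bound, since a wrong sign pattern combined with a mismatched permutation could otherwise be competitive with $(\Pi^*,\Psi^\star)$; this requires tracking the Cauchy--Schwarz slack $1 - |U_{\cdot,k}^\top W U_{\cdot,k}|$ across all permutations $W = \Pi^{*\top}\Pi$ and extracting enough margin from the geometric spread of the rows of $U$.
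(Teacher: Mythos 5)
Your overall skeleton matches the paper's: first show that the left singular vector matrices $U$ and $V$ are $\operatorname{poly}(d)\sigma$-close in Frobenius norm after a sign correction $\Psi^\star\in\mathcal S_d$, then run a margin argument to exclude permutations with many mismatches. (A minor arithmetic slip: since $\|U\Psi^\star\|_F^2=\|V\|_F^2=d$, the lower bound on the overlap should be $\langle U\Psi^\star,V\rangle = d-\tfrac12\epsilon^2$, not $d-\sqrt d\,\epsilon$; this is harmless but worth fixing.) The paper's route to the sign correction is slightly different: it applies Davis--Kahan to the $d\times d$ Gram matrices $X^\top X, Y^\top Y$ (giving closeness of $Q$ and $R$), and only then transfers to $U,V$ via the SVD. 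Your appeal to ``classical Wishart rigidity'' for the minimum eigenvalue gap is a real gap: when $d$ grows with $n$, the needed bound $\delta(X^\top X)\gtrsim \sqrt n/\operatorname{poly}(d)$ is not a textbook Wishart estimate. The paper establishes it by proving a Wasserstein-distance bound from $X^\top X - n\operatorname{I}_d$ to $\sqrt n$ times a $d\times d$ GOE matrix (via Stein/Malliavin calculus) and then invoking the GOE smallest-gap distribution of Feng--Tian--Wei. Your sketch does not supply a substitute for this.

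The more serious gap is your Stage 3. Once you have $\|U - W U\Psi'\|_F\lesssim\epsilon$ with $W=\Pi^{*\top}\widehat\Pi$, converting this into a bound on $d_H(W,\operatorname I_n)$ requires a \emph{uniform} lower bound on $\|U-\Pi U\Psi\|_F$ over all $\Pi\in\mathfrak S_{n,\epsilon}$ and all $\Psi\in\mathcal S_d$, i.e.\ the paper's event $\mathcal E_3$. You correctly flag this as the principal obstacle, but the ``dyadic stratification over distance scales'' argument you describe does not resolve it: the quantity $\sum_i\|X_i-X_{q(i)}\|^2$ depends on the whole cycle structure of $q$, and the union bound over $n!$ permutations is exactly where the difficulty lies. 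A naive pair-counting argument based on the geometric proximity structure of $\{X_i\}$ does not close this union bound, because an adversarial permutation can chain short cycles through mutually close points and its cost factorizes over cycles rather than pairs. The paper handles this by factoring the relevant moment generating function $\mathbb E[\exp(-\|X-\Pi X\Psi\|_F^2/(32\sigma_0^2))]$ over the cycle decomposition of $\Pi$ (using Lemma 4 and Equation (49) of \cite{WWXY22+}), which yields a per-permutation bound of order $(4\sigma_0)^{d(m-\mathfrak c)}$ in terms of the number $m$ of non-fixed points and cycles $\mathfrak c$, tight enough to survive the $n^m$ enumeration; additionally, to apply this to $U$ rather than $X$, the paper first couples $U$ to $\tfrac1{\sqrt n}\mathsf Z$ via a Gram--Schmidt comparison (Lemma~\ref{lem-proximity-U-Z}). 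Without an argument of this type, your Stage 3 does not establish \eqref{eq-bound-epsilon-for-partial-recovery}.
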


\begin{thm}[Exact recovery]\label{thm-exact-recovery}
Assume $5\leq d = O(\log n)$ and $\sigma = o(d^{-3} n^{-2/d})$ and consider the correlated dot-product model $(A, B)$.
By taking $(A,B)$ as the input in the Umeyama algorithm and obtaining $\widehat{\Pi}_{\operatorname{Umeyama}}$ as the output, we have $\widehat{\Pi}_{ \operatorname{Umeyama}} = \Pi^*$ with probability $1-o(1)$. 
\end{thm}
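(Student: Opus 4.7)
\medskip

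\noindent\textbf{Proof plan.} Without loss of generality assume $\Pi^* = I$, so that $B = A + E$ with $E = \sigma(XZ^\top + ZX^\top) + \sigma^2 ZZ^\top$. For any fixed $\Pi$, the inner maximization over $\Psi \in \mathcal{S}_d$ in \eqref{eq-def-Umeyama-estimator} selects the sign of each diagonal entry of $U^\top \Pi^\top V$, so the Umeyama objective can be rewritten as
\begin{equation*}
  f(\Pi) \;:=\; \max_{\Psi \in \mathcal{S}_d}\, \langle \Pi U \Psi, V \rangle \;=\; \sum_{j=1}^d \bigl|\langle u_j,\, \Pi^\top v_j\rangle\bigr| \,.
\end{equation*}
Exact recovery is therefore equivalent to showing $f(I) > f(\Pi)$ for every $\Pi \neq I$ with probability $1-o(1)$.

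The first step is spectral control. Classical Wishart concentration gives $\lambda_d(A) \geq n/2$ with high probability, and Gaussian tail bounds give $\|E\|_{\operatorname{op}} = O(\sigma n)$ up to polylogarithmic factors; the subspace Davis--Kahan $\sin\Theta$ theorem applied to the top-$d$ invariant subspaces (whose complementary eigenvalues vanish since $A$ and $B$ have rank $d$) then yields $\|UU^\top - VV^\top\|_F = O(\sigma \sqrt d)$. Using eigenvalue rigidity for Wishart matrices to control the individual spectral gaps of $A$, this can be promoted to a column-wise bound: there is a sign pattern $\Psi^* \in \mathcal{S}_d$ such that $R := V\Psi^* - U$ satisfies $\|R\|_F \leq \eta$ with an explicit $\eta$ that the hypothesis $\sigma = o(d^{-3}n^{-2/d})$ renders much smaller than $n^{-1/2-2/d}$. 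Since $v_j$ and $\epsilon_j u_j$ are unit vectors, $\langle u_j, v_j\rangle = \epsilon_j(1 - \tfrac12 \|w_j\|^2)$ for $w_j$ the $j$-th column of $R$, which sums to the clean identity $f(I) = d - \tfrac12 \|R\|_F^2$.

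For $\Pi \neq I$, writing $v_j = \epsilon_j u_j + w_j$ and applying Cauchy--Schwarz gives the upper bound
\begin{equation*}
  f(\Pi) \;\leq\; \sum_{j=1}^d \bigl|u_j^\top \Pi^\top u_j\bigr| + \sqrt d\,\|R\|_F\,,
\end{equation*}
so the exact-recovery claim reduces to a quantitative lower bound on $d - \sum_j |u_j^\top \Pi^\top u_j|$ uniformly over $\Pi \neq I$. When every diagonal $u_j^\top \Pi^\top u_j$ is non-negative, one has the identity
\begin{equation*}
  d - \sum_{j=1}^d \bigl|u_j^\top \Pi^\top u_j\bigr| \;=\; \tfrac12 \|(I-\Pi)U\|_F^2 \;=\; \tfrac12 \sum_{i:\,\pi(i)\neq i}\|U_{i,\cdot} - U_{\pi(i),\cdot}\|^2 \,,
\end{equation*}
and since $U = X(X^\top X)^{-1/2}$ with $X^\top X$ concentrated near $nI_d$, the right-hand side is comparable to $\tfrac{1}{n}\sum_{i \in S(\pi)}\|X_i - X_{\pi(i)}\|^2$. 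The extremal case is a transposition of the two closest points, so the classical minimum-separation estimate $\min_{i\neq j}\|X_i - X_j\| \gtrsim n^{-2/d}/\operatorname{polylog}(n)$ for $n$ i.i.d.\ Gaussian points in $\mathbb{R}^d$ yields a margin of order $n^{-1-4/d}/\operatorname{polylog}$, which under $\sigma = o(d^{-3}n^{-2/d})$ strictly dominates the noise term $\tfrac12\|R\|_F^2 + \sqrt d\,\|R\|_F$.

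The principal technical obstacle is the \emph{absolute-value} issue: the signed-sum identity above requires $u_j^\top \Pi^\top u_j \geq 0$ for every $j$, whereas a priori some diagonals could be negative and enlarge $\sum_j |u_j^\top \Pi^\top u_j|$ beyond $\sum_j u_j^\top \Pi^\top u_j$. To rule this out I will use that $u_j \approx X_{\cdot,j}/\sqrt n$ has approximately i.i.d.\ Gaussian entries, so $u_j^\top \Pi^\top u_j = \sum_i (u_j)_i (u_j)_{\pi(i)}$ concentrates near $1 - |S(\pi)|/n$, which is positive for every $\Pi$ with $|S(\pi)| \leq n/2$; the complementary regime $|S(\pi)| > n/2$ is dispatched trivially because then $\tfrac12\|(I-\Pi)U\|_F^2$ is already of order $d$ and dominates the noise. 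For the union bound over $\Pi \neq I$, I stratify by $k := |S(\pi)|$: the hardest stratum is $k = 2$ (transpositions of the closest pair), handled by the minimum-separation bound, while for $k \geq 3$ the margin grows at least linearly in $k$, comfortably absorbing the $\binom{n}{k}k!$ combinatorial entropy.
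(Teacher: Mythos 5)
Your high-level decomposition is attractive and several of its ingredients are sound: the reduction to $f(\Pi) := \max_{\Psi}\langle \Pi U\Psi, V\rangle = \sum_j |\langle u_j, \Pi^\top v_j\rangle|$, the identity $f(I) = d - \tfrac12\|R\|_F^2$, the parallelogram identity $d - \sum_j u_j^\top\Pi^\top u_j = \tfrac12\|(I-\Pi)U\|_F^2$, the passage to $\tfrac1n\sum_{i\in S(\pi)}\|X_i - X_{\pi(i)}\|^2$, the minimum-separation heuristic, and the concentration argument for ruling out negative diagonals. However, the proof cannot close as written because the Cauchy--Schwarz bound $f(\Pi) \leq \sum_j|u_j^\top\Pi^\top u_j| + \sqrt d\,\|R\|_F$ is catastrophically lossy for $\Pi$ close to the identity.

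Take $\Pi$ a transposition of the two closest $X$-points. Your signal margin is $\tfrac12\|(I-\Pi)U\|_F^2 \asymp \tfrac1n\min_{i<j}\|X_i-X_j\|^2 \asymp n^{-1-4/d}$. Your noise bound is $\sqrt d\,\|R\|_F$. Now, the correct size of $\|R\|_F=\|V\Psi^*-U\|_F$ is of order $d^3\widehat\sigma\asymp n^{-2/d}/\widehat\omega_n$, not ``much smaller than $n^{-1/2-2/d}$'' as you claim: the relevant perturbation analysis has to go through the $d\times d$ Gram matrices $X^\top X$, $Y^\top Y$ (as in Proposition~\ref{prop-diff-orthogonal-matrix}), because the naive Davis--Kahan applied to the $n\times n$ matrices $A,B$ has $\|B-A\|_{\operatorname{op}}\asymp\sigma n$ over an eigenvalue gap of $\asymp\sqrt n/d$ and gives the far weaker column-wise bound $\sigma\sqrt n\,d$, and even the correct passage through the Gram matrices does not produce anything smaller than $\operatorname{poly}(d)\cdot\sigma\asymp n^{-2/d}$. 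But even granting your (erroneous) claim $\|R\|_F\ll n^{-1/2-2/d}$, the comparison $n^{-1-4/d}\gg\sqrt d\,n^{-1/2-2/d}$ would require $n^{-1/2-2/d}\gg\sqrt d$, which is false; with the correct magnitude $\|R\|_F\asymp n^{-2/d}$ you are off by a full factor of $n^{1+2/d}$. So the margin at $k=2$, which you yourself identify as the hardest stratum, does \emph{not} beat the noise. The issue is structural, not a constant: $|u_j^\top\Pi^\top w_j|\le\|w_j\|$ discards the fact that $(\Pi-I)u_j$ is supported on the small set $\mathsf N(\Pi)$.

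The fix is to localize the cross term to $\mathsf N(\Pi)$: one has $|\langle(\Pi-I)u_j, w_j\rangle|\leq\|(\Pi-I)u_j\|\cdot\|w_j^{\mathsf N(\Pi)}\|$, which after summing gives a bound $\|(I-\Pi)U\|_F\cdot\|R^{\mathsf N(\Pi)}\|_F$ that \emph{is} of the right order for transpositions (one power of $\|(I-\Pi)U\|_F$ on each side). But then you must prove a \emph{uniform} bound on $\|R^{\mathsf N(\Pi)}\|_F$ over all small sets $\mathsf N(\Pi)$, and $R$ is a complicated deterministic function of $X,Z$ whose Frobenius mass could in principle concentrate on a few rows. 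This is precisely the obstacle the paper circumvents by reformulating the whole comparison in terms of the raw matrices $X,Y,Z$ via the identities $U=XQ\Lambda^{-1}$, $V=YR\Sigma^{-1}$ and the operator bound $\|\mathrm I_d - nQ\Lambda^{-1}\Psi_0\Sigma^{-1}R^\top\|_{\operatorname{op}}\lesssim d^3\widehat\sigma$ of \eqref{eq-def-good-event-0-IV}; the localized quantities that then appear, $\|X^{\mathsf N(\Pi)}\|_F$, $\|Y^{\mathsf N(\Pi)}\|_F$, $\|Z^{\mathsf N(\Pi)}\|_F$, are row-restrictions of i.i.d.\ Gaussian arrays, and their uniform control (event $\mathcal E_5$) follows from a chi-square computation and union bound (Lemma~\ref{lem-typical-E5}). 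Your proposal needs an ingredient at this level to survive the $k=2$ stratum; as written it does not.
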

\begin{remark}
    Actually the proofs of Theorems~\ref{thm-almost-exact-recovery} and \ref{thm-exact-recovery} remain unchanged even when $d\leq n^{\delta}$ for some small constant $\delta$, though in this case the Umeyama algorithm does not run in polynomial time.
    In addition, we point out that the assumption $d \geq 5$ in Theorems~\ref{thm-almost-exact-recovery} and \ref{thm-exact-recovery} is posed for technical reasons. We also point out that when $d$ is a constant, one may modify this algorithm to output a refined estimator $\Tilde{\Pi}$ defined as
    $$ \Tilde{\Pi} = \arg \max_{ \Pi \in \mathfrak{S}_n } \max_{ \Psi \in N(d) } \Big\langle \Pi U \Psi, V \Big\rangle \,, $$
    where $N(d)$ is a $\epsilon$-net of the set of all $d\!*\!d$ orthogonal matrices with $\epsilon = n^{-O(1)}$. This algorithm still has polynomial running time and its success is guaranteed by \cite{WWXY22+}. However, this algorithm requires running time $n^{O(d)}$, which is significantly less efficient than the Umeyama algorithm and becomes pseudo-polynomial when $d=\omega(1)$. 
\end{remark}
\begin{remark}
    It is worth noting that our results combined with \cite{WWXY22+} imply that there is no information-computation gap when $d=O(1)$. It remains an intriguing question to see if this phenomenon occurs when $d=\omega(1)$.
\end{remark}

We now consider the distance model. It turns out that it will be more convenient to consider $(\widehat{A} \circ \widehat{A})_{i,j}=\|X_i-X_j\|^2_2$ and $(\widehat{B} \circ \widehat{B})_{i,j}=\|Y_i-Y_j\|_2^2$, where $\circ$ is the Hadamard product. 
For ease of analysis, we let $\mathbf F=\frac{1}{n}\mathbf{1}\mathbf{1}^\top$ and consider 
\begin{equation}\label{eq-goal-distance-model}
    \widetilde A=-\frac{1}{2}(\operatorname{I}_n-\mathbf F)(\widehat{A} \circ \widehat{A})(\operatorname{I}_n-\mathbf F) \mbox{ and }
    \widetilde B=-\frac{1}{2}(\operatorname{I}_n-\mathbf F)(\widehat{B} \circ \widehat{B})(\operatorname{I}_n-\mathbf F) \,.
\end{equation}
We apply Algorithm~\ref{Umeyama Algorithm} to $\widetilde A$ and $\widetilde B$. Our results in this case are as follows: 

\begin{thm}[Almost exact recovery of distance model]{\label{thm-almost-exact-recovery-distance}}
Assume $5\leq d = O(\log n)$ and $\sigma = o(d^{-3} n^{-1/d})$ and consider the correlated distance model $(A, B)$.
By taking $(\widetilde{A},\widetilde{B})$ as the input in the Umeyama algorithm and obtaining $\widehat{\Pi}_{\operatorname{Umeyama}}$ as the output, we have with probability $1-o(1)$ that
\begin{equation}{\label{eq-bound-epsilon-for-partial-recovery-distance}}
    d_{\operatorname{H}}( \widehat{\Pi}_{\operatorname{Umeyama}}, \Pi^*) \leq \frac{50n}{ d \log (d^{-3}n^{-1/d} \sigma^{-1} \wedge\log n) } \,.
\end{equation}

\end{thm}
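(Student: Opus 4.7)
My plan is to reduce Theorem~\ref{thm-almost-exact-recovery-distance} to Theorem~\ref{thm-almost-exact-recovery} by recognizing $(\widetilde A,\widetilde B)$ as the pair of Wishart matrices of the centered data. First, an algebraic simplification: with $a_i:=\|X_i\|_2^2$, the identity $\widehat A\circ\widehat A = a\mathbf{1}^\top+\mathbf{1}a^\top-2XX^\top$ together with $H\mathbf{1}=0$ (for $H:=I_n-\mathbf F$) annihilates the rank-$2$ piece and yields
\begin{equation*}
\widetilde A = HXX^\top H = (HX)(HX)^\top,\qquad \widetilde B=(HY)(HY)^\top.
\end{equation*}
Because $\Pi^*\mathbf{1}=\mathbf{1}$, $H$ commutes with $\Pi^*$, so $HY=\Pi^*HX+\sigma HZ$ and $(\widetilde A,\widetilde B)$ is exactly the dot-product observation of the centered data under the same latent $\Pi^*$.

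Next, I would quantify the effect of centering. Writing $\bar x:=\tfrac1n X^\top\mathbf{1}\sim N(0,\tfrac1n I_d)$, Gaussian concentration gives $\|\bar x\|_2=O(\sqrt{d/n})$ with probability $1-o(1)$, and the identity
\begin{equation*}
A-\widetilde A = X\bar x\mathbf{1}^\top + \mathbf{1}\bar x^\top X^\top - \|\bar x\|_2^2\mathbf{1}\mathbf{1}^\top
\end{equation*}
has rank at most $3$ and operator norm $O(\sqrt{nd})$, with the analogous bound for $B-\widetilde B$. Since the nonzero eigenvalues of $A$ are all of order $n$ while $\lambda_{d+1}(A)=0$, the Davis-Kahan theorem gives that the principal angles between the top-$d$ eigenspaces of $\widetilde A$ and $A$ (and of $\widetilde B$ and $B$) are $O(\sqrt{d/n})$, which is negligible at every step of the Umeyama analysis.

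Finally, I would run the proof of Theorem~\ref{thm-almost-exact-recovery} on $(\widetilde A,\widetilde B)$ directly. The cleanest bookkeeping is to pick any orthonormal basis $Q\in\mathbb R^{n\times(n-1)}$ of $\mathbf{1}^\perp$ and write $HX=Q\bar X$, $HZ=Q\bar Z$, where $\bar X,\bar Z$ are genuine $(n-1)\times d$ matrices of iid $N(0,1)$ entries. Then every spectral quantity of $\widetilde A$ is read off the Wishart matrix $\bar X\bar X^\top$ with truly independent rows, and the concentration inputs needed in the Theorem~\ref{thm-almost-exact-recovery} argument (eigenvalue gaps, inner products of noise rows with eigenvectors, and the Umeyama margin $\langle\Pi^*U\Psi,V\rangle - \max_{\Pi\ne\Pi^*}\langle\Pi U\Psi,V\rangle$) are imported verbatim from this iid picture. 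The main obstacle is that in this basis $\Pi^*$ appears as the generic orthogonal matrix $R:=Q^\top\Pi^*Q$ on $\mathbb R^{n-1}$ rather than as a permutation, so the comparison of the Umeyama objective at $\Pi^*$ against alternative $\Pi\in\mathfrak S_n$ must still be performed in the original $n$-dimensional coordinates; because the replacement $X\mapsto HX$ is only a rank-$1$ perturbation, this contributes additional error terms of order at most $1/\sqrt n$ to every bound, which are absorbed by the slack between $\sigma=o(d^{-3}n^{-1/d})$ and the Hamming-distance conclusion \eqref{eq-bound-epsilon-for-partial-recovery-distance}. Modulo this routine robustness check, no new idea beyond Theorem~\ref{thm-almost-exact-recovery} is required.
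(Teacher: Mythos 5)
Your reduction has the right skeleton --- you correctly identify $\widetilde A=(HX)(HX)^\top$, $\widetilde B=(HY)(HY)^\top$ with $H=I_n-\mathbf F$, observe that $H$ commutes with $\Pi^*$, and aim to quantify the effect of centering and show it is absorbed. This is indeed what the paper does. But the specific comparison argument you give for the centered versus uncentered eigenvectors does not work, and it is the load-bearing step.

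The issue is that your Davis--Kahan claim is a \emph{subspace} statement, whereas the Umeyama objective and the event $\mathcal E_3$ (that $\|\Pi U - U\Psi\|_{\operatorname F}\geq 8.5 d^3\widehat\sigma$ for all $\Pi\in\mathfrak S_{n,\epsilon}$ and $\Psi\in\mathcal S_d$) depend on the ordered, \emph{individual} columns $u_1,\ldots,u_d$ and $v_1,\ldots,v_d$. Your bound $\|A-\widetilde A\|_{\operatorname{op}}=O(\sqrt{nd})$ compared against the top-$d$-to-$0$ eigenvalue gap $\approx n$ gives subspace angle $O(\sqrt{d/n})$, fine; but the eigenvalue gaps \emph{inside} the top-$d$ block of $A$ are only $\asymp \sqrt n/(d\widehat\omega_n^{0.1})$ (this is exactly \eqref{eq-goal-spectral-gap}), so column-wise Davis--Kahan applied to $A$ versus $\widetilde A$ gives $\sin\angle(u_i,\widetilde u_i)\lesssim \sqrt{nd}\cdot d\widehat\omega_n^{0.1}/\sqrt n = d^{3/2}\widehat\omega_n^{0.1}$, which is not even $o(1)$. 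In particular the Weyl perturbation $O(\sqrt{nd})$ exceeds the internal gaps, so the ordering of the top-$d$ eigenvectors of $\widetilde A$ could in principle be shuffled relative to $A$, and your claim that the centering error ``contributes additional error terms of order at most $1/\sqrt n$'' is unjustified as stated.

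The paper sidesteps this by never comparing the $n\times n$ matrices $A$ and $\widetilde A$ directly. Instead it compares the $d\times d$ Gram matrices $\mathbf X=X^\top X-n I_d$ and $\widetilde{\mathbf X}=\widetilde X^\top\widetilde X-n I_d$, for which the perturbation is the rank-one matrix $X^\top\mathbf F X$ of operator norm $\lesssim d\widehat\omega_n^{0.1}$ (see \eqref{eq-diff-X-Xtilde}). This is now tiny against the same eigen-gap $\sqrt n/(2d\widehat\omega_n^{0.1})$, so column-wise Davis--Kahan yields $\|\widetilde Q - Q\Psi_1\|_{\operatorname F}\leq n^{-0.49}$ for some $\Psi_1\in\mathcal S_d$ (equation \eqref{eq-diff-QQtilde}), and from that the paper derives $\|\widetilde U - U\Psi_1\|_{\operatorname{op}}\leq n^{-0.49}$ (Lemma~\ref{lem-diff-UUtilde}). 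That operator-norm bound on the left singular vectors, combined with the dot-product event $\mathcal E_3$ via the triangle inequality in Lemma~\ref{lem-good-events-E2-distance-model}, is precisely what you need to import $\mathcal E_3$ to the distance model; it is the missing piece. Your alternative bookkeeping with $HX=Q\bar X$ and genuinely i.i.d.\ $\bar X$ is a valid way to import the self-spectral facts (singular values, eigen-gaps), but as you yourself note, $\Pi$ is no longer a permutation in that basis, so this trick does not by itself produce the permutation-structured comparison in $\widetilde{\mathcal E}_3$; you still need a Lemma~\ref{lem-diff-UUtilde}-type statement tying $\widetilde U$ back to $U$ in the original $n$ coordinates, and the $d\times d$ route is how to get it.
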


\begin{thm}[Exact recovery of distance model]\label{thm-exact-recovery-distance}
Assume $5\leq d = O(\log n)$ and $\sigma = o(d^{-3} n^{-2/d})$ and consider the correlated distance model $(A, B)$. By taking $(\widetilde{A},\widetilde{B})$ as the input in the Umeyama algorithm and obtaining $\widehat{\Pi}_{\operatorname{Umeyama}}$ as the output, we have $\widehat{\Pi}_{ \operatorname{Umeyama}} = \Pi^*$ with probability $1-o(1)$. 
\end{thm}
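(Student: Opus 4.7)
The plan is to reduce the correlated distance model to the correlated dot-product model and invoke Theorem~\ref{thm-exact-recovery}. The key algebraic identity is: since $(\widehat A \circ \widehat A)_{i,j} = \|X_i\|_2^2 + \|X_j\|_2^2 - 2 \langle X_i, X_j \rangle$, and since the first two terms contribute a matrix of the form $\mathbf{1} a^\top + a \mathbf{1}^\top$ that vanishes upon conjugation by $\operatorname{I}_n - \mathbf F$ (because $(\operatorname{I}_n - \mathbf F) \mathbf 1 = 0$), we have
\begin{equation*}
\widetilde A = (\operatorname{I}_n - \mathbf F)\, X X^\top\, (\operatorname{I}_n - \mathbf F) = \overline X\, \overline X^\top, \qquad \widetilde B = \overline Y\, \overline Y^\top,
\end{equation*}
where $\overline X := (\operatorname{I}_n - \mathbf F) X$ and $\overline Y := (\operatorname{I}_n - \mathbf F) Y$ are the row-centered data matrices. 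Because $\Pi^* \mathbf 1 = \mathbf 1$ yields $(\operatorname{I}_n - \mathbf F) \Pi^* = \Pi^* (\operatorname{I}_n - \mathbf F)$, we furthermore have $\overline Y = \Pi^* \overline X + \sigma \overline Z$ with $\overline Z := (\operatorname{I}_n - \mathbf F) Z$. Thus $(\widetilde A, \widetilde B)$ is itself a correlated dot-product model, only with centered Gaussian data in place of i.i.d.\ Gaussian rows.

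With this reduction in place, I would re-run the proof of Theorem~\ref{thm-exact-recovery} with $(X, Y, Z)$ everywhere replaced by $(\overline X, \overline Y, \overline Z)$. The only distributional change is that the rows of the centered matrices are no longer independent; they have joint covariance $(\operatorname{I}_n - \mathbf F) \otimes \operatorname{I}_d$ rather than $\operatorname{I}_n \otimes \operatorname{I}_d$. To make this precise I would write $\operatorname{I}_n - \mathbf F = Q Q^\top$, where $Q \in \mathbb{R}^{n \times (n-1)}$ has orthonormal columns spanning $\mathbf 1^\perp$, and set $X' := Q^\top X$, $Z' := Q^\top Z$, so that $\overline X = Q X'$, $\overline Z = Q Z'$ with $X', Z' \in \mathbb{R}^{(n-1) \times d}$ having i.i.d.\ standard Gaussian entries by orthogonal invariance. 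The nonzero eigenvalues of $\widetilde A$ coincide with those of $X'(X')^\top$, and its eigenvectors are the $Q$-images of the eigenvectors of $X'(X')^\top$. Every Gaussian-moment or concentration estimate behind Theorem~\ref{thm-exact-recovery} (spectrum of $\overline X^\top \overline X$ around $(n-1)\operatorname{I}_d$, tail bounds for cross inner products, control of signal-vs-noise interaction terms arising from $\overline Z$) then has an exact analog with $n$ replaced by $n-1$, and the hypothesis $\sigma = o(d^{-3} n^{-2/d})$ absorbs the $O(1/n)$ relative corrections with room to spare.

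The main obstacle is bookkeeping rather than a new idea: one must check that every quantitative step in the proof of Theorem~\ref{thm-exact-recovery}, in particular the positive lower bound on the margin between the Umeyama objective at $\Pi^*$ and at any competing $\Pi$, retains sufficient slack after this swap. Since the top $d$ eigenvalues of $\widetilde A$ are still of order $n$ and the noise entering $\widetilde B$ is of the same magnitude as in the dot-product model, the verification is essentially routine, modulo some care in tracking how the rank-$1$ perturbation introduced by centering affects the individual eigenvectors appearing in the Umeyama objective (the rank-$d$ projector itself being robust by Davis--Kahan). Once this is confirmed, one concludes that with probability $1-o(1)$ the maximizer of $\max_{\Psi \in \mathcal S_d}\langle \Pi U \Psi, V\rangle$ over $\Pi \in \mathfrak S_n$ is attained uniquely at $\Pi = \Pi^*$, giving $\widehat \Pi_{\operatorname{Umeyama}} = \Pi^*$ as required.
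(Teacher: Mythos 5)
Your reduction $\widetilde A = \overline X\,\overline X^\top$, $\widetilde B = \overline Y\,\overline Y^\top$ with $\overline Y = \Pi^*\overline X + \sigma\overline Z$ is correct and is exactly the paper's point of departure. The gap is the claim that re-running the proof of Theorem~\ref{thm-exact-recovery} with $(\overline X,\overline Y,\overline Z)$ is ``essentially routine'' bookkeeping absorbed by ``$n\mapsto n-1$.'' First, the rotation to $X' = Q^\top X \in \mathbb R^{(n-1)\times d}$ does not produce an $(n-1)$-dimensional Umeyama instance: while the eigenvectors of $\widetilde A$ are $Q$-images of those of $X'(X')^\top$, the objective $\langle\Pi\widetilde U\Psi,\widetilde V\rangle$ becomes $\langle (Q^\top\Pi Q)U'\Psi, V'\rangle$, and $Q^\top\Pi Q$ is an orthogonal $(n-1)\!\times\!(n-1)$ matrix but \emph{not} a permutation, so the permutation structure of the optimization does not descend. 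Second, and more seriously, the tail estimate behind $\mathcal E_5$ (Lemma~\ref{lem-typical-E5}) decomposes $\|(\operatorname{I}_n-\Pi)X\|_{\operatorname F}^2 - \alpha_n^2\|X^{\mathsf N(\Pi)}\|_{\operatorname F}^2$ over the cycles of $\pi$ and uses row-independence of $X$ to obtain a product of independent MGFs; for $\overline X$ (row covariance $\operatorname{I}_n-\mathbf F$) the rows are correlated and that decomposition no longer yields independence, so the bound cannot be rerun verbatim.

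The ingredient your proposal is missing is the identity $(\operatorname{I}_n-\Pi)\mathbf F=0$, hence $(\operatorname{I}_n-\Pi)\overline X=(\operatorname{I}_n-\Pi)X$ (cf.\ \eqref{eq-InminusPi-F}). The paper's proof of Theorem~\ref{thm-exact-recovery-distance} is a \emph{comparison} argument built on this identity rather than a re-derivation for centered data: it transfers $\widetilde{\mathcal E}_5$ to $\mathcal E_5$ because the left-hand side $\|(\operatorname{I}_n-\Pi)\overline X\|_{\operatorname F}$ is literally unchanged by centering and $\|\overline X^{\mathsf N(\Pi)}\|_{\operatorname F}$ differs from $\|X^{\mathsf N(\Pi)}\|_{\operatorname F}$ by at most $\|\mathbf F X\|_{\operatorname F}$ (see \eqref{eq-InminusPi-tildeX}). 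Likewise, for $\widetilde{\mathcal E}_4$ the paper does not re-derive eigenvector concentration on $\mathbf 1^\perp\cap S^{n-1}$: it controls $\|\widetilde U-U\Psi_1\|_{\op}$ via Davis--Kahan (Lemma~\ref{lem-diff-UUtilde}) and then falls back on the dot-product estimate \eqref{eq-goal-E3}. Your Davis--Kahan remark shows the right instinct for the eigenvector part, but without the $(\operatorname{I}_n-\Pi)\mathbf F = 0$ observation the proposed ``swap $X\mapsto\overline X$'' does not actually close the argument for $\mathcal E_5$.
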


\subsection{Backgrounds and related works}

Graph matching (or graph alignment) refers to finding the vertex correspondence between two graphs such that the total number of common edges is maximized. It plays essential roles in various applied fields such as computational biology \cite{SXB08, VCL+15}, social networking \cite{NS08, NS09}, computer vision \cite{BBM05, CSS06} and natural language processing \cite{HNM05}. This problem is an instance of {\em Quadratic assignment problem} which is known to be NP-hard in the worst case, making itself an important and challenging combinatorial optimization problem. From the theoretical point of view, perhaps the most widely studied model is the {\em correlated \ER graph model} \cite{PG11}, where the observations are two \ER graphs with correlated pairs of edges through an underlying vertex bijection $\pi^*$. In recent years, many results have been obtained concerning this model with emphasis placed on the two important and entangling issues\textemdash the information threshold (i.e., the statistical threshold) and the computational phase transition. On one hand, the collaborative endeavors of the community, as evidenced in \cite{CK16, CK17, HM23, GML21, WXY22, WXY23, DD23a, DD23b}, have led to an essentially comprehensive understanding of information thresholds in correlation detection and vertex matching. On the other hand, continual advancements in algorithms for both problems are evident in works such as \cite{PG11, YG13, LFP14, KHG15, FQRM+16, SGE17, BCL+19, DMWX21, FMWX23a, FMWX23b, BSH19, CKMP19, DCKG19, MX20, GM20, GML20+, MRT21, MRT23, MWXY21+, GMS22+, MWXY23, DL22+, DL23+}. The current state of the art in algorithms can be summarized as follows: in the sparse regime, efficient matching algorithms are available when the correlation exceeds the square root of Otter’s constant (which is approximately 0.338) \cite{GML20+, GMS22+, MWXY21+, MWXY23}; in the dense regime, efficient matching algorithms exist as long as the correlation exceeds an arbitrarily small constant \cite{DL22+, DL23+}.

Roughly speaking, the separation between the sparse and dense regimes mentioned above depends on whether the average degree grows polynomially or sub-polynomially.
In addition, although it seems rather elusive to prove hardness on the graph matching problem for a typical instance under the assumption of P$\neq$NP, in \cite{DDL23+} the authors presented evidences that the state-of-the-art algorithms indeed capture the correct computation thresholds based on the analysis of a specific class known as low-degree polynomial algorithms. Along this line, efforts have been made also on the (closely related) random optimization problem of maximizing the overlap between two independent \ER graphs (see \cite{DDG22, DGH23}).

Since our current understanding for the information-computation transition of the correlated \ER model is more or less satisfactory, an important future direction is to  understand the matching problem in other important correlated random graph models such as the random growing graphs \cite{RS22}, the stochastic block model \cite{RS21} and the inhomogeneous random graphs \cite{DFW23+}. 
Still, these models with independent edge occurrences fail to capture the complicated dependence structure in real-world networks. As an important example for modeling realistic networks, the random graph geometric model incorporates edge correlations via some underlying spatial geometry \cite{AG14}: given random vectors $\{X_i\}$, the edge connection of the graph is determined by
thresholding $\langle X_i,X_j \rangle$ and thus is strongly related to the model considered in this paper. 
While the Gaussian geometric models are not graph models in the most classical sense, they do share many important features of random geometric graphs including modeling edge dependence. We believe that it is a necessary and substantial step to understand the Gaussian geometric models before one analyzes the more challenging geometric graph model.
In \cite{WWXY22+}, inspired by the analysis of the stronger linear assignment model \cite{DCK19, DCK20, KNW22}, the authors obtained the informational thresholds for Gaussian geometric models up to some constant factor: basically they showed that in the low-dimensional regime where $d=o(\log n)$, it is possible to obtain an exact recovery (respectively, almost exact recovery) of the latent permutation when the noise parameter $\sigma=o(n^{-2/d})$ (respectively, when $\sigma=o(n^{-1/d})$).
In addition, they observed by numerical experiments that the Umeyama algorithm indeed behaves well in the low-dimensional regime. Motivated by their results we provide rigorous analysis of this algorithm, showing that it achieves exact recovery (respectively, almost exact recovery) when the noise parameter $\sigma$ is below the information thresholds up to a $\operatorname{poly}(d)$ factor.

\subsection{Notations}
Let $W_1(\mu,\nu)$ be the Wasserstein distance between two probability distributions $\mu$ and $\nu$, namely
\begin{equation}{\label{eq-def-Wasserstein-distance}}
    W_1(\mu,\nu) := \inf_{\gamma \in \Gamma(\mu,\nu)} \mathbb{E}_{(x,y)\sim\gamma} \Big[ \|x-y\|_2 \Big] \,, 
\end{equation}
where $\Gamma(\mu,\nu)$ is the set of all couplings between $\mu$ and $\nu$. In addition, for two random vectors $X,Y$, let $W_1(X,Y)=W_1(\mu_X,\mu_Y)$, where $\mu_X,\mu_Y$ are the probability measures induced by $X,Y$ respectively. We denote by $\mathcal{N}(\mu,\sigma^2)$ the normal random variable with mean $\mu$ and variance $\sigma^2$.
   
   For two $l\!*\!m$ matrices $M_1$ and $M_2$, we define their inner product to be
\[
\langle M_1,M_2\rangle:=\sum_{i=1}^l \sum_{j=1}^m M_1(i,j)M_2(i,j) \,.
\]
In addition, for an $l\!*\!m$ symmetric matrix $M$, denote $\lambda_1(M) \geq \ldots \geq \lambda_{l \wedge m}(M)$ as its singular values, and denote
\begin{align*}
    \| M \|_{\operatorname{F}} = \langle M,M \rangle^{\frac{1}{2}} = \Big( \sum_{i=1}^{l \wedge m} \lambda_i(M)^2 \Big)^{\frac{1}{2}}, \
    \| M \|_{\operatorname{op}} = \lambda_1(M), \
    \| M \|_{\infty} = \max_{ \substack{ 1 \leq i \leq l \\ 1 \leq j \leq m } } |M_{i,j}| 
\end{align*}
to be its Frobenius norm, operator norm and $\infty$-norm respectively. Furthermore, if $M$ is symmetric, we denote $\tau_1(M) \geq \ldots \geq \tau_{l}(M)$ to be the eigenvalues of $M$. We also denote $\operatorname{I}_{m}$ as the $m\!*\!m$ identity matrix.

For any two positive sequences $\{a_n\}$ and $\{b_n\}$, we write equivalently $a_n=O(b_n)$, $b_n=\Omega(a_n)$, $a_n\lesssim b_n$ and $b_n\gtrsim a_n$ if there exists a positive absolute constant $c$ such that $a_n/b_n\leq c$ holds for all $n$. We write $a_n=o(b_n)$, $b_n=\omega(a_n)$, $a_n\ll b_n$, and $b_n\gg a_n$ if $a_n/b_n\to 0$ as $n\to\infty$.

\section{Proofs of the main theorems}

This section is devoted to the proofs of Theorems~\ref{thm-almost-exact-recovery}, \ref{thm-exact-recovery}, \ref{thm-almost-exact-recovery-distance} and \ref{thm-exact-recovery-distance}. For notational convenience, in the proofs of Theorems~\ref{thm-almost-exact-recovery} and \ref{thm-almost-exact-recovery-distance} (respectively, Theorems~\ref{thm-exact-recovery} and \ref{thm-exact-recovery-distance}) we assume $\sigma=d^{-3} n^{-\frac{1}{d}}/\omega_n$ (respectively, $\sigma=d^{-3} n^{-\frac{2}{d}}/\omega_n$) for some fixed sequence $\omega_n$ with $\omega_n\to \infty$. We will also define
\begin{equation}{\label{eq-def-widehat-omega}}
    \widehat{\omega}_n = \sqrt{\omega_n \wedge \log n}\quad\text{and} \quad \widehat{\sigma} = \sigma \omega_n/\widehat{\omega}_n \,.
\end{equation}
Here $\widehat{\omega}_n$ and $\widehat{\sigma}$ serve as truncations to avoid certain technical issues when $\omega_n$ grows too quickly towards infinity. Clearly we still have $1\ll\widehat{\omega}_n$ and $\widehat{\sigma}$ remains below the desired threshold (i.e. $d^{-3} n^{-1/d}$ or $d^{-3} n^{-2/d}$) with a slack factor of $\widehat \omega_n$. 
In the following subsections, our focus is on proving Theorems~\ref{thm-almost-exact-recovery} and \ref{thm-exact-recovery}. Given the similarity between the proofs of the dot-product model and the distance model, for Theorems~\ref{thm-almost-exact-recovery-distance} and \ref{thm-exact-recovery-distance}, we will provide an outline with the main differences while adapting arguments from Theorems~\ref{thm-almost-exact-recovery} and \ref{thm-exact-recovery} without presenting full details.

\subsection{Comparison between right singular vectors}{\label{subsection:comparison-SVD}}
We first consider the dot-product model. Denote
\begin{equation}{\label{eq-def-SVD}}
    \mathsf{SVD}(X)=U {\Lambda} Q^{\top} \mbox{ and } \mathsf{SVD}(Y)=V {\Sigma} R^{\top}
\end{equation}
as the singular value decompositions of $X$ and $Y$ respectively, where $Q,R$ are $d*d$ orthogonal matrices and $\Lambda,\Sigma$ are $d\!*\!d$ diagonal matrices such that $\Lambda_{1,1} \geq \ldots \geq \Lambda_{d,d} \geq 0$ and $\Sigma_{1,1} \geq \ldots \geq \Sigma_{d,d} \geq 0$. As will be shown, the main technical input is the following proposition, which reveals that $Q$ and $R$ are ``relatively close'' up to the multiplication of a matrix in $\mathcal{S}_d$. 
Define $\mathcal E_0$ to be the event that there exists $\Psi_0 \in \mathcal{S}_d$ such that $\| Q \Psi_0 - R \|_{\Fop} \leq d^3 \widehat{\sigma}$.
\begin{proposition}\label{prop-diff-orthogonal-matrix}
     We have $\Pb(\mathcal E_0)=1-o(1)$. 
\end{proposition}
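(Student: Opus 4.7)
The plan is to deduce the proposition via a Davis-Kahan style eigenvector perturbation argument applied to the two symmetric matrices $X^\top X = Q\Lambda^2 Q^\top$ and $Y^\top Y = R\Sigma^2 R^\top$: since $Q$ and $R$ are precisely their eigenvector bases and the sign indeterminacy in choosing individual eigenvectors corresponds exactly to multiplication by $\Psi_0 \in \mathcal S_d$, it suffices to bound the Frobenius distance of their eigenvectors up to signs. Writing $Y = \Pi^* X + \sigma Z$ and expanding gives
\[
Y^\top Y - X^\top X \;=\; H \;:=\; \sigma\bigl(X^\top (\Pi^*)^\top Z + Z^\top \Pi^* X\bigr) + \sigma^2 Z^\top Z.
\]
Note that adding a multiple of $I_d$ to $H$ does not change eigenvectors, so one may effectively replace $\sigma^2 Z^\top Z$ by its traceless part $\sigma^2(Z^\top Z - nI_d)$, whose operator norm is $O(\sigma^2\sqrt{nd})$ and hence negligible compared with the main cross term.

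The first ingredient is the bound $\|H\|_{\op}\lesssim \widehat\sigma\sqrt{nd}$ on an event of probability $1-o(1)$. Since $(\Pi^*)^\top Z \stackrel{d}{=} Z$ is independent of $X$, the product $X^\top(\Pi^*)^\top Z$ has the same law as $X^\top Z$ with $X,Z$ independent $n\!*\!d$ standard Gaussians; standard Gaussian-matrix concentration yields $\|X^\top Z\|_{\op}\lesssim\sqrt{nd}\,\widehat\omega_n$ on a high-probability event. The slack between $\sigma$ and $\widehat\sigma$ is precisely the buffer for the $\widehat\omega_n$ tail factor, explaining the use of $\widehat\sigma$ in the statement.

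The second, technically most delicate, ingredient is quantitative control of the eigenvalue gaps $\mathsf g_i := \min_{j\neq i}|\Lambda_{ii}^2 - \Lambda_{jj}^2|$ of the Wishart matrix $W := X^\top X$. The matrix $(W - nI_d)/\sqrt n$ has off-diagonal entries of variance $1$ and diagonal entries of variance $2$ (exactly matching GOE in the large-$n$ limit), so its bulk eigenvalue spacings are of order $d^{-1/2}$ and its smallest gap is of order $d^{-3/2}$ with good probability; rescaling gives gaps of order $\sqrt n/d^{3/2}$ for $W$. My target is the moment-type estimate
\[
\sum_{i=1}^d \frac{1}{\mathsf g_i^2} \;\lesssim\; \frac{d^5}{n}
\]
with probability $1-o(1)$, provable by small-ball/anticoncentration estimates for the Wishart spectrum (equivalently, by working with the Laguerre joint density). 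Assuming this, applying the Davis-Kahan $\sin\Theta$ theorem to each pair $(q_i,r_i)$ (using $\|H\|_{\op}\le\mathsf g_i/2$ to identify eigenvalue orderings) yields signs $s_i\in\{\pm1\}$ with $\|s_i q_i - r_i\|_2\leq\sqrt 2\,\|H\|_{\op}/\mathsf g_i$, and with $\Psi_0:=\operatorname{diag}(s_1,\ldots,s_d)\in\mathcal S_d$,
\[
\|Q\Psi_0 - R\|_{\Fop}^2 \;\le\; 2\|H\|_{\op}^2\sum_{i=1}^d\frac{1}{\mathsf g_i^2}\;\lesssim\; \widehat\sigma^2\, nd\cdot\frac{d^5}{n}\;=\;\widehat\sigma^2 d^6,
\]
which gives $\|Q\Psi_0-R\|_{\Fop}\le d^3\widehat\sigma$ after taking square roots.

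The main obstacle is the gap estimate: smallest-gap bounds for random matrices are classically delicate, and here one needs not merely a single $\min_i\mathsf g_i$ lower bound but control of the full sum $\sum_i\mathsf g_i^{-2}$, in the mildly growing regime $d=O(\log n)$ where one must go slightly beyond fixed-dimensional random matrix theorems. The role of the truncation $\widehat\omega_n$ is precisely to convert bounds that hold with moderate probability into ones that hold with probability $1-o(1)$ at the cost of mild slackness in the constants, and the bookkeeping between the Gaussian-concentration event and the gap event is likely where most of the technical care will live.
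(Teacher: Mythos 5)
Your high-level plan---apply Davis--Kahan eigenvector-by-eigenvector, control the eigenvalue gaps of $X^\top X$, then sum---is exactly the skeleton of the paper's proof, and the perturbation bound $\|H\|_{\op}\lesssim\widehat\sigma\sqrt{nd}$ together with the per-index estimate $\|s_iq_i-r_i\|\le\sqrt{2}\|H\|_{\op}/\mathsf g_i$ gives the same conclusion $\|Q\Psi_0-R\|_{\Fop}\le d^3\widehat\sigma$ that the paper reaches (the paper works through $|\sin\angle(q_i,r_i)|$ and $\|\Psi_0-Q^\top R\|_{\Fop}^2$, which is an equivalent accounting).

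The genuine gap is that the crucial ingredient---the eigenvalue gap estimate---is only asserted, not proved. The paper fills this in via a two-step argument you have not supplied: (i) a Stein's-method/Wasserstein bound (\cite[Proposition 4.3]{NPR10}-style multivariate normal approximation for quadratic functionals of $X$) showing $W_1(\mathbf X,\sqrt n\mathsf G)\le 3d$, which by Markov gives a coupling with $\|\mathbf X-\sqrt n\mathsf G\|_{\Fop}\le d\log n$ and hence eigenvalue displacements of order $d\log n\ll\sqrt n/(d\widehat\omega_n^{0.1})$; and (ii) the smallest-gap law for GOE from \cite{FTW19}, which gives $\delta(\mathsf G)\gtrsim 1/(d\widehat\omega_n^{0.1})$ with probability $1-o(1)$. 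Your proposed alternative---work directly with the Laguerre joint density for Wishart gaps---is plausible in spirit but is not an off-the-shelf tool in the regime $d=d_n\to\infty$ (even slowly), and you give no argument for why a small-ball bound of the required uniformity holds there; without it the proof doesn't close. A second, smaller inaccuracy: the smallest gap of a $d\!\times\!d$ standard GOE (entries $O(1)$) is of order $d^{-1}$, not $d^{-3/2}$ (the $d^{-3/2}$ scaling belongs to the normalization with spectrum in $[-2,2]$); this does not break your argument because the target $\sum_i\mathsf g_i^{-2}\lesssim d^5/n$ is loose enough (the correct min-gap bound already gives $\lesssim d^3/n$ up to $\operatorname{polylog}$ and makes the moment-type refinement unnecessary---the paper simply bounds all $\mathsf g_i$ below by the minimum), but it does indicate the step that needs care. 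So: right skeleton, but the central probabilistic estimate is missing, and the paper's Stein-method-coupling-to-GOE route is precisely how it gets filled in.
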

\begin{proof}
We write $Q=[q_1,\ldots,q_d]$ and $R=[r_1,\ldots,r_d]$. From the SVD decompositions of $X$ and $Y$ defined in \eqref{eq-def-SVD}, the spectral decompositions of $X^{\top}X$ and $Y^{\top}Y$ can be written as 
\begin{equation}
    X^{\top}X = Q \Lambda^2 Q^{\top} = \sum_{i=1}^{d} \lambda_i q_i q_i^{\top}, \quad Y^{\top}Y = R \Sigma R^{\top}  = \sum_{i=1}^{d} \mu_i r_i r_i^{\top} \,,
\end{equation}
where $\lambda_i = \Lambda_{i,i}^2$ and $\mu_i = \Sigma_{i,i}^2$.
 Define $\angle(q_i,r_i)$ to be the angle between vectors $q_i$ and $r_i$. The following crucial claim helps us control the dot product between $q_i$ and $r_i$.
\begin{claim}{\label{lem-bound-angle}}
    With probability $1-o(1)$ we have $|\operatorname{sin} \angle(q_i,r_i)| \leq d^2 \sigma \sqrt{\widehat{\omega}_n} $ for $1 \leq i \leq d$.
\end{claim}
\begin{proof}[Proof of Proposition~\ref{prop-diff-orthogonal-matrix} assuming Claim~\ref{lem-bound-angle}]
From Claim~\ref{lem-bound-angle} we obtain $1-\langle q_i,r_i \rangle^2 = \operatorname{sin}^2 \angle(q_i,r_i) \leq d^4 \sigma^2 \widehat{\omega}_n$. Thus, by taking $\psi_i=\operatorname{sgn} \langle q_i,r_i \rangle$, we have 
\begin{align*}
    \big| \psi_i-\langle q_i,r_i \rangle \big| = \big| 1-\psi_i \langle q_i,r_i \rangle \big| = 1 - \big|\langle q_i,r_i \rangle \big|=\frac{1-|\langle q_i,r_i\rangle|^2}{1+|\langle q_i,r_i\rangle|} \leq d^4 \sigma^2 {\widehat{\omega}_n} \,.
\end{align*}
Also, using the orthogonality of $\{q_i : 1 \leq i \leq d\}$ and $\{r_i : 1 \leq i \leq d\}$ we have for $i \neq j$
\begin{align*}
    \langle q_i,r_j \rangle^2 = \langle q_i-\psi_i r_i,r_j \rangle^2 \leq \|q_i-\psi_i r_i\|^2 = 2-2\psi_i \langle q_i,r_i \rangle \leq 2d^4 \sigma^2 {\widehat{\omega}_n} \,,
\end{align*}
where the first inequality follows from Cauchy-Schwartz inequality.
In conclusion, we have shown that for $\Psi_0 = \operatorname{diag}(\psi_i)_{i=1}^{d}$, we have 
\begin{align*}
    & \|Q\Psi_{0}-R\|_{\operatorname{F}}^2 = \|\Psi_{0}-Q^{\top}R\|_{\operatorname{F}}^2 = \sum_{i=1}^{d} (\psi_i-\langle q_i,r_i \rangle)^2 + \sum_{i \neq j} \langle q_i,r_j \rangle^2 \\
    \leq \ & d \cdot (2d^4 \sigma^2 \widehat{\omega}_n)^2 + d(d-1) \cdot 2d^4 \sigma^2 \widehat{\omega}_n \leq 2d^6 \sigma^2 \widehat{\omega}_n \overset{\eqref{eq-def-widehat-omega}}{=}d^6\widehat{\sigma}^2\frac{2\widehat\omega_n^3}{\omega_n^2}\leq (d^3 \widehat{\sigma})^2 \,.\qedhere
\end{align*}
\end{proof}
The rest of this subsection is devoted to the proof of Claim~\ref{lem-bound-angle}.
We view $q_i$ and $r_i$ as the unit eigenvectors of the centered matrix $\mathbf X=X^{\top}X-n \operatorname{I}_d$ and $\mathbf Y=Y^{\top}Y-n(1+\sigma^2) \operatorname{I}_d$ respectively and apply the Davis-Kahan theorem in \cite{DK70} (see also \cite[Theorem 1]{YWS14}). This yields that  
\begin{align*}
    \big| \operatorname{sin} \angle(q_i,r_i) \big| \leq \frac{ 2 \| \mathbf X - \mathbf Y \|_{\operatorname{op}} }{ \delta(\mathbf X) } \,,
\end{align*}
where $\delta(\mathbf X)$ is the smallest eigen-gap of $\mathbf X$ and is defined as $\min \{ |\tau_i(\mathbf X) - \tau_j(\mathbf X)| : i \neq j \}$. 
By the triangle inequality, we have that with probability $1-o(1)$,
\begin{align*}
    \|\mathbf Y - \mathbf X\|_{\operatorname{op}} &= \| (X+\sigma Z)^{\top} (X+\sigma Z) - X^{\top}X - \sigma^2 n \operatorname{I}_d \|_{\operatorname{op}} \\
    &\leq \sigma^2 \| Z^{\top}Z - n \operatorname{I}_d \|_{\operatorname{op}} + 2\sigma \| X^{\top} Z \|_{\operatorname{op}} \\
    &\leq \sigma^2 \| Z^{\top}Z - n\operatorname{I}_d \|_{\operatorname{F}} + 2\sigma \| X^{\top} Z \|_{\operatorname{F}} \leq 10d \sigma \sqrt{n} \widehat{\omega}_n^{0.1} \,,
\end{align*}
where the last inequality follows from $\mathbb{E} \big[ \| Z^{\top}Z - n\operatorname{I}_d \|_{\operatorname{F}}^2 \big], \mathbb{E}\big[ \| X^{\top} Z \|_{\operatorname{F}}^2 \big] \leq 2d^2 n$ and Chebyshev's inequality.
Now it remains to show with probability $1-o(1)$
\begin{equation}{\label{eq-goal-spectral-gap}}
    \min \big\{ |\tau_i(\mathbf X) - \tau_j(\mathbf X)| : i \neq j \big\} \geq \frac{1}{2d} \sqrt{n}/\widehat{\omega}_n^{0.1} \,.
\end{equation}
To this end, we let $\mathsf G$ be a standard $d\!*\!d$ Gaussian orthogonal ensemble (i.e., the real symmetric matrix with i.i.d. $\mathcal{N}(0,1)$ off-diagonal entries and i.i.d. $\mathcal{N}(0,2)$ diagonal entries) and prove the following two statements:
\begin{enumerate}
    \item[(1)] $\delta(\mathsf{G})$ is lower-bounded by $1/(d\widehat{\omega}_n^{0.1})$ with probability $1-o(1)$.
    \item[(2)] The $1$-Wasserstein distance between $\mathbf{X}$ and $\sqrt{n} \mathsf{G}$ is bounded by $3d$.
\end{enumerate}
We begin by completing the proof provided with (1) and (2). By (2) there exists a coupling $(\mathbf X, \sqrt{n} \mathsf{G})$ such that $\mathbb E \| \mathbf X - \sqrt{n} \mathsf{G} \|_{\operatorname{F}} \leq 3d$. Thus by Markov's inequality with probability $1-o(1)$ we have $\| \mathbf X - \sqrt{n} \mathsf{G} \|_{\operatorname{F}} \leq d \log n$. Combining this result with the min-max characterization of eigenvalues

\begin{equation}
    \tau_i(\mathsf A)=\max_{\dim(E)=i}\min_{\substack{\|x\|_2=1,\\x\in E}}\langle \mathsf Ax,x\rangle\,,\notag
\end{equation}
 it follows that with probability $1-o(1)$ (recall $d=O(\log n)$)
\begin{align*}
    \big| \tau_i(\mathbf X) - \tau_i(\sqrt{n}\mathsf G) \big| \leq d \log n \overset{\eqref{eq-def-widehat-omega}}{\leq} \sqrt{n}/(4d\widehat{\omega}_n^{0.1}) \,.
\end{align*}
Since (1) implies
$$ \min_{i \neq j} \big\{ |\tau_i(\sqrt{n} \mathsf G) - \tau_j(\sqrt{n} \mathsf G)| \big\} \geq \sqrt{n}/(d\widehat{\omega}_n^{0.1}) \textup{ with probability } 1-o(1) \,, $$ 
we immediately get that
$$ \min_{i \neq j} \big\{ |\tau_i(\mathbf X) - \tau_j(\mathbf X)| \big\} \geq \sqrt{n}/(2d\widehat{\omega}_n^{0.1}) \textup{ with probability } 1-o(1) \,, $$
leading to \eqref{eq-goal-spectral-gap}.

Now it remains to prove (1) and (2). For (1), we may assume that $d=d_n \to \infty$ since otherwise $1/(d\widehat{\omega}_n^{0.1}) \to 0$ and for fixed $d$ all eigenvalues of $\mathsf G$ are distinct with probability $1$. Using \cite[Corollary 1]{FTW19}, for any constant $0<A<B<\infty$, we have
    \begin{align*}
        &\lim_{n\to\infty} \Pb \Big( \min_{1\leq i\neq j\leq d_n} |\tau_i(\mathsf G)-\tau_j(\mathsf G)| \geq 1 /(d_n \widehat{\omega}_n^{0.1}) \Big)\\
        \geq & \lim_{n\to\infty} \Pb\Big( \min_{1\leq i\neq j\leq d_n} d_n|\tau_i(\mathsf G)-\tau_j(\mathsf G)| \in [A,B] \Big) = \int_A^B 2xe^{-x^2} \mathrm{d}x \,.
    \end{align*}
    Since the preceding inequality holds for all $A$ and $B$, sending $A \to 0$ and $B\to\infty$ yields that
    \begin{equation*}
    \lim_{n\to\infty} \Pb\Big( \min_{1\leq i\neq j\leq d_n} |\tau_i(\mathsf G)-\tau_j(\mathsf G)| \geq 1/(d_n \widehat{\omega}_n^{0.1}) \Big) =1\,.
    \end{equation*}
    Now we prove (2). Define the matrix-valued function $f(\cdot):\mathbb R^{nd} \to \mathbb R^{d(d+1)/2}$ to be 
    \begin{equation}
        f_{i,j}(M)=
        \begin{cases}
            \frac{1}{\sqrt{n}} \sum_{k=1}^n (M_{k,i}M_{k,j}),  &i<j\,, \\
            \frac{1}{\sqrt{2n}} \sum_{k=1}^n (M_{k,i}^2-1) ,   &i=j\,.
        \end{cases}
    \end{equation}
    Let $\mathbf{X}^{\operatorname{up}}:=\sqrt{2n}f(X)$, and we see that (and we define $\mathsf G^{\operatorname{up}}$ as below)
    \begin{align*}
        \mathbf{X}^{\operatorname{up}}_{i,j}=
        \begin{cases}
            \mathbf{X}_{i,j},  &i=j\,,  \\
            \sqrt{2} \mathbf{X}_{i,j}, &i<j \,.
        \end{cases} \quad \quad \quad
        \mathsf{G}^{\operatorname{up}}_{i,j}:=
        \begin{cases}
            \mathsf{G}_{i,j}, &i=j\,,\\
            \sqrt{2}\mathsf{G}_{i,j}, &i<j \,.
        \end{cases}
    \end{align*}
    It is not hard to see that 
    \begin{align*}
        \big\| \mathbf{X}-\mathsf{G} \big\|_{\operatorname{F}} = \big\| \mathbf{X}^{\operatorname{up}} - \mathsf{G}^{\operatorname{up}} \big\|_{2} := \Big( \sum_{i \leq j} \big( \mathbf{X}^{\mathrm{up}}_{i,j} - \mathsf{G}^{\mathrm{up}}_{i,j} \big)^2 \Big)^{\frac{1}{2}} \,.
    \end{align*}
     In what follows we consider vectorization of $f(X)$, $X$ and $\mathsf G^{\operatorname{up}}$ in the following manner: we regard $X$ as an $nd$-dimensional vector with i.i.d.\ standard Gaussian entries, regard $f(X)$ as a $d(d+1)/2$-dimensional vector, and regard $\frac{1}{\sqrt{2}} \mathsf{G}^{\operatorname{up}}$ as a $d(d+1)/2$-dimensional standard Gaussian vector. Then we can apply \cite[Proposition 4.3]{NPR10} to get that (note that $\mathbb{E}[ f_{i,j}^2(X) ]=1$ and $\mathbb{E}[ f_{i,j}(X) f_{i',j'}(X) ]=0$ for $(i, j) \neq (i', j')$)
    \begin{align}
        W_1( f(X), \frac{1}{\sqrt{2}} \mathsf{G}^{\mathrm{up}} ) \leq \sqrt{ \sum_{ (i,j),(i',j') } \mathbb{E} \Big[ \Big( \mathbf{1}_{(i,j)=(i',j')} - T_{(i,j);(i',j')}(X) \Big)^2 \Big] } \,, \label{eq-bound-W-1-dist}
    \end{align}
    where the summation is taken over all $(i,j)$ and $(i',j')$ such that $1 \leq i \leq j \leq d, 1 \leq i' \leq j' \leq d$, and
    \begin{align}
    T_{(i,j);(i',j')}(x) &= \int_0^1\frac{1}{2\sqrt{t}} \sum_{s=1}^{n} \sum_{t=1}^{d} \mathbb{E}\left[ \frac{\partial f_{i,j}}{\partial x_{st}}(x) \frac{\partial f_{i',j'}}{\partial x_{st}}(\sqrt{t}x+\sqrt{1-t}X) \right] \mathrm{d}t \nonumber \\
    &=\begin{cases}
    \frac{1}{n} \sum_{k=1}^n x_{k,a}^2, & i=i'=j=j'=a \,; \\
    \frac{1}{2n} \sum_{k=1}^n (x_{k,a}^2+x_{k,b}^2), & i=j=a,i'=j'=b \,; \\
    \frac{1}{2n} \sum_{k=1}^{n}( x_{k,a}x_{k,b} ), & \{ i,j \} = \{ a,c \}, \{ i',j' \}=\{ b,c \} \,;  \\
    0 , & \{ i,j \} \cap \{ i',j' \} = \emptyset \,.
    \end{cases}
    \end{align}
    Then by a straightforward computation we have that when $i=i'=j=j'=a$
    \begin{align*}
        \mathbb{E}\Big[ \Big( \mathbf{1}_{(i,j)=(i',j')} - T_{(i,j);(i',j')}(X) \Big)^2 \Big] = \mathbb{E} \Big[ \Big(1-\frac{1}{n} \sum_{k=1}^{n} X_{k,a}^2 \Big)^2 \Big] = \frac{2}{n} \,.
    \end{align*}
    Similarly we have that when $i=j=a,i'=j'=b$
    \begin{align*}
        \mathbb{E}\Big[ \Big( \mathbf{1}_{(i,j)=(i',j')} - T_{(i,j);(i',j')}(X) \Big)^2 \Big] = \mathbb{E} \Big[ \Big(1-\frac{1}{2n} \sum_{k=1}^{n} (X_{k,a}^2+X_{k,b}^2) \Big)^2 \Big] = \frac{1}{n} \,,
    \end{align*}
    and when $\{i,j\}=\{a,c\},\{i',j'\}=\{b,c\}$
    \begin{align*}
        \mathbb{E}\Big[ \Big( \mathbf{1}_{(i,j)=(i',j')} - T_{(i,j);(i',j')}(X) \Big)^2 \Big] = \mathbb{E} \Big[ \Big(\frac{1}{n} \sum_{k=1}^{n} X_{k,a}X_{k,b} \Big)^2 \Big] = \frac{1}{n} \,,
    \end{align*}
    and finally when $\{i,j\}\cap\{i',j'\}=\emptyset$
    \begin{align*}
        \mathbb{E}\Big[ \Big( \mathbf{1}_{(i,j)=(i',j')} - T_{(i,j);(i',j')}(X) \Big)^2 \Big] =0 \,.
    \end{align*}
    Plugging these expressions into \eqref{eq-bound-W-1-dist} yields that $W_1( \frac{1}{\sqrt{2n}} \mathbf{X}^{\mathrm{up}}, \frac{1}{\sqrt{2}} \mathsf{G}^{\mathrm{up}} ) \leq \frac{d}{\sqrt{n}}$, thereby leading to the desired result (2). 
\end{proof}

For any $\Psi_0 \in \mathcal S_d$, define $\mathcal E_1(\Psi_0)$ to be the intersection of the following two events:
    \begin{align}
        & \|U\Psi_0-V\|_{\operatorname{F}} \leq 3 d^3 \widehat{\sigma} \,,  \label{eq-def-good-event-0-III}\\
        &\|\operatorname I_d-nQ \Lambda^{-1} \Psi_0 \Sigma^{-1} R^\top \|_{\operatorname{op}} \leq 2d^3 \widehat{\sigma}  \,.\label{eq-def-good-event-0-IV}
    \end{align}
    In addition, define the event $\mathcal E_\star$ to be the intersection of the following events:
    \begin{align}
        &\sqrt{n}-d\widehat{\omega}_n \leq \min_{1\leq i\leq d} \Lambda_{i,i} \leq \max_{1\leq i\leq d} \Lambda_{i,i} \leq \sqrt{n}+d\widehat{\omega}_n \,,\label{eq-eigen-Lambda}\\
        &\sqrt{(1+\sigma^2)n}-d\widehat{\omega}_n \leq \min_{1\leq i\leq d} \Sigma_{i,i} \leq \max_{1\leq i\leq d} \Sigma_{i,i} \leq \sqrt{(1+\sigma^2)n}+d\widehat{\omega}_n \,,\label{eq-eigen-Sigma}\\
        &\| X \|_{\operatorname{F}}, \| Z \|_{\operatorname{F}} \leq 2\sqrt{nd} \text{ and } \| Y \|_{\operatorname{F}} \leq 2 \sqrt{(1+\sigma^2)nd} \leq 4 \sqrt{nd}\,.\label{eq-Fnorms-X-Y}
    \end{align}
\begin{lemma}{\label{lem-Estar}}
    We have $\Pb(\mathcal E_\star)=1-o(1)$.
\end{lemma}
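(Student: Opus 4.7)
The event $\mathcal E_\star$ is a conjunction of three standard Gaussian concentration statements, so the plan is to verify each of them separately and then take a union bound. The cornerstone observation is that both singular-value bounds \eqref{eq-eigen-Lambda} and \eqref{eq-eigen-Sigma} reduce, via $\Lambda_{i,i}^2=\tau_i(X^\top X)$ and $\Sigma_{i,i}^2=\tau_i(Y^\top Y)$ together with Weyl's inequality, to operator-norm control of the centered matrices $\mathbf X=X^\top X-n\operatorname I_d$ and $\mathbf Y=Y^\top Y-(1+\sigma^2)n\operatorname I_d$. For this we will recycle the computation already carried out inside the proof of Proposition~\ref{prop-diff-orthogonal-matrix}: using $\mathbb E[\|Z^\top Z-n\operatorname I_d\|_{\operatorname F}^2],\mathbb E[\|X^\top Z\|_{\operatorname F}^2]\leq 2d^2n$ together with Chebyshev's inequality, we obtain $\|\mathbf X\|_{\operatorname{op}},\|\mathbf Y-\mathbf X\|_{\operatorname{op}}\lesssim d\sqrt{n}\,\widehat\omega_n^{0.1}$ with probability $1-o(1)$. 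Hence by the triangle inequality both $\|\mathbf X\|_{\operatorname{op}}$ and $\|\mathbf Y\|_{\operatorname{op}}$ are at most $Cd\sqrt n\,\widehat\omega_n^{0.1}$ on a good event.

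Converting these spectral bounds on $X^\top X$ and $Y^\top Y$ into bounds on $\Lambda_{i,i}$ and $\Sigma_{i,i}$ is a one-line computation: since $\tau_i(X^\top X)\in [n-\|\mathbf X\|_{\operatorname{op}},n+\|\mathbf X\|_{\operatorname{op}}]$ and similarly for $Y^\top Y$, the elementary estimate $|\sqrt{a}-\sqrt{b}|\leq |a-b|/(\sqrt{a}+\sqrt{b})$ gives
\begin{equation*}
\bigl|\Lambda_{i,i}-\sqrt n\,\bigr|\leq \frac{\|\mathbf X\|_{\operatorname{op}}}{\sqrt n}\lesssim d\,\widehat\omega_n^{0.1},\qquad \bigl|\Sigma_{i,i}-\sqrt{(1+\sigma^2)n}\,\bigr|\lesssim d\,\widehat\omega_n^{0.1},
\end{equation*}
and since $\widehat\omega_n^{0.1}\ll\widehat\omega_n$ these immediately imply \eqref{eq-eigen-Lambda} and \eqref{eq-eigen-Sigma}. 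Here we are using $d=O(\log n)$ and $\sigma=o(1)$ to ensure that the perturbation is much smaller than $\sqrt n$ so that the Taylor-style bound is valid.

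It remains to control the Frobenius norms in \eqref{eq-Fnorms-X-Y}. Since $\|X\|_{\operatorname F}^2$ and $\|Z\|_{\operatorname F}^2$ are $\chi^2$ variables with $nd$ degrees of freedom (so with mean $nd$ and variance $2nd$), Chebyshev's inequality yields $\|X\|_{\operatorname F},\|Z\|_{\operatorname F}\leq 2\sqrt{nd}$ with probability $1-o(1)$; the triangle inequality $\|Y\|_{\operatorname F}\leq\|X\|_{\operatorname F}+\sigma\|Z\|_{\operatorname F}\leq 2(1+\sigma)\sqrt{nd}\leq 4\sqrt{nd}$ then takes care of $Y$ (alternatively, $\|Y\|_{\operatorname F}^2$ is itself a scaled $\chi^2_{nd}$ variable with mean $(1+\sigma^2)nd$). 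A final union bound over the four events gives $\Pb(\mathcal E_\star)=1-o(1)$. There is no real obstacle in this lemma beyond bookkeeping of the $\widehat\omega_n$ slack factors; the one place to be slightly careful is checking that the quadratic error $\|\mathbf X\|_{\operatorname{op}}/\sqrt n$ is indeed dominated by $d\widehat\omega_n$ so that the bound $\sqrt n\pm d\widehat\omega_n$ claimed in \eqref{eq-eigen-Lambda}--\eqref{eq-eigen-Sigma} is obtained with room to spare.
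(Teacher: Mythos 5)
Your proof is correct, and it takes a genuinely different (more self-contained) route than the paper. For the singular-value bounds \eqref{eq-eigen-Lambda}--\eqref{eq-eigen-Sigma}, the paper simply cites the two-sided singular-value concentration theorem for matrices with independent sub-Gaussian rows (Vershynin, Theorem~4.6.1), which gives a sub-Gaussian tail in one stroke. You instead derive the same bounds from first principles by Weyl's inequality applied to $\mathbf X=X^\top X-n\operatorname{I}_d$ and $\mathbf Y=Y^\top Y-(1+\sigma^2)n\operatorname{I}_d$, plus the elementary estimate $|\sqrt a-\sqrt b|\leq|a-b|/(\sqrt a+\sqrt b)$; since $\mathbf X\overset{d}{=}Z^\top Z-n\operatorname{I}_d$, the bound $\E[\|\mathbf X\|_{\operatorname F}^2]\leq 2d^2n$ that the paper already computes inside Proposition~\ref{prop-diff-orthogonal-matrix} gives $\|\mathbf X\|_{\operatorname{op}}\leq\|\mathbf X\|_{\operatorname F}\lesssim d\sqrt n\,\widehat\omega_n^{0.1}$ via Chebyshev, with $\|\mathbf Y\|_{\operatorname{op}}$ following by the triangle inequality, and dividing by $\sqrt n$ leaves ample room against the allowed slack $d\widehat\omega_n$. (Implicitly you apply Chebyshev separately to $\|\mathbf X\|_{\operatorname F}$; the paper's earlier display only bounds $\|\mathbf Y-\mathbf X\|_{\operatorname{op}}$, so this extra application is needed, but it is straightforward since $\mathbf X$ and $Z^\top Z-n\operatorname I_d$ have the same law.) For \eqref{eq-Fnorms-X-Y} the paper invokes Bernstein's inequality for a sub-exponential tail, whereas you use Chebyshev (variance $2nd$ of a $\chi^2_{nd}$) plus a triangle inequality for $Y$; both yield the needed $1-o(1)$, Bernstein being sharper but not required here. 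In short, the paper's proof is shorter by appealing to off-the-shelf results; yours is longer but fully self-contained and reuses computations already present in the paper.
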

\begin{proof}
    We need to prove that \eqref{eq-eigen-Lambda}, \eqref{eq-eigen-Sigma} and \eqref{eq-Fnorms-X-Y} hold with probability $1-o(1)$. For \eqref{eq-eigen-Lambda} and \eqref{eq-eigen-Sigma}, it follows from \cite[Theorem 4.6.1]{Vershynin18}. For \eqref{eq-Fnorms-X-Y}, we apply Bernstein's inequality \cite{Bernstein1924} and get that
    \begin{align*}
        &\Pb\left( \big| \|X\|_{\Fop}^2-nd \big|\geq 3nd \right) \leq \exp(-cnd) \,, \\
        &\Pb\left( \big| \|Y\|_{\Fop}^2-(1+\sigma^2)nd \big| \geq 3(1+\sigma^2) nd \right) \leq \exp(-cnd) \,.
    \end{align*}
    Altogether, we complete the proof of the lemma.
\end{proof}
   \begin{lemma}\label{lem-Estar-to-E1Psi_0}
       Assume that $\mathcal E_\star$ holds. Then for any $\Psi_0$ such that $\|Q\Psi_0-R\|_{\Fop}\leq d^3\widehat\sigma $, we have that $\mathcal E_1(\Psi_0)$ holds.
   \end{lemma}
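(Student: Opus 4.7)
The plan is to combine the hypothesis $\|Q\Psi_0-R\|_{\Fop}\le d^3\widehat\sigma$ with the eigenvalue and Frobenius-norm bounds supplied by $\mathcal E_\star$, and then manipulate the SVD identities $U=XQ\Lambda^{-1}$, $V=YR\Sigma^{-1}$ (treating $Y=X+\sigma Z$, i.e.\ $\Pi^*=\operatorname I_n$, which may be assumed without loss for this lemma since the right singular vectors are invariant under $\Pi^*$) until the two inequalities defining $\mathcal E_1(\Psi_0)$ fall out by routine matrix algebra.

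I would first tackle \eqref{eq-def-good-event-0-IV}, which is the cleaner of the two. Since $\Lambda,\Psi_0,\Sigma$ are all diagonal, so is $D:=n\Lambda^{-1}\Psi_0\Sigma^{-1}$, with entries $D_{ii}=n\Psi_{0,ii}/(\Lambda_{ii}\Sigma_{ii})$. By \eqref{eq-eigen-Lambda}--\eqref{eq-eigen-Sigma} one has $\Lambda_{ii},\Sigma_{ii}=\sqrt n(1+o(1))$, so $\|D-\Psi_0\|_{\op}=O(\sigma^2+d\widehat\omega_n/\sqrt n)=o(d^3\widehat\sigma)$. Then I would decompose
\[
\operatorname I_d - nQ\Lambda^{-1}\Psi_0\Sigma^{-1}R^\top \;=\; (\operatorname I_d - Q\Psi_0 R^\top) + Q(\Psi_0-D)R^\top,
\]
use $\operatorname I_d-Q\Psi_0R^\top = -(Q\Psi_0-R)R^\top$ (valid because $R$ is orthogonal) to bound the first summand by $\|Q\Psi_0-R\|_{\op}\le\|Q\Psi_0-R\|_{\Fop}\le d^3\widehat\sigma$ in operator norm, and use the orthogonality of $Q,R$ to bound the second summand by $\|\Psi_0-D\|_{\op}=o(d^3\widehat\sigma)$. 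Summing gives \eqref{eq-def-good-event-0-IV}.

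For \eqref{eq-def-good-event-0-III}, substitute $X=U\Lambda Q^\top$ and $Y=X+\sigma Z$ into $V=YR\Sigma^{-1}$ to get
\[
U\Psi_0-V \;=\; U\bigl(\Psi_0-\Lambda Q^\top R\Sigma^{-1}\bigr) \;-\; \sigma Z R\Sigma^{-1}.
\]
Writing $Q^\top R = \Psi_0 - Q^\top(Q\Psi_0-R)$ breaks the first factor as $(\Psi_0-\Lambda\Psi_0\Sigma^{-1})+\Lambda Q^\top(Q\Psi_0-R)\Sigma^{-1}$. The first piece is diagonal with entries of magnitude $O(\sigma^2+d\widehat\omega_n/\sqrt n)$, so its Frobenius norm is $o(d^3\widehat\sigma)$; the second piece has Frobenius norm at most $\|\Lambda\|_{\op}\|\Sigma^{-1}\|_{\op}\|Q\Psi_0-R\|_{\Fop}\le(1+o(1))d^3\widehat\sigma$. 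Finally, $\|\sigma Z R\Sigma^{-1}\|_{\Fop}\le\sigma\|Z\|_{\Fop}\|\Sigma^{-1}\|_{\op}=O(\sigma\sqrt d)=o(d^3\widehat\sigma)$ by \eqref{eq-Fnorms-X-Y}. Since $\|U\|_{\op}=1$, the triangle inequality then delivers $\|U\Psi_0-V\|_{\Fop}\le 3d^3\widehat\sigma$.

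The only real difficulty is the bookkeeping: one must verify that the lower-order errors of size $\sigma^2$ and $d\widehat\omega_n/\sqrt n$, which arise from $\Lambda,\Sigma$ deviating slightly from $\sqrt n\operatorname I_d$, fit inside the $d^3\widehat\sigma$ budget. This follows from $\widehat\sigma\ge\sigma$, $d=O(\log n)$, and $\widehat\sigma\gtrsim d^{-3}n^{-1/d}/\widehat\omega_n$; a short calculation (using $d\ge 5$ so that $n^{1/2-1/d}\gg\widehat\omega_n^2 d^{1.5}$) confirms it.
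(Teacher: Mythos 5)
Your proof is correct and takes essentially the same route as the paper's: both arguments reduce each inequality in $\mathcal E_1(\Psi_0)$ to a triangle inequality, then bound the pieces using (a) the hypothesis $\|Q\Psi_0-R\|_{\Fop}\le d^3\widehat\sigma$, (b) the eigenvalue estimates \eqref{eq-eigen-Lambda}--\eqref{eq-eigen-Sigma}, and (c) the Frobenius-norm bounds \eqref{eq-Fnorms-X-Y}, followed by the same bookkeeping showing the lower-order terms of size $\sigma^2$ and $d\widehat\omega_n/\sqrt n$ are $o(d^3\widehat\sigma)$. The only cosmetic difference is the choice of intermediate approximation: the paper inserts $\frac{1}{\sqrt n}\operatorname I_d$ in place of $\Lambda^{-1}$ and $\Sigma^{-1}$ separately (yielding a three-term split in each case), while you compare $n\Lambda^{-1}\Psi_0\Sigma^{-1}$ directly to $\Psi_0$ and use $\|U\|_{\op}=1$ to avoid routing through $\|X\|_{\Fop}$; both are equivalent and the estimates land in the same place.
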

\begin{proof}
To see that \eqref{eq-def-good-event-0-III} holds, recalling that $U=XQ \Lambda^{-1} $ and $V=Y R\Sigma^{-1}$, we have (also recalling $\| \mathsf{AO} \|_{\operatorname{F}} = \| \mathsf{A} \|_{\operatorname{F}}$ for any orthogonal matrix $\mathsf{O}$)
\begin{align}
    & \| U \Psi_0 - V \|_{\operatorname{F}} = \| XQ \Lambda^{-1} \Psi_0 - YR \Sigma^{-1}  \|_{\operatorname{F}} \nonumber \\
    \leq \ & \| X Q (\Lambda^{-1}-\frac{1}{\sqrt{n}} \mathrm{I}_d) \Psi_0 \|_{\operatorname{F}} + \| YR (\Sigma^{-1}-\frac{1}{\sqrt{n}}\mathrm{I}_d) \|_{\operatorname{F}} + \frac{1}{\sqrt{n}} \| XQ \Psi_0 - YR \|_{\operatorname{F}} \nonumber \\
    \leq \ & \| X \|_{\operatorname{F}} \| (\Lambda^{-1}-\frac{1}{\sqrt{n}} \mathrm{I}_d) \|_{\operatorname{op}} + \| Y \|_{\operatorname{F}} \| (\Sigma^{-1}-\frac{1}{\sqrt{n}}\mathrm{I}_d) \|_{\operatorname{op}} + \frac{1}{\sqrt{n}} \| XQ \Psi_0 - YR \|_{\operatorname{F}} \,, \label{eq-matrix-devide-3-parts}
\end{align}
where the first inequality follows from the triangle inequality and the second inequality follows from the fact that $\| \mathsf{AB} \|_{\operatorname{F}} \leq \| \mathsf{A} \|_{\operatorname{F}} \| \mathsf{B} \|_{\operatorname{op}}$.
By \eqref{eq-eigen-Lambda} and \eqref{eq-eigen-Sigma} we have that
\begin{equation}\label{eq-est-singular-value-Lambda-Sigma}
\begin{split}
    &\| \Lambda - \sqrt{n} \operatorname{I}_d \|_{\operatorname{op}}\leq d \widehat{\omega}_n \mbox{ and } \| \Sigma - \sqrt{(1+\sigma^2)n} \operatorname{I}_d \|_{\operatorname{op}} \leq d\widehat{\omega}_n \,;\\
    & \| \Lambda^{-1} - \frac{1}{\sqrt{n}} \operatorname{I}_d \|_{\operatorname{op}} \leq \frac{2d\widehat{\omega}_n}{n} \mbox{ and } \| \Sigma^{-1} - \frac{1}{\sqrt{n}} \operatorname{I}_d \|_{\operatorname{op}} \leq \frac{2\sigma^2\sqrt{n}+2d\widehat{\omega}_n}{n} \,. 
\end{split}
\end{equation}
Combined with \eqref{eq-Fnorms-X-Y}, it yields that
\begin{equation}{\label{eq-bound-matrix-part-1,2}}
    \begin{aligned}
        &\| X \|_{\operatorname{F}} \| \Lambda^{-1}-\frac{1}{\sqrt{n}} \mathrm{I}_d \|_{\operatorname{op}} \leq \frac{ 4d\sqrt{d} \widehat{\omega}_n }{ \sqrt{n} } \overset{\eqref{eq-def-widehat-omega}}{\leq} 0.1d^3 \widehat\sigma \,; \\
        &\| Y \|_{\operatorname{F}} \| (\Sigma^{-1}-\frac{1}{\sqrt{n}}\mathrm{I}_d) \|_{\operatorname{op}} \leq \frac{ 4d\widehat{\omega}_n }{ \sqrt{n} } + 8\sigma^2 \sqrt{d} \overset{\eqref{eq-def-widehat-omega}}{\leq} 0.1d^3 \widehat\sigma \,.
    \end{aligned}
\end{equation}
Also, from \eqref{eq-eigen-Lambda} we have that $\| X \|_{\operatorname{op}} \leq 2\sqrt{n}$ with probability $1-o(1)$, and thus
\begin{align}
    \frac{1}{\sqrt{n}} \| X Q \Psi_0 - Y R \|_{\operatorname{F}} &= \frac{1}{\sqrt{n}} \| X Q \Psi_0 - (X+\sigma Z) R \|_{\operatorname{F}} \nonumber \\
    &\leq \frac{\sigma}{\sqrt{n}} \| Z R \|_{\operatorname{F}} + \frac{1}{\sqrt{n}} \| X (Q \Psi_0-R) \|_{\operatorname{F}} \nonumber \\
    &\leq \frac{\sigma}{\sqrt{n}} \| Z \|_{\operatorname{F}} + \frac{1}{\sqrt{n}} \| X \|_{\operatorname{op}} \| (Q \Psi_0-R) \|_{\operatorname{F}} \nonumber \\
    &\overset{\eqref{eq-Fnorms-X-Y}}{\leq} \sigma\sqrt{d}\widehat\omega_n^{0.1} + 2 d^3 \widehat\sigma \overset{\eqref{eq-def-widehat-omega}}{\leq} 2.5d^3 \widehat\sigma  \,. \label{eq-bound-matrix-part-3} 
\end{align}
Plugging \eqref{eq-bound-matrix-part-1,2} and \eqref{eq-bound-matrix-part-3} into \eqref{eq-matrix-devide-3-parts} yields \eqref{eq-def-good-event-0-III}.
Finally, for \eqref{eq-def-good-event-0-IV} we have 
\begin{align}
    & \big\| \operatorname I_d-nQ \Lambda^{-1} \Psi_0 \Sigma^{-1} R^\top \big\|_{\operatorname{op}} \nonumber \\
    \leq \ & \big\| \operatorname{I}_d-Q \Psi_0 R^\top \big\|_{\operatorname{op}} + \big\| \sqrt{n} Q (\Lambda^{-1}-\frac{1}{\sqrt{n}} \mathrm{I}_d) \Psi_0  R^{\top} \big\|_{\op} + \big\| \sqrt{n} Q \Lambda^{-1} \Psi_0 ( \Sigma^{-1} - \frac{1}{\sqrt{n}} \mathrm{I}_d ) R^{\top} \big\|_{\operatorname{op}} \nonumber \\
    \leq \ & \big\| R - Q \Psi_0 \big\|_{\operatorname{op}} + \sqrt{n} \big\| \Lambda^{-1} - \frac{1}{\sqrt{n}} \operatorname{I}_d \big\|_{\operatorname{op}} + \sqrt{n} \big\| \Lambda^{-1} \big\|_{\op} \big\| \Sigma^{-1} - \frac{1}{\sqrt{n}} \operatorname{I}_d \big\|_{\operatorname{op}}  \,, \label{eq-goal-op-norm-2.10}
\end{align}
where the first inequality follows from the triangle inequality and the second inequality follows from the orthogonal invariance of operator norm and the fact that $\| \mathsf{AB} \|_{\op} \leq \| \mathsf{A} \|_{\op}\| \mathsf{B} \|_{\op}$. Combining \eqref{eq-est-singular-value-Lambda-Sigma} and the fact that $\| \Lambda^{-1} \|_{\op} \leq \frac{2}{\sqrt{n}}$ (from \eqref{eq-eigen-Lambda}), we have 
\begin{align*}
    \eqref{eq-goal-op-norm-2.10} \leq d^3 \widehat\sigma +(2\sigma^2+n^{-0.49}) \overset{\eqref{eq-def-widehat-omega}}{\leq} 2d^3 \widehat{\sigma} \,,
\end{align*}
completing the proof of the lemma.
\end{proof}
We write $$\mathcal E_1 = \bigcup_{\Psi_0\in\mathcal{S}_d} \left(\mathcal E_1(\Psi_0) \cap \{\|Q\Psi_0-R\|_{\Fop}\leq d^3\widehat\sigma\}\right)\,.$$
\begin{cor}\label{cor-diff-orthogonal}
    We have $\Pb(\mathcal E_1) = 1-o(1)$.
\end{cor}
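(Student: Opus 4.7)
The corollary is essentially a direct consequence of the three preceding results: Proposition~\ref{prop-diff-orthogonal-matrix} ($\Pb(\mathcal E_0)=1-o(1)$), Lemma~\ref{lem-Estar} ($\Pb(\mathcal E_\star)=1-o(1)$), and Lemma~\ref{lem-Estar-to-E1Psi_0} (on $\mathcal E_\star$, the event $\mathcal E_1(\Psi_0)$ holds for every $\Psi_0$ with $\|Q\Psi_0-R\|_{\Fop}\leq d^3\widehat\sigma$). So the plan is simply to chain these together via a union bound, and the corollary should follow with no further calculation.

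Concretely, I would argue as follows. First, I would work on the intersection $\mathcal E_0\cap\mathcal E_\star$, which by a union bound has probability $1-o(1)$. On $\mathcal E_0$, by definition there exists some (random) $\Psi_0\in\mathcal S_d$ with $\|Q\Psi_0-R\|_{\Fop}\leq d^3\widehat\sigma$. Applying Lemma~\ref{lem-Estar-to-E1Psi_0} on $\mathcal E_\star$ to this particular $\Psi_0$ gives that $\mathcal E_1(\Psi_0)$ also holds. Thus this $\Psi_0$ witnesses membership in $\bigcup_{\Psi_0\in\mathcal S_d}\bigl(\mathcal E_1(\Psi_0)\cap\{\|Q\Psi_0-R\|_{\Fop}\leq d^3\widehat\sigma\}\bigr)=\mathcal E_1$. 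Hence $\mathcal E_0\cap\mathcal E_\star\subseteq\mathcal E_1$, and therefore $\Pb(\mathcal E_1)\geq\Pb(\mathcal E_0\cap\mathcal E_\star)=1-o(1)$.

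There is really no obstacle here; the corollary is a packaging statement whose whole purpose is to make subsequent arguments cleaner by combining the almost-alignment of $Q$ and $R$ (from Proposition~\ref{prop-diff-orthogonal-matrix}) with the deterministic consequences on $U,V,\Lambda,\Sigma$ (from Lemma~\ref{lem-Estar-to-E1Psi_0}) into a single high-probability event.
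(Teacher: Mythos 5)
Your proposal is correct and follows exactly the same route as the paper, which simply states that the corollary follows by combining Proposition~\ref{prop-diff-orthogonal-matrix}, Lemma~\ref{lem-Estar}, and Lemma~\ref{lem-Estar-to-E1Psi_0}. You have merely made the routine union bound and the inclusion $\mathcal E_0\cap\mathcal E_\star\subseteq\mathcal E_1$ explicit, which is a reasonable way to fill in a proof the paper leaves to the reader.
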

\begin{proof}[Proof of Corollary~\ref{cor-diff-orthogonal}]
    This follows immediately by combining Proposition~\ref{prop-diff-orthogonal-matrix}, Lemma~\ref{lem-Estar} and Lemma~\ref{lem-Estar-to-E1Psi_0}.
\end{proof}
    

Now we shift our focus to the distance model. Recall \eqref{eq-goal-distance-model}, where we have
\begin{equation*}
    \widetilde A=\widetilde X\widetilde X^\top \mbox{ and }
    \widetilde B=\widetilde Y\widetilde Y^\top \,,
\end{equation*}
where $\widetilde X=(\operatorname{I}_n-\mathbf F)X$ and $\widetilde Y=(\operatorname{I}_n-\mathbf F)Y$.
In the same manner as in \eqref{eq-def-SVD}, we consider the singular value decompositions of $\widetilde X$ and $\widetilde Y$: 
\begin{equation}{\label{eq-def-SVD-distance}}
    \mathsf{SVD}(\widetilde X)=\widetilde U\widetilde \Lambda \widetilde Q^\top  \mbox{ and }  \mathsf{SVD}(\widetilde Y)=\widetilde V\widetilde \Sigma \widetilde R^\top\,.
\end{equation}

Similar to the dot-product model, the crucial input is to show that $\widetilde R$ and $\widetilde Q$ are ``close'' up to a multiplicative matrix in $\mathcal{S}_d$, as incorporated in the next proposition.
\begin{proposition}\label{prop-diff-othogonal-matrix-distance}
    For $\widetilde Q$ and $\widetilde R$ and any $\omega_n\to\infty$, with probability $1-o(1)$ there exists $\widetilde{\Psi}_0 \in \mathcal{S}_d$ such that
    \begin{equation}{\label{eq-closeness-tilde-Q-R}}
        \|\widetilde{Q} \widetilde{\Psi}_0-\widetilde{R} \|_{\operatorname{F}} \leq 2d^3 \widehat{\sigma} \,.
    \end{equation}
\end{proposition}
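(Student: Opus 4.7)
The plan is to mirror the proof of Proposition~\ref{prop-diff-orthogonal-matrix} with $X,Y,Z$ replaced by the column-centered matrices $\widetilde X=(\operatorname I_n-\mathbf F)X$, $\widetilde Y=(\operatorname I_n-\mathbf F)Y$ and $\widetilde Z=(\operatorname I_n-\mathbf F)Z$. Since $\operatorname I_n-\mathbf F$ is an orthogonal projection, one has $\widetilde X^\top\widetilde X=X^\top X-\tfrac{1}{n}(X^\top\mathbf 1)(X^\top\mathbf 1)^\top$ and likewise for $\widetilde Y^\top\widetilde Y$; because $X^\top\mathbf 1/\sqrt n$ is a standard Gaussian vector in $\mathbb R^d$, the rank-one correction has operator norm of order $d$ with probability $1-o(1)$. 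Consequently $\widetilde X^\top\widetilde X$ and $X^\top X$ (respectively $\widetilde Y^\top\widetilde Y$ and $Y^\top Y$) differ by $O(d)$ in operator norm.

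I then set $\widetilde{\mathbf X}:=\widetilde X^\top\widetilde X-(n-1)\operatorname I_d$ and $\widetilde{\mathbf Y}:=\widetilde Y^\top\widetilde Y-(n-1)(1+\sigma^2)\operatorname I_d$, which share their eigenvectors with $\widetilde X^\top\widetilde X$ and $\widetilde Y^\top\widetilde Y$ and hence carry the columns of $\widetilde Q,\widetilde R$. The Davis-Kahan theorem yields $|\sin\angle(\widetilde q_i,\widetilde r_i)|\leq 2\|\widetilde{\mathbf X}-\widetilde{\mathbf Y}\|_{\op}/\delta(\widetilde{\mathbf X})$. By the permutation invariance $\Pi^\top \mathbf F\Pi=\mathbf F$ I may take $\Pi^*=\operatorname I_n$ for the distributional calculation, in which case $\widetilde Y=\widetilde X+\sigma\widetilde Z$ and
\begin{equation*}
\widetilde{\mathbf Y}-\widetilde{\mathbf X}=\sigma\bigl(\widetilde X^\top\widetilde Z+\widetilde Z^\top\widetilde X\bigr)+\sigma^2\bigl(\widetilde Z^\top\widetilde Z-(n-1)\operatorname I_d\bigr)\,.
\end{equation*}
Each summand is then handled by the same second-moment plus Chebyshev argument used in the proof of Proposition~\ref{prop-diff-orthogonal-matrix}, producing $\|\widetilde{\mathbf X}-\widetilde{\mathbf Y}\|_{\op}\lesssim d\sigma\sqrt n\widehat\omega_n^{0.1}$ with probability $1-o(1)$.

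For the denominator, since $\widetilde{\mathbf X}$ and $\mathbf X$ (as defined in the proof of Proposition~\ref{prop-diff-orthogonal-matrix}) differ by a matrix of operator norm $O(d)$, the Wasserstein bound $W_1(\mathbf X,\sqrt n\mathsf G)\leq 3d$ established there upgrades to $W_1(\widetilde{\mathbf X},\sqrt n\mathsf G)=O(d)$. Combining with the GOE eigen-gap estimate from \cite{FTW19} and the min-max characterization of eigenvalues exactly as in Proposition~\ref{prop-diff-orthogonal-matrix} yields $\delta(\widetilde{\mathbf X})\geq \sqrt n/(2d\widehat\omega_n^{0.1})$ with probability $1-o(1)$. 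Feeding these into Davis-Kahan produces $|\sin\angle(\widetilde q_i,\widetilde r_i)|\lesssim d^2\sigma\sqrt{\widehat\omega_n}$, and then repeating the Cauchy-Schwarz and orthogonality manipulation from Claim~\ref{lem-bound-angle}, with $\widetilde\Psi_0=\operatorname{diag}(\operatorname{sgn}\langle\widetilde q_i,\widetilde r_i\rangle)_{i=1}^d$, gives $\|\widetilde Q\widetilde\Psi_0-\widetilde R\|_{\Fop}\leq 2d^3\widehat\sigma$ as claimed.

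The main obstacle is bookkeeping rather than new ideas: one must track the rank-one corrections induced by $\operatorname I_n-\mathbf F$ through both the numerator and denominator of the Davis-Kahan bound, and confirm that the extra $\sigma^2(n-1)\operatorname I_d$ subtraction in $\widetilde{\mathbf Y}$ cancels against $\sigma^2\widetilde Z^\top\widetilde Z$ up to a fluctuation of order $\sigma^2\sqrt n\ll \sigma\sqrt n$. The slight loosening from $d^3\widehat\sigma$ in the dot-product case to $2d^3\widehat\sigma$ here is exactly the budget needed to absorb these constant losses.
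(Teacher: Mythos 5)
Your argument is correct, but it follows a genuinely different route from the paper's. You apply Davis--Kahan directly to the pair $(\widetilde{\mathbf X},\widetilde{\mathbf Y})$, which means you must re-establish both the numerator bound $\|\widetilde{\mathbf X}-\widetilde{\mathbf Y}\|_{\operatorname{op}}\lesssim d\sigma\sqrt n\widehat\omega_n^{0.1}$ and, more substantially, the denominator bound $\delta(\widetilde{\mathbf X})\gtrsim\sqrt n/(d\widehat\omega_n^{0.1})$, the latter via a transferred Wasserstein estimate and the GOE small-gap result. The paper instead factors through the uncentered matrices: it applies Davis--Kahan to $(\widetilde{\mathbf X},\mathbf X)$ and $(\widetilde{\mathbf Y},\mathbf Y)$ separately, where the perturbation is the tiny rank-one correction $X^{\top}\mathbf F X$ of size $O(d\widehat\omega_n^{0.1})\ll\sigma\sqrt n$, yielding $\|\widetilde Q-Q\Psi_1\|_{\Fop},\|\widetilde R-R\Psi_2\|_{\Fop}\leq n^{-0.49}$, and then invokes Proposition~\ref{prop-diff-orthogonal-matrix} verbatim together with the triangle inequality and the conjugation $\widetilde\Psi_0=\Psi_1\Psi_0\Psi_2$. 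The paper's route is more modular: the $2d^3\widehat\sigma$ constant drops out trivially as $d^3\widehat\sigma+2n^{-0.49}$, whereas your route requires re-auditing the chain of constants through the GOE gap, the Wasserstein transfer, Davis--Kahan, and the $\sin$-to-Frobenius conversion to confirm they all fit within the doubled budget (they do, since the slack $\widehat\omega_n^{3/2}/\omega_n^2\to 0$ absorbs any $O(1)$ losses, but the bookkeeping is heavier). Your approach does have the merit of being self-contained in the centered variables, and the observation that $\Pi^{*\top}\mathbf F\Pi^*=\mathbf F$ to reduce to $\Pi^*=\operatorname I_n$ is clean and worth keeping explicit. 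One small inaccuracy: you assert $W_1(\widetilde{\mathbf X},\sqrt n\mathsf G)=O(d)$ ``since $\widetilde{\mathbf X}$ and $\mathbf X$ differ by a matrix of operator norm $O(d)$''---to legitimately upgrade a Wasserstein bound you should instead use $\mathbb E\|X^{\top}\mathbf F X\|_{\Fop}=d$ (the correction is rank one, so Frobenius equals operator norm and its expectation is exactly $d$), and then apply the triangle inequality in $W_1$; as stated the high-probability operator norm bound is not a coupling.
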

\begin{proof}
    Consider the centered matrices $\widetilde{\mathbf{X}}:=\widetilde X^\top \widetilde X-n\operatorname{I}_d$ and $\widetilde{\mathbf{Y}}:=\widetilde Y^\top\widetilde Y-n(1+\sigma^2)\operatorname{I}_d$. Let $\widetilde q_i,\widetilde r_i$ be the eigenvectors corresponding to the $i$-th largest eigenvalue of $\widetilde{\mathbf{X}}$ and $\widetilde{\mathbf{Y}}$, respectively. By the Markov's inequality, we have that with probability $1-o(1)$
    \begin{equation}\label{eq-diff-X-Xtilde}
        \|\widetilde{\mathbf{X}}-\mathbf X\|_{\operatorname{op}}=\|X^\top \mathbf F X\|_{\operatorname{op}} = \frac{1}{n}\| X^\top \mathbf{1} \|_2^2 \leq d \widehat{\omega}_n^{0.1} \,.
    \end{equation}
    Moreover by \eqref{eq-goal-spectral-gap} and by applying the Davis-Kahan theorem to $(\widetilde{\mathbf X}, \mathbf X)$, we have that with probability $1-o(1)$ (again recall $d=O(\log n)$)
    \begin{equation}
        \sin{\angle(\widetilde q_i,q_i)} \leq \frac{2d^2 \widehat{\omega}_n}{\sqrt{n}} \overset{\eqref{eq-def-widehat-omega}}{\leq} \frac{1}{ n^{0.495}} \mbox{ for } 1 \leq i \leq d \,.
    \end{equation}
    Thus, similar to how Proposition~\ref{prop-diff-orthogonal-matrix} was derived from Claim~\ref{lem-bound-angle}, we can show that with probability $1-o(1)$ there exists $\Psi_1 \in \mathcal{S}_d$ such that
    \begin{equation}\label{eq-diff-QQtilde}
        \|\widetilde Q-Q\Psi_1\|_{\operatorname{F}} \leq n^{-0.49}\,.
    \end{equation}
    Similarly with probability $1-o(1)$, there exists $\Psi_2\in\mathcal{S}_d$ such that 
    \begin{align}\label{eq-diff-RRtilde}
        \|\widetilde R-R\Psi_2\|_{\operatorname{F}}\leq n^{-0.49}\,.
    \end{align}
    Combining \eqref{eq-diff-QQtilde}, \eqref{eq-diff-RRtilde} with Proposition~\ref{prop-diff-orthogonal-matrix} yields that by defining $\widetilde{\Psi}_0 = \Psi_1 \Psi_0 \Psi_2$, with probability $1-o(1)$ we have
    \begin{align}
        &\| \widetilde{Q} \widetilde{\Psi}_0 - \widetilde{R} \|_{\operatorname{F}} = \| \widetilde{Q} \Psi_1 \Psi_0 \Psi_2 - \widetilde{R} \|_{\operatorname{F}} \nonumber \\
        \leq \ &\| (\widetilde{Q} \Psi_1 - Q) \Psi_0 \Psi_2 \|_{\operatorname{F}} + \| (Q \Psi_0 - R) \Psi_2 \|_{\operatorname{F}} + \|  \widetilde{R} - R \Psi_2 \|_{\operatorname{F}} \nonumber \\
        = \ &\| \widetilde{Q} \Psi_1 - Q \|_{\operatorname{F}} + \| Q \Psi_0 - R \|_{\operatorname{F}} + \|  \widetilde{R} - R \Psi_2 \|_{\operatorname{F}} \leq 2n^{-0.49} + d^3 \widehat{\sigma} \overset{\eqref{eq-def-widehat-omega}}{\leq} 2d^3 \widehat{\sigma} \,. \qedhere
    \end{align}
\end{proof}

For any $\widetilde\Psi_0 \in \mathcal S_d$, define $\widetilde{\mathcal E_1}(\widetilde{\Psi}_0)$ to be the intersection of the following two events:
\begin{align}
    &\|\widetilde{U}\widetilde{\Psi}_0-\widetilde{V} \|_{\operatorname{F}} \leq 4d^3 \widehat{\sigma} \,.\label{eq-diff-UV-dist}\\
        &\| \operatorname I_d-n \widetilde{Q} \widetilde\Lambda^{-1} \widetilde{\Psi}_0 \widetilde\Sigma^{-1} \widetilde{R}^\top \|_{\operatorname{op}} \leq 3d^3 \widehat{\sigma} \,.\label{eq-Id-diff-dist}
\end{align}
In addition, define the event $\widetilde{\mathcal E}_\star$ to be the intersection of the following events:
\begin{align}
    &\|X-\widetilde X\|_{\Fop}^2\leq d\widehat\omega_n\,,\label{eq-Fnorm-FX}\\
    &\sqrt{n}- 2d\widehat{\omega}_n \leq \min_{1\leq i\leq d} \widetilde\Lambda_{i,i} \leq \max_{1\leq i\leq d} \widetilde\Lambda_{i,i} \leq \sqrt{n}+ 2d\widehat{\omega}_n \,,\label{eq-singular-tildeX}\\
    &\sqrt{n(1+\sigma^2)}-2d\widehat{\omega}_n \leq \min_{1\leq i\leq d} \widetilde\Sigma_{i,i} \leq \max_{1\leq i\leq d} \widetilde\Sigma_{i,i} \leq \sqrt{n(1+\sigma^2)}+2d\widehat{\omega}_n \,.{\label{eq-singular-tildeY}} 
\end{align}
\begin{lemma}\label{lem-Etildestar}
    We have $\Pb(\widetilde{\mathcal{E}}^\star)=1-o(1)$.
\end{lemma}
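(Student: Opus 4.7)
The plan is to verify each of the three events in \eqref{eq-Fnorm-FX}, \eqref{eq-singular-tildeX}, \eqref{eq-singular-tildeY} separately and then take a union bound. The only nontrivial input is \eqref{eq-Fnorm-FX}, which is a direct moment computation; the singular value bounds for $\widetilde X$ and $\widetilde Y$ then fall out of Weyl's inequality combined with \eqref{eq-eigen-Lambda} and \eqref{eq-eigen-Sigma} already contained in $\mathcal E_\star$ (Lemma~\ref{lem-Estar}).

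For \eqref{eq-Fnorm-FX}, I would write $X-\widetilde X=\mathbf F X$ and note that every row of $\mathbf F X$ equals the row-average $\bar X^{\top}:=\tfrac{1}{n}\mathbf 1^{\top}X\in\mathbb R^d$, so
\[
\|X-\widetilde X\|_{\Fop}^2 \;=\; n\|\bar X\|_2^2 \;=\; \sum_{j=1}^d \tfrac{1}{n}\Big(\sum_{k=1}^n X_{k,j}\Big)^{\!2}
\]
is a sum of $d$ i.i.d.\ $\chi_1^2$ variables with mean $d$. Since $\widehat\omega_n\to\infty$, Markov's inequality gives $\|X-\widetilde X\|_{\Fop}^2\leq d\widehat\omega_n$ with probability $1-o(1)$.

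For \eqref{eq-singular-tildeX}, on the event \eqref{eq-Fnorm-FX} Weyl's inequality for singular values yields
\[
|\widetilde\Lambda_{i,i}-\Lambda_{i,i}| \;\leq\; \|\widetilde X-X\|_{\op} \;\leq\; \|\widetilde X-X\|_{\Fop} \;\leq\; \sqrt{d\widehat\omega_n} \;\leq\; d\widehat\omega_n,
\]
and combining with \eqref{eq-eigen-Lambda} from $\mathcal E_\star$ gives the desired interval $[\sqrt n-2d\widehat\omega_n,\,\sqrt n+2d\widehat\omega_n]$. For \eqref{eq-singular-tildeY}, I use $Y=\Pi^*X+\sigma Z$ together with the identity $\mathbf F\Pi^*=\mathbf F$ (which holds because $\mathbf 1^{\top}\Pi^*=\mathbf 1^{\top}$ for any permutation matrix) to write $Y-\widetilde Y=\mathbf F X+\sigma\mathbf F Z$. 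The same $\chi^2$ calculation applied to $Z$ gives $\|\mathbf F Z\|_{\Fop}^2\leq d\widehat\omega_n$ with high probability, so $\|Y-\widetilde Y\|_{\Fop}\leq(1+\sigma)\sqrt{d\widehat\omega_n}\leq 2\sqrt{d\widehat\omega_n}$, and Weyl together with \eqref{eq-eigen-Sigma} yields \eqref{eq-singular-tildeY}.

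No step here presents a real obstacle; the only point to check is that the slack $d\widehat\omega_n$ already present in \eqref{eq-eigen-Lambda}, \eqref{eq-eigen-Sigma} absorbs the centering perturbation of order $\sqrt{d\widehat\omega_n}$, which holds as soon as $d\widehat\omega_n\gtrsim 1$ (trivially true since $d\geq 5$ and $\widehat\omega_n\to\infty$). A union bound over the three events then yields $\Pb(\widetilde{\mathcal E}_\star)=1-o(1)$.
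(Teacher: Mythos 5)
Your proof is correct and follows essentially the same route as the paper: both bound $\|\mathbf F X\|_{\Fop}^2$ by a $\chi^2$/Markov argument and then transfer the perturbation to the singular values, the only cosmetic difference being that you invoke Weyl's inequality directly for singular values of $X$ vs.\ $\widetilde X$, while the paper passes through $\|X^{\top}X-\widetilde X^{\top}\widetilde X\|_{\op}$ first.
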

\begin{proof}[Proof of Lemma~\ref{lem-Etildestar}]
    For \eqref{eq-Fnorm-FX}, we have that with probability $1-o(1)$ (recall that $\widetilde{X}=(\mathrm{I}_n - \mathbf{F})X$)
    \begin{equation*}
        \big\| X-\widetilde{X} \big\|_{\operatorname{F}}^2 =\big\|\mathbf{F}X\big\|^2_{\operatorname{F}} =\frac{1}{n} \sum_{j=1}^d \Big(\sum_{k=1}^n X_{k,j} \Big)^2 \leq d \widehat{\omega}_n \,,
    \end{equation*}
    where the inequality follows from Markov's inequality. In addition, we also have
    \begin{equation*}
        \big\| X^{\top}X - \widetilde{X}^{\top} \widetilde{X} \big\|_{\operatorname{op}} = \big\| (\mathbf{F} X)^{\top} \mathbf{F} X \big\|_{\operatorname{op}} \leq \big\| \mathbf{F}X \big\|^2_{\operatorname{F}} \leq d \widehat{\omega}_n \,.
    \end{equation*}
    By recalling that $\Lambda_{i,i}$ and $\widetilde{\Lambda}_{i,i}$ are singular values of $X$ and $\widetilde{X}$ respectively, we conclude that $|\Lambda_{i,i}-\widetilde{\Lambda}_{i,i}| \leq d \widehat{\omega}_n$. Combined with \eqref{eq-eigen-Lambda}, this yields \eqref{eq-singular-tildeX}. In a similar manner we can derive that \eqref{eq-singular-tildeY} holds with probability $1-o(1)$. This completes the proof of the lemma.
\end{proof}

\begin{lemma}\label{lem-Etildestar-to-Etildepsi}
       Assume that $\widetilde{\mathcal E}_\star$ holds. Then for any $\widetilde\Psi_0$ such that $\|\widetilde Q\widetilde\Psi_0-\widetilde R\|_{\Fop}\leq 2d^3\widehat\sigma $, we have that $\widetilde{\mathcal E}_1(\widetilde\Psi_0)$ holds.
   \end{lemma}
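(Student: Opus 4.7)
The plan is to mirror the proof of Lemma~\ref{lem-Estar-to-E1Psi_0}, exploiting the parallel structure $\widetilde U = \widetilde X \widetilde Q \widetilde \Lambda^{-1}$ and $\widetilde V = \widetilde Y \widetilde R \widetilde \Sigma^{-1}$ coming from \eqref{eq-def-SVD-distance}, together with the identity $\widetilde Y = \widetilde X + \sigma \widetilde Z$ where $\widetilde Z := (\operatorname I_n - \mathbf F)Z$ (taking $\Pi^*=\operatorname I_n$ without loss of generality by the row-exchangeability of $X$ and $Z$).

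For \eqref{eq-diff-UV-dist}, I would apply the same three-term triangle decomposition as in \eqref{eq-matrix-devide-3-parts}:
\begin{align*}
\|\widetilde U \widetilde \Psi_0 - \widetilde V\|_{\operatorname{F}} &\leq \|\widetilde X\|_{\operatorname{F}}\,\|\widetilde \Lambda^{-1} - n^{-1/2}\operatorname I_d\|_{\operatorname{op}} + \|\widetilde Y\|_{\operatorname{F}}\,\|\widetilde \Sigma^{-1} - n^{-1/2}\operatorname I_d\|_{\operatorname{op}} \\
&\quad + n^{-1/2}\|\widetilde X \widetilde Q \widetilde \Psi_0 - \widetilde Y \widetilde R\|_{\operatorname{F}}\,.
\end{align*}
From \eqref{eq-singular-tildeX} and \eqref{eq-singular-tildeY} the inverse-singular-value bounds take the same form as in \eqref{eq-est-singular-value-Lambda-Sigma}, and $\|\widetilde X\|_{\operatorname{F}}\leq 2\sqrt{nd}$ and $\|\widetilde Y\|_{\operatorname{F}}\leq 4\sqrt{nd}$ follow directly from the same singular-value bounds via $\|\widetilde X\|_{\operatorname{F}}^2 = \sum_i \widetilde\Lambda_{i,i}^2$. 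These handle the first two summands just as in \eqref{eq-bound-matrix-part-1,2}. The third summand I would split via $\widetilde Y - \widetilde X = \sigma \widetilde Z$ into $n^{-1/2}\|\widetilde X\|_{\operatorname{op}}\|\widetilde Q \widetilde \Psi_0 - \widetilde R\|_{\operatorname{F}} + n^{-1/2}\sigma\|\widetilde Z\|_{\operatorname{F}}$; the first piece is at most $4d^3\widehat\sigma$ by the hypothesis $\|\widetilde Q\widetilde\Psi_0-\widetilde R\|_{\operatorname{F}}\leq 2d^3\widehat\sigma$ together with $\|\widetilde X\|_{\operatorname{op}} \leq 2\sqrt n$ (from \eqref{eq-singular-tildeX}), and the second is $O(\sigma\sqrt d) \ll d^3\widehat\sigma$ using $\|\widetilde Z\|_{\operatorname{F}}\leq \|Z\|_{\operatorname{F}}\leq 2\sqrt{nd}$; this last bound we may freely import from $\mathcal E_\star$ since $\Pb(\mathcal E_\star)=1-o(1)$ by Lemma~\ref{lem-Estar}.

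For \eqref{eq-Id-diff-dist}, I would use exactly the same triangle split as in \eqref{eq-goal-op-norm-2.10}:
$$\|\operatorname I_d - n\widetilde Q \widetilde \Lambda^{-1}\widetilde \Psi_0 \widetilde \Sigma^{-1}\widetilde R^\top\|_{\operatorname{op}} \leq \|\widetilde R - \widetilde Q \widetilde \Psi_0\|_{\operatorname{op}} + \sqrt n\|\widetilde \Lambda^{-1} - n^{-1/2}\operatorname I_d\|_{\operatorname{op}} + \sqrt n\|\widetilde \Lambda^{-1}\|_{\operatorname{op}}\|\widetilde \Sigma^{-1} - n^{-1/2}\operatorname I_d\|_{\operatorname{op}}\,,$$
where the first piece is $\leq 2d^3\widehat\sigma$ by hypothesis and the remaining two are $o(d^3\widehat\sigma)$ by the $\widetilde{\mathcal E}_\star$ inputs (together with $\|\widetilde\Lambda^{-1}\|_{\operatorname{op}} \leq 2/\sqrt n$, again from \eqref{eq-singular-tildeX}). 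The main (minor) obstacle is purely bookkeeping: verifying that the slightly looser $2d^3\widehat\sigma$ input on $\|\widetilde Q\widetilde\Psi_0-\widetilde R\|_{\operatorname{F}}$ (versus the $d^3\widehat\sigma$ in the dot-product case) propagates cleanly into the stated $4d^3\widehat\sigma$ and $3d^3\widehat\sigma$ in \eqref{eq-diff-UV-dist} and \eqref{eq-Id-diff-dist}, which is precisely the extra unit of slack that was built into the definition of $\widetilde{\mathcal E}_1(\widetilde\Psi_0)$ to absorb exactly this loss.
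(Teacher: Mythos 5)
Your decomposition and route are exactly the paper's: the same three-term triangle split mirroring \eqref{eq-matrix-devide-3-parts}, the same split of the third summand via $\widetilde Y - \widetilde X = \sigma\widetilde Z$, and the same triangle split as \eqref{eq-goal-op-norm-2.10} for \eqref{eq-Id-diff-dist}. There is, however, a genuine bookkeeping slip in your treatment of \eqref{eq-diff-UV-dist} that you cannot wave away as "propagating cleanly": you bound the piece
$n^{-1/2}\|\widetilde X\|_{\operatorname{op}}\|\widetilde Q\widetilde\Psi_0-\widetilde R\|_{\operatorname{F}}$
by $n^{-1/2}\cdot 2\sqrt{n}\cdot 2d^3\widehat\sigma = 4d^3\widehat\sigma$, which already exhausts the entire target $4d^3\widehat\sigma$, leaving no room for the other positive contributions (the two $\approx 0.1 d^3\widehat\sigma$ summands from the $\Lambda^{-1},\Sigma^{-1}$ perturbations, plus the $O(\sigma\sqrt d)$ noise term). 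With your crude bound $\|\widetilde X\|_{\operatorname{op}}\le 2\sqrt n$ the total lands above $4d^3\widehat\sigma$, so the proof as written does not close. The fix is to read \eqref{eq-singular-tildeX} more carefully: it gives $\|\widetilde X\|_{\operatorname{op}}\le \sqrt n + 2d\widehat\omega_n = \sqrt n\,(1+o(1))$, so this piece is really $2d^3\widehat\sigma\,(1+o(1))$, which leaves ample room for the remaining terms and is exactly what the paper's intermediate estimate $3d^3\widehat\sigma + 3n^{-0.49}$ for the whole third summand reflects. (The same point applies by analogy to the paper's dot-product estimate \eqref{eq-bound-matrix-part-3}, where the looser factor $2$ was affordable only because the input there is $d^3\widehat\sigma$ rather than $2d^3\widehat\sigma$; doubling the input means you must tighten the operator-norm factor.) Your proof of \eqref{eq-Id-diff-dist} is fine and matches the paper, as does your observation that $\|Z\|_{\operatorname{F}}\le 2\sqrt{nd}$ must be imported from $\mathcal E_\star$ (which the paper itself does implicitly).
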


\begin{proof}
    To show \eqref{eq-diff-UV-dist}, by \eqref{eq-singular-tildeX} and \eqref{eq-singular-tildeY}, we can bound $\| \widetilde U \widetilde\Psi_0 - \widetilde V \|_{\operatorname{F}}$ by the triangle inequality
    \begin{align*}
        &\| \widetilde U \widetilde\Psi_0 - \widetilde V \|_{\operatorname{F}} = \| \widetilde X\widetilde Q \widetilde\Lambda^{-1} \widetilde\Psi_0 - \widetilde Y\widetilde R \widetilde\Sigma^{-1}  \|_{\operatorname{F}} \\
        \leq \ & \frac{1}{\sqrt{n}} \| \widetilde X\widetilde Q \widetilde\Psi_0 - \widetilde Y\widetilde R \|_{\operatorname{F}} + \frac{3d\widehat{\omega}_n}{n} \| \widetilde X \widetilde Q \widetilde\Psi_0 \|_{\operatorname{F}} + \frac{3\sigma^2\sqrt{n}+3d\widehat{\omega}_n}{n} \| \widetilde Y \widetilde R \|_{\operatorname{F}} \,.
    \end{align*}
    Following the proof of Lemma~\ref{lem-Estar-to-E1Psi_0} (just the same as how we bounded \eqref{eq-matrix-devide-3-parts}), we can get that the right hand side in the preceding display is bounded by
    $$ (3d^3 \widehat{\sigma} +3n^{-0.49}) +n^{-0.495}+(3\sigma^2\sqrt{d}+n^{-0.495}) \overset{\eqref{eq-def-widehat-omega}}{\leq} 4d^3 \widehat{\sigma} \,, $$
    which yields \eqref{eq-diff-UV-dist}.
    Finally, similar to the proof of Lemma~\ref{lem-Estar-to-E1Psi_0}, we can deduce from the triangle inequality that
    \begin{align}
        &\|\operatorname{I}_d-n \widetilde{Q}  \widetilde{\Lambda}^{-1} \widetilde\Psi_0 \widetilde{\Sigma}^{-1} \widetilde{R}^\top\|_{\op} \nonumber \\
        \leq \ & \big\| \widetilde R - \widetilde Q \widetilde \Psi_0 \big\|_{\operatorname{op}} + \sqrt{n} \big\| \widetilde{\Lambda}^{-1} - \frac{1}{\sqrt{n}} \operatorname{I}_d \big \|_{\operatorname{op}} + \sqrt{n} \big\| \widetilde{\Lambda}^{-1} \big\|_{\op} \big\| \widetilde{\Sigma}^{-1} - \frac{1}{\sqrt{n}} \operatorname{I}_d \big\|_{\operatorname{op}} \,. \label{eq-goal-op-norm-distance-2.25}
    \end{align}
    Following the previous proof as how we bounded \eqref{eq-goal-op-norm-2.10}, we obtain
    \begin{align*}
        \eqref{eq-goal-op-norm-distance-2.25} \leq & 2d^3 \widehat{\sigma} +2n^{-0.49} +\sigma^2 +n^{-0.495} \leq 3d^3 \widehat{\sigma} \,,
    \end{align*}
    completing the proof of the lemma.
\end{proof}
We write $$\widetilde{\mathcal E}_1 =\bigcup_{\widetilde\Psi_0\in\mathcal{S}_d} \left(\widetilde{\mathcal E_1}(\widetilde\Psi_0) \cap \{\|\widetilde Q\widetilde \Psi_0-\widetilde R\|_{\Fop}\leq 2d^3\widehat\sigma\}\right)\,.$$
\begin{cor}\label{cor-diff-UV-distance-model}
    We have $\Pb(\widetilde{\mathcal E}_1)=1-o(1)$.
\end{cor}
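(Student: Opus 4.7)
The plan is to combine the three ingredients already at hand in exactly the same pattern used in the proof of Corollary~\ref{cor-diff-orthogonal}. The distance-model setting parallels the dot-product setting: Proposition~\ref{prop-diff-othogonal-matrix-distance} is the analogue of Proposition~\ref{prop-diff-orthogonal-matrix}, Lemma~\ref{lem-Etildestar} is the analogue of Lemma~\ref{lem-Estar}, and Lemma~\ref{lem-Etildestar-to-Etildepsi} is the analogue of Lemma~\ref{lem-Estar-to-E1Psi_0}. The only nontrivial content has been absorbed into these three statements, so what remains is a one-line union-bound argument.

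Concretely, I would first invoke Proposition~\ref{prop-diff-othogonal-matrix-distance} to produce an event $\mathcal F_1$ of probability $1-o(1)$ on which there exists $\widetilde\Psi_0 \in \mathcal S_d$ satisfying $\|\widetilde Q\widetilde\Psi_0-\widetilde R\|_{\Fop}\le 2d^3\widehat\sigma$. Next, by Lemma~\ref{lem-Etildestar} we have $\Pb(\widetilde{\mathcal E}_\star)=1-o(1)$. By a union bound, $\Pb(\mathcal F_1 \cap \widetilde{\mathcal E}_\star)=1-o(1)$.

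On $\mathcal F_1\cap\widetilde{\mathcal E}_\star$, pick a (measurable) selection of $\widetilde\Psi_0$ provided by Proposition~\ref{prop-diff-othogonal-matrix-distance}; then Lemma~\ref{lem-Etildestar-to-Etildepsi} guarantees that both \eqref{eq-diff-UV-dist} and \eqref{eq-Id-diff-dist} hold for this $\widetilde\Psi_0$, i.e., $\widetilde{\mathcal E}_1(\widetilde\Psi_0)$ occurs. Together with the norm bound on $\widetilde Q\widetilde\Psi_0-\widetilde R$ (which is precisely the defining condition of the set in the union), this means that $\widetilde{\mathcal E}_1 \supseteq \mathcal F_1\cap\widetilde{\mathcal E}_\star$. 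Hence $\Pb(\widetilde{\mathcal E}_1)=1-o(1)$, which is the desired conclusion.

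There is no substantive obstacle to overcome here; all the analytic work — controlling $\widetilde{\mathbf X}-\mathbf X$, transferring the Davis-Kahan bound from the dot-product setting, and redoing the triangle-inequality decompositions for $\|\widetilde U\widetilde\Psi_0-\widetilde V\|_{\Fop}$ and $\|\mathrm I_d - n\widetilde Q\widetilde\Lambda^{-1}\widetilde\Psi_0\widetilde\Sigma^{-1}\widetilde R^\top\|_{\op}$ — has already been discharged in Proposition~\ref{prop-diff-othogonal-matrix-distance}, Lemma~\ref{lem-Etildestar}, and Lemma~\ref{lem-Etildestar-to-Etildepsi}. The corollary is therefore just the organizational statement that packages these facts together, mirroring verbatim the logic by which Corollary~\ref{cor-diff-orthogonal} was deduced from Proposition~\ref{prop-diff-orthogonal-matrix}, Lemma~\ref{lem-Estar}, and Lemma~\ref{lem-Estar-to-E1Psi_0}.
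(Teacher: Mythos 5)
Your proposal is correct and matches the paper's proof exactly: the paper also derives Corollary~\ref{cor-diff-UV-distance-model} by combining Proposition~\ref{prop-diff-othogonal-matrix-distance}, Lemma~\ref{lem-Etildestar}, and Lemma~\ref{lem-Etildestar-to-Etildepsi} via the same union-bound and set-inclusion reasoning you spell out. Your write-up is just a more explicit version of the paper's one-line argument.
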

\begin{proof}[Proof of Corollary~\ref{cor-diff-UV-distance-model}]
    This follows immediately by combining Proposition~\ref{prop-diff-othogonal-matrix-distance}, Lemma~\ref{lem-Etildestar} and Lemma~\ref{lem-Etildestar-to-Etildepsi}.
\end{proof}
With these results in hand, we are prepared to prove the main theorems in the following subsections.

\subsection{Proofs of Almost Exact Recovery}
In this subsection we prove Theorems~\ref{thm-almost-exact-recovery} and \ref{thm-almost-exact-recovery-distance}.
Without loss of generality, we may assume that $\Pi^*=\operatorname{I}_n$. Recall that we assumed $\sigma=n^{-1/d} d^{-3}/\omega_n$ and also recall the definition of $\widehat{\omega}_n$ in \eqref{eq-def-widehat-omega}. Define 
\begin{equation}{\label{eq-direct-form-epsilon}}
    \epsilon:=\frac{50}{ d \log [( \sigma^{-1}d^{-3} n^{-1/d} \wedge \log n ) ] }=\frac{ 25 }{ d \log (\widehat\omega_n) } \,.
\end{equation}
 Define the event 
\begin{equation}{ \label{eq-def-good-event-1} }
    \mathcal E_2 = \Big\{ \|X\|_{\operatorname{F}}, \|Y\|_{\operatorname{F}}, \|Z\|_{\operatorname{F}} \leq 2\sqrt{nd} \Big\} \,. 
\end{equation}
In addition, we define
\begin{equation}{\label{eq-def-mathfrak-S-n,epsilon}}
    \mathfrak S_{n,\epsilon}:=\{\Pi\in\mathfrak S_n:d_{\operatorname{H}}(\Pi,\operatorname{I}_n)\geq\epsilon n\}\,,
\end{equation}
and then define (recall the definition of $\widehat{\sigma}$ in \eqref{eq-def-widehat-omega})
\begin{equation}{\label{eq-def-good-event-2}}
    \mathcal E_3 = \Big\{ \|\Pi U-U \Psi\|_{\operatorname{F}} \geq 8.5d^3 \widehat{\sigma}  \,, \forall \ \Pi \in \mathfrak{S}_{n,\epsilon}\text{ and } \Psi \in \mathcal{S}_d \Big\} \,.
\end{equation}
\begin{lemma}\label{lem-proximity-U-Z}
    Recall the definition of $U$ in \eqref{eq-def-SVD} and consider an $n\!*\!d$ matrix $\mathsf Z$ where the entries of $\mathsf Z$ are given by i.i.d. standard normal variables. We have that there exists $\mathsf U\overset{d}{=}U$ such that with probability $1-o(1)$
    \begin{equation}
        \big\| \mathsf U-\frac{1}{\sqrt{n}}\mathsf Z \big\|_{\operatorname{F}} \leq 8d^2 n^{-0.49}\,.\notag
    \end{equation}
\end{lemma}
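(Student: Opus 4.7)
The plan is to construct the coupling via the polar decomposition by setting
$$\mathsf U := \mathsf Z(\mathsf Z^\top \mathsf Z)^{-1/2}\,,$$
which defines $\mathsf U$ as a measurable function of $\mathsf Z$ (well-defined since the Gaussian matrix $\mathsf Z$ has full column rank a.s.). The proof then splits into two tasks: verifying $\mathsf U \overset{d}{=} U$, and establishing the Frobenius bound on a $1-o(1)$ event.

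For the distributional identity, since $\mathsf Z \overset{d}{=} X$ it suffices to show that $X(X^\top X)^{-1/2} \overset{d}{=} U$. Plugging the SVD $X = U\Lambda Q^\top$ from \eqref{eq-def-SVD} into $X(X^\top X)^{-1/2}$ gives $U Q^\top$. Both $U$ and $U Q^\top$ take values in the Stiefel manifold $\{M \in \mathbb R^{n \times d} : M^\top M = I_d\}$. By rotational invariance of the standard Gaussian, each of their distributions is invariant under left multiplication by the orthogonal group $\operatorname{O}(n)$: for $U Q^\top = X(X^\top X)^{-1/2}$ this is immediate from the formula, and for $U$ it follows since left singular vectors of $OX$ are $OU$. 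Because $\operatorname{O}(n)$ acts transitively on this Stiefel manifold, there is a unique $\operatorname{O}(n)$-invariant probability measure (the Haar measure), and both laws must coincide with it.

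For the Frobenius bound, I rewrite
$$\mathsf U - \frac{1}{\sqrt n}\mathsf Z = \mathsf Z\Bigl((\mathsf Z^\top \mathsf Z)^{-1/2} - \frac{1}{\sqrt n} I_d\Bigr)$$
and apply $\|AB\|_{\operatorname{F}} \leq \|A\|_{\operatorname{F}} \|B\|_{\operatorname{op}}$. The two factors are controlled by the same tools used in Lemma~\ref{lem-Estar}: Bernstein's inequality yields $\|\mathsf Z\|_{\operatorname{F}} \leq 2\sqrt{nd}$ with probability $1-o(1)$, while \cite[Theorem 4.6.1]{Vershynin18} places all singular values of $\mathsf Z$ in $[\sqrt n - d\widehat\omega_n,\, \sqrt n + d\widehat\omega_n]$ w.h.p., so that by the computation in \eqref{eq-est-singular-value-Lambda-Sigma} one has $\|(\mathsf Z^\top \mathsf Z)^{-1/2} - \tfrac{1}{\sqrt n} I_d\|_{\operatorname{op}} \leq 2 d \widehat\omega_n / n$. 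Multiplying these produces $\|\mathsf U - \tfrac{1}{\sqrt n}\mathsf Z\|_{\operatorname{F}} \leq 4 d^{3/2} \widehat\omega_n/\sqrt n$, which by the bound $\widehat\omega_n \leq \sqrt{\log n}$ from \eqref{eq-def-widehat-omega} is at most $8 d^2 n^{-0.49}$ for all sufficiently large $n$. The only conceptual step is choosing the polar-decomposition coupling in the first place; once that is fixed, both the distributional identity and the quantitative estimate reduce to tools already developed in Section~\ref{subsection:comparison-SVD}.
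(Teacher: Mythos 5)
Your proof is correct, and it takes a genuinely different and arguably cleaner route than the paper. Both proofs produce $\mathsf U$ by orthonormalizing $\mathsf Z$, but the paper uses the QR/Gram--Schmidt factorization $\mathsf Z = \mathsf U \mathsf L$ (citing Stewart~\cite{Stewart80} for the distributional claim), whereas you use the polar factor $\mathsf U = \mathsf Z(\mathsf Z^\top\mathsf Z)^{-1/2}$ and argue the law is Haar on the Stiefel manifold directly from $\operatorname{O}(n)$-equivariance and uniqueness of the invariant measure. The bigger payoff is in the quantitative part: the identity $\mathsf U - \tfrac{1}{\sqrt n}\mathsf Z = \mathsf Z\big((\mathsf Z^\top\mathsf Z)^{-1/2} - \tfrac{1}{\sqrt n}\mathrm I_d\big)$ reduces everything to the singular-value concentration already recorded in \eqref{eq-eigen-Lambda}--\eqref{eq-est-singular-value-Lambda-Sigma} together with one use of $\|AB\|_{\Fop}\leq\|A\|_{\Fop}\|B\|_{\op}$, whereas the paper runs a somewhat tedious induction over the Gram--Schmidt recursion to control $\|\mathsf U - \bar{\mathsf Z}\|_{\Fop}$ column by column. (As a side remark, the polar factor is in fact the nearest orthonormal-column matrix to $\tfrac{1}{\sqrt n}\mathsf Z$ in Frobenius norm, so your coupling is the canonical optimal one.) The one imprecision I would flag is that the SVD factor $U$ is only defined up to column sign flips, so the assertion that its law is left-$\operatorname{O}(n)$-invariant (hence Haar) tacitly assumes a sign convention compatible with equivariance; however, the paper's own proof glosses over the same point, and the issue is harmless because the downstream use of this lemma in $\mathcal E_3$ is invariant under $U\mapsto U\Psi$ for $\Psi\in\mathcal S_d$.
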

\begin{proof}
    Denote $\mathsf{Z}=[\mathsf{z}_1, \ldots, \mathsf{z}_d]$. By Bernstein's inequality we have  
\begin{equation}\label{eq-append-concentration}
    \begin{split}
    &\Pb\big( |\langle\mathsf{z}_i,\mathsf{z}_j\rangle|\leq n^{0.51}, \forall \ 1\leq i\neq j\leq d \big)=1-o(1) \\
    \mbox{and }&\Pb\big( n/2\leq \|\mathsf{z}_i\|^2\leq 2n, \forall \ 1\leq i\leq d \big)=1-o(1)\,,
    \end{split}
\end{equation}
and thus we may we assume without loss of generality that the events described in \eqref{eq-append-concentration} hold in what follows. Denote $\mathsf{Z} = \mathsf{U} \mathsf{L}$ as the QR-decomposition of $\mathsf{Z}$. We first claim we have $\mathsf{U}\overset{(d)}{=}U$. To this end, define $\mathsf{Z}_{\mathsf{ext}}=[ \mathsf{z}_{d+1}, \ldots, \mathsf{z}_n ]$ to be an $n\!*\!(n-d)$ matrix whose entries are i.i.d.\ standard normal variables that are independent of $\mathsf{Z}$. Consider the QR-decomposition for this $n\!*\!n$ extended matrix as follows:
\begin{equation*}
    \begin{pmatrix} \mathsf{Z} & \mathsf{Z}_{\mathsf{ext}} \end{pmatrix} = 
    \begin{pmatrix}  \mathsf U & \mathsf U_{\mathsf{ext}} \end{pmatrix} 
    \begin{pmatrix}
        \mathsf L & \mathsf L_{\mathsf{ext}^1}\\
        \mathsf 0 & \mathsf L_{\mathsf{ext}^2}
    \end{pmatrix} \,.
\end{equation*}
By \cite[Theorem 3.2]{Stewart80} we have that $(\mathsf U,\mathsf U_{\mathsf{ext}})$ is uniformly distributed on $\mathsf{SO}(n)$, where $\mathsf{SO}(n)$ is the set of $n\!*\!n$ orthogonal matrices. This shows that $\mathsf U\overset{(d)}{=}U$. Now we bound $\| \mathsf U - \frac{1}{\sqrt{n}} \mathsf Z \|_{\Fop}$.
Let $\Bar{\mathsf{Z}}:=\big[ \Bar{\mathsf z}_1,\cdots,\Bar{\mathsf z}_d \big]= \big[\frac{\mathsf z_1}{\Vert \mathsf z_1\Vert_2},\cdots,\frac{\mathsf z_d}{\Vert \mathsf z_d\Vert_2} \big]$. By the triangular inequality, we have
\begin{equation}\label{eq-F-norm-relaxation-1}
    \big\| \mathsf{U} - \frac{1}{\sqrt{n}} \mathsf Z \big\|_{\operatorname{F}} \leq \big\| \mathsf{U} - \Bar{\mathsf Z} \big\|_{\operatorname{F}} + \big\| \Bar{\mathsf Z} - \frac{1}{\sqrt{n}} \mathsf Z \big\|_{\operatorname{F}}\,.
\end{equation}
Standard concentration result yields that with probability $1-o(1)$
\begin{equation}{\label{eq-F-norm-relaxation-2}}
    \big\| \Bar{\mathsf Z} - \frac{1}{\sqrt{n}} \mathsf Z \big\|_{\operatorname{F}}^2 = \sum_{i=1}^{d} \big\| \Bar{\mathsf{z}}_i - \frac{1}{\sqrt{n}} \mathsf{z}_i \big\|^2 = \sum_{i=1}^{d} \Big( 1-\frac{\| \mathsf{z}_i \|}{\sqrt{n}} \Big)^2 \leq \frac{d}{n^{0.99}} \,.
\end{equation}
It remains to bound $\| \mathsf{U} - \Bar{\mathsf Z} \|_{\operatorname{F}}$.
Note that $\mathsf U$ can be constructed by applying Gram–Schmidt process to $\Bar{\mathsf Z}$ as follows: writing $\mathsf{U}=[\mathsf{u}_1,\ldots,\mathsf{u}_d]$, then for $i\geq 0$ we inductively have that
\begin{equation}{\label{eq-def-mathsf-u}}
    \mathsf{u}_{i+1} = \frac{ \Bar{\mathsf z}_{i+1}-\sum_{j=1}^i \langle \Bar{\mathsf z}_{i+1}, \mathsf{u}_j \rangle \mathsf{u}_j }{ \| \Bar{\mathsf z}_{i+1} - \sum_{j=1}^i \langle \Bar{\mathsf z}_{i+1},\mathsf{u}_j \rangle \mathsf{u}_j\| }\,.
\end{equation}
In light of the preceding inductive relation, we next prove by induction on $k$ that the following holds for all $1\leq k<d$ 
\begin{equation} \label{eq-induc-E2} 
|\langle \mathsf{u}_k, \Bar{\mathsf z}_l \rangle| \leq 2n^{-0.49}  \mbox{ for all } l \in (k, d].  
\end{equation}
The case of $k=1$ is evident based on our assumption from \eqref{eq-append-concentration}. Suppose that \eqref{eq-induc-E2} holds for some $k<d$ and we wish to prove by induction that \eqref{eq-induc-E2} holds also for $k+1$. To this end, we consider each $l\in(k+1,d]$ and we get from \eqref{eq-def-mathsf-u} and our assumption $d=O(\log n)$ that
\begin{align}
        |\langle \mathsf{u}_{k+1}, \Bar{\mathsf z}_l \rangle| &= \frac{ \big| \big\langle \Bar{\mathsf z}_l,\Bar{\mathsf z}_{k+1}-\sum_{j=1}^{k}\langle \Bar{\mathsf z}_{k+1},\mathsf u_j\rangle \mathsf u_j \big\rangle \big| }{ \|\Bar{\mathsf z}_{k+1}-\sum_{j=1}^{k}\langle \Bar{\mathsf z}_{k+1},\mathsf u_j\rangle \mathsf u_j\| } \leq \frac{ | \langle \Bar{\mathsf z}_l,\Bar{\mathsf z}_{k+1} \rangle | + \sum_{j=1}^{k} | \langle \Bar{\mathsf z}_{k+1},\mathsf u_j \rangle | \cdot | \langle \Bar{\mathsf z}_l, \mathsf u_j \rangle | }{ \|\Bar{\mathsf z}_{k+1}\| - \sum_{j=1}^{k} |\langle \Bar{\mathsf z}_{k+1},\mathsf u_j\rangle| \cdot \| \mathsf u_j\| } \nonumber \\
        &\leq \frac{n^{-0.49}+k(2n^{-0.49})^2}{1-2kn^{-0.49}}\leq 2n^{-0.49} \,,
\end{align}
where the second inequality follows from \eqref{eq-append-concentration} and the induction hypothesis.
This shows that for all $1\leq k\leq d-1$, \eqref{eq-induc-E2} is true. Provided with this fact, we have $\mathsf{u}_1=\Bar{\mathsf{z}}_1$ and for $1\leq i<d$ (recall that $\| \mathsf{u}_i \|=\| \Bar{\mathsf{z}}_i \|=1$)
\begin{align*}
    \| \mathsf{u}_{i+1} - \Bar{\mathsf{z}}_{i+1} \| & \overset{\eqref{eq-def-mathsf-u}}{\leq} \Big| 1-\frac{ 1 }{ \| \Bar{\mathsf z}_{i+1} - \sum_{j=1}^i \langle \Bar{\mathsf z}_{i+1},\mathsf{u}_j \rangle \mathsf{u}_j\| } \Big| + \sum_{j=1}^{i} \frac{ |\langle \Bar{\mathsf z}_{i+1}, \mathsf{u}_j \rangle| }{ \| \Bar{\mathsf z}_{i+1} - \sum_{j=1}^i \langle \Bar{\mathsf z}_{i+1},\mathsf{u}_j \rangle \mathsf{u}_j\| }  \\
    &\leq \Big| 1- \frac{ 1 }{ 1 - \sum_{j=1}^{i} n^{-0.49} } \Big| + \sum_{j=1}^{i} \frac{ n^{-0.49} }{ 1- \sum_{j=1}^{i} n^{-0.49} } \leq 4d n^{-0.49} \,.
\end{align*}
Thus,
\begin{equation}{ \label{eq-F-norm-relaxation-3} }
    \| \mathsf{U} - \Bar{\mathsf{Z}} \|_{\operatorname{F}}^2 = \sum_{i=1}^{d} \| \mathsf{u}_i - \Bar{\mathsf{z}}_i \|^2 \leq 16d^3 n^{-0.98}\,.
\end{equation}
Plugging \eqref{eq-F-norm-relaxation-2} and \eqref{eq-F-norm-relaxation-3} into \eqref{eq-F-norm-relaxation-1} yields Lemma~\ref{lem-proximity-U-Z}.
\end{proof}
\begin{lemma}\label{lem-typical-E3}
    We have that $\Pb(\mathcal{E}_3)=1-o(1)$.
\end{lemma}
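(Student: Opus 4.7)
The plan is to transfer the desired bound from $U$ to the i.i.d.\ Gaussian matrix $\mathsf{Z}/\sqrt{n}$ supplied by Lemma~\ref{lem-proximity-U-Z}, and then bound $\|\Pi \mathsf{Z} - \mathsf{Z}\Psi\|_{\operatorname{F}}$ uniformly in $(\Pi,\Psi)$ via a first-moment \emph{proximity graph} argument, which sidesteps the super-exponential size of $\mathfrak{S}_{n,\epsilon}$.

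First, I would use Lemma~\ref{lem-proximity-U-Z} to realize $U$ as some $\mathsf U \stackrel{d}{=} U$ coupled with a standard Gaussian $\mathsf{Z} \in \mathbb{R}^{n\times d}$ satisfying $\|\mathsf{U} - \mathsf{Z}/\sqrt{n}\|_{\operatorname{F}} \leq 8 d^2 n^{-0.49}$ with probability $1-o(1)$. Since $\Pi$ and $\Psi$ are orthogonal, the triangle inequality yields $\|\Pi \mathsf U - \mathsf U\Psi\|_{\operatorname{F}} \geq \tfrac{1}{\sqrt{n}} \|\Pi \mathsf{Z} - \mathsf{Z}\Psi\|_{\operatorname{F}} - 16 d^2 n^{-0.49}$; and since $d \geq 5$ gives $n^{-0.49} \ll n^{-1/d}$ while $\widehat\sigma \geq d^{-3} n^{-1/d}/\sqrt{\log n}$, the error $16 d^2 n^{-0.49}$ is $o(d^3 \widehat\sigma)$. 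Thus it suffices to show
\begin{equation*}
    \|\Pi \mathsf{Z} - \mathsf{Z}\Psi\|_{\operatorname{F}} \geq T := 9 d^3 \widehat\sigma \sqrt{n} \qquad \forall\, \Pi \in \mathfrak{S}_{n,\epsilon},\ \Psi \in \mathcal{S}_d,
\end{equation*}
with probability $1-o(1)$.

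For the main step, let $\mathsf{Z}_i$ denote the $i$-th row of $\mathsf{Z}$, $\psi \in \{\pm 1\}^d$ the diagonal of $\Psi$, and $D := \{i : \pi(i) \neq i\}$. Using the trivial lower bound $\|\Pi\mathsf Z - \mathsf Z\Psi\|_{\operatorname{F}}^2 \geq \sum_{i \in D}\|\mathsf Z_{\pi(i)} - \psi \circ \mathsf Z_i\|^2$ and applying Markov's inequality to these summands at the threshold $R_0^2 := 4T^2/(\epsilon n)$, the number of $i\in D$ with $\|\mathsf{Z}_{\pi(i)} - \psi \circ \mathsf{Z}_i\| \geq R_0$ is at most $T^2/R_0^2 = \epsilon n/4$. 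Hence if some $(\Pi, \Psi) \in \mathfrak{S}_{n,\epsilon}\times\mathcal{S}_d$ violates the bound, then the corresponding random proximity graph
\begin{equation*}
    G_\Psi := \{(i,j) \in [n]^2 : i \neq j,\ \|\mathsf{Z}_j - \psi \circ \mathsf{Z}_i\| < R_0\}
\end{equation*}
contains at least $|D| - \epsilon n/4 \geq \epsilon n/4$ edges. Since for $i \neq j$ we have $\mathsf{Z}_j - \psi \circ \mathsf{Z}_i \sim \mathcal{N}(0, 2\operatorname{I}_d)$, Gaussian small-ball anti-concentration (via $\Gamma(d/2+1)\geq (d/(2e))^{d/2}$) gives $\Pb(\|\mathsf{Z}_j - \psi \circ \mathsf{Z}_i\| < R_0) \leq (R_0^2 e/(2d))^{d/2}$. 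Substituting $R_0^2 = \Theta(n^{-2/d} \log\widehat\omega_n/\widehat\omega_n^2)$ (using $d\epsilon = 25/\log\widehat\omega_n$), we obtain $\mathbb{E}|G_\Psi| = O(n (\log\widehat\omega_n/\widehat\omega_n^2)^{d/2})$. Markov on $|G_\Psi|$ followed by a union bound over the $2^d$ values of $\Psi$ then gives
\begin{equation*}
    \Pb(\mathcal{E}_3^c) \lesssim d\log\widehat\omega_n \cdot \Bigl(\tfrac{C\log\widehat\omega_n}{\widehat\omega_n^2}\Bigr)^{d/2} \longrightarrow 0 \quad \text{as } n \to \infty,
\end{equation*}
using $\widehat\omega_n \to \infty$ and $d \geq 5$.

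The principal obstacle is that $|\mathfrak{S}_{n,\epsilon}|$ is super-exponentially large in $n$, so the naive chi-square concentration for a fixed $(\Pi, \Psi)$, which yields failure probability at best $\exp(-\Omega(d|D|))$, is far too weak to survive a direct union bound over $\mathfrak{S}_{n,\epsilon}$ in the regime $d = O(\log n)$. The proximity-graph reduction is precisely what allows us to replace the intractable combinatorial count of permutations by a first-moment estimate on a sparse random graph: any single bad permutation forces $\geq \epsilon n/4$ short edges into $G_\Psi$, whereas $\mathbb{E}|G_\Psi|$ is already $o(\epsilon n)$ with comfortable room to spare.
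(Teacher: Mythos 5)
Your reduction to the standard Gaussian matrix $\mathsf Z$ via Lemma~\ref{lem-proximity-U-Z} is identical to the paper's first step, but after that the two arguments diverge substantially. The paper runs a direct union bound over all $(\Pi,\Psi)\in\mathfrak S_{n,\epsilon}\times\mathcal S_d$, applying a Chernoff bound $\Pb(\|\Pi\mathsf Z\Psi-\mathsf Z\|_{\Fop}^2\leq a)\leq e^{ta}\,\E[e^{-t\|\Pi\mathsf Z\Psi-\mathsf Z\|_{\Fop}^2}]$ together with the \emph{exact} factorization of that moment-generating function over the cycles of $\pi$ (borrowed from \cite[Lemma~4]{WWXY22+}), and then controls the resulting sum over derangements using \cite[Equation~(49)]{WWXY22+}. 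Your first-moment proximity-graph argument replaces all of this: any violating $(\Pi,\Psi)$ forces $\Omega(\epsilon n)$ pairs $(i,\pi(i))$ with $\|\mathsf Z_{\pi(i)}-\psi\circ\mathsf Z_i\|<R_0$, while a Gaussian small-ball estimate shows $\E|G_\Psi|=o(\epsilon n)$, so Markov plus a union bound over the mere $2^d$ choices of $\Psi$ suffices. This is cleverer in one respect and weaker in another. It is more elementary and self-contained (it needs only the small-ball volume bound, not the cycle-structured MGF formula and the derangement counting from \cite{WWXY22+}), and it neatly sidesteps the super-exponential enumeration of $\mathfrak S_{n,\epsilon}$ by converting a worst-case permutation statement into an average-case statement about a sparse random graph. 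On the other hand, the failure probability you obtain is only of order $d\log\widehat\omega_n\cdot(C\log\widehat\omega_n/\widehat\omega_n^2)^{d/2}$, which tends to zero as needed but is far from the $o(e^{-81n/32})$ that the paper's Chernoff computation delivers; for this lemma the $o(1)$ conclusion is all that is required, so the extra strength is not used, but it is worth noting that your argument could not be reused in a setting demanding exponential tails. Two minor points to tighten if you write this up: since $d_{\Fop{H}}(\Pi,\mathrm I_n)=2|D|$, membership in $\mathfrak S_{n,\epsilon}$ gives $|D|\geq\epsilon n/2$ (not $\epsilon n$), which is still enough for your $|D|-\epsilon n/4\geq\epsilon n/4$ edge count; and you should state explicitly that $\E|G_\Psi|$ is independent of $\Psi$ because $\psi\circ\mathsf Z_i\overset{d}{=}\mathsf Z_i$, so the Markov bound is uniform before the $2^d$ union bound is taken.
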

\begin{proof}
    Provided with Lemma~\ref{lem-proximity-U-Z}, it remains to prove (recall that we have assumed $d \geq 5$ and thus we have  
    $n^{-0.49} \ll \widehat{\sigma}$) 
\begin{equation}\label{eq-goal2-E2}
    \Pb \Big( \exists\,\Pi \in \mathfrak S_{n,\epsilon}, \Psi \in \mathcal{S}_d \,\mbox{ such that} \, \| \Pi \mathsf{Z} \Psi-\mathsf{Z} \|_{\operatorname{F}} \leq 9 d^3 \sqrt{n} \widehat{\sigma} \Big) =o(1) \,.
\end{equation}
By a union bound and Chebyshev's inequality, the left hand side of \eqref{eq-goal2-E2} is upper-bounded by
\begin{equation}\label{eq-union-E2}
    \sum_{ \Pi\in\mathfrak S_{n,\epsilon} } \sum_{ \Psi \in \mathcal{S}_d } e^{81t nd^6 \widehat{\sigma}^2} \mathbb{E} \Big[ e^{-t \| \mathsf{Z}-\Pi\mathsf{Z}\Psi \|_{\operatorname{F} }^2 } \Big] \,.
\end{equation}
Taking $t=1/(32{\sigma}_0^2)$ where ${\sigma}_0=d^3 \widehat{\sigma}$, it suffices for us to prove that
\begin{equation}\label{eq-final-goal-high-noise-E2}
    \sum_{\Pi\in\mathfrak S_{n,\epsilon}} \sum_{\Psi\in \mathcal{S}_d} \mathbb{E} \Big[ e^{-\|X-\Pi X\Psi\|_{\operatorname{F} }^2/ (32{\sigma}_0^2) } \Big] = o\Big(e^{-81n/32} \Big)\,.
\end{equation} 
By \cite[Lemma~4]{WWXY22+}, we have 
$$ \mathbb{E} \Big[ e^{-\|X-\Pi X\Psi\|_{\operatorname{F}}^2 /(32{\sigma}_0^2) } \Big] = \prod_{k=1}^{n} [a_k(\Psi)]^{n_k} \,, $$
where $n_k$ is the number of $k$-cycles in the permutation $\Pi$, and for $\Psi=\operatorname{diag}(\psi_1,\ldots,\psi_d)$ the term $a_k(\Psi)$ is defined to be 
$$ a_k(\Psi)=(4\sigma_0)^{kd} \prod_{l=1}^d \Bigg[ \Big( \sqrt{1+4\sigma_0^2}+2\sigma_0 \Big)^{2k} + \Big( \sqrt{1+4\sigma_0^2}-2\sigma_0 \Big)^{2k} -2\psi_l^k \Bigg]^{-1/2} \,. $$
A direct calculation yields the following estimation on $a_k(\Psi)$: 
\begin{align}
    a_1(\Psi) &= (4\sigma_0)^d \prod_{l=1}^d (2-2\psi_l+16 \sigma_0^2)^{-1/2} \leq 1 \,, \label{eq-bound-a-1-Psi} \\
    a_k(\Psi) &\leq a_k(\mathrm{I}_d) = (4\sigma_0)^{kd} \Bigg[ \Big( \sqrt{1+4\sigma_0^2}+2\sigma_0 \Big)^{k} - \Big( \sqrt{1+4\sigma_0^2}+2\sigma_0 \Big)^{-k} \Bigg]^{-d} \nonumber \\
    & \leq (4\sigma_0)^{kd} \Big[ (1+4\sigma_0)^{k} - (1+4\sigma_0)^{-k} \Big]^{-d} \leq (4\sigma_0)^{(k-1)d}  \,. \label{eq-bound-a-k-Psi}
\end{align}
Applying \eqref{eq-bound-a-1-Psi} and \eqref{eq-bound-a-k-Psi}, we obtain 
\begin{align*}
    \mathbb{E} \Big[ e^{-\|X-\Pi X \Psi \|_{\operatorname{F}}^2 /(32\sigma_0^2) } \Big] &\leq  \prod_{j=2}^{n} \Big( (4\sigma_0)^{d(j-1)} \Big)^{n_j} 
    = (4\sigma_0)^{ \sum_{j=2}^{n} d(j-1)n_j } = (4\sigma_0)^{ d(n - \sum_{j=1}^{n} n_j) }  \,,
\end{align*}
where the last equality comes from 
$$ \sum_{j=2}^{n} (j-1)n_j = \sum_{j=1}^{n} jn_j - \sum_{j=1}^{n} n_j = n - \sum_{j=1}^{n} n_j \,. $$ 
We denote by $m=n-n_1$ be the number of non-fixed points of $\pi$. Let $\pi^\star$ be the restriction of $\pi$ on non-fixed points, and thus we see that $\pi^\star$ is a derangement on the set of non-fixed points. Let $\mathfrak c(\pi)$ be the number of cycles in $\pi$, we immediately have 
$$ \mathfrak{c}(\pi^\star) = \mathfrak{c}(\pi) - n_1 = \sum_{j=2}^{n} n_j\,, $$
and thus $n-\sum_{j=1}^{n} n_j=n-n_1-\mathfrak{c}(\pi^\star) =m-\mathfrak{c}(\pi^\star)$. So the left hand side of \eqref{eq-final-goal-high-noise-E2} is bounded by
\begin{equation}\label{eq-union-E2-upboundsigma}
    \sum_{m=\epsilon n}^n 2^d \binom{n}{m} \sum_{\pi^\star \mathsf{derangement}} (4\sigma_0)^{d(m-\mathfrak c(\pi^\star))} = \sum_{m=\epsilon n}^n 2^d \binom{n}{m}(4\sigma_0)^{dm} \sum_{\pi^\star\mathsf{derangement}}(4\sigma_0)^{-d\mathfrak c(\pi^\star)} \,.
\end{equation}
Applying \cite[Equation~(49)]{WWXY22+} with $L=(4\sigma_0)^{-d}$, we derive that
\begin{equation}
    \sum_{\pi^\star\mathsf{derangement}} (4\sigma_0)^{-d\mathfrak c(\pi^\star)}\leq m! \cdot \Bigg( \frac{16\cdot(4\sigma_0)^{-d}}{m} \Bigg)^{m/2} \,.
\end{equation}
Plugging the above expression into \eqref{eq-union-E2-upboundsigma} yields that the left hand side of \eqref{eq-final-goal-high-noise-E2} is bounded by
\begin{align*}
    &\sum_{m=\epsilon n}^n 2^d \binom{n}{m} (4\sigma_0)^{md} \cdot m! \Bigg( \frac{16\cdot(4\sigma_0)^{-d}}{m} \Bigg)^{m/2} \leq \sum_{m=\epsilon n}^n 2^d n^m \Bigg( \frac{16\cdot(4\sigma_0)^{d}}{m} \Bigg)^{m/2} \\
    = &\sum_{m=\varepsilon n}^n \exp \Bigg( m\log n+  \frac{md}{2}\log (4\sigma_0)- \frac{m}{2} \log m+ d\log 2 +\frac{m}{2}\log 16 \Bigg) \\
    \leq &\sum_{m=\varepsilon n}^n \exp \Bigg( m \Big( \log n - \frac{1}{2} \log m - \frac{1}{2} \log n - \frac{d}{2} \log \frac{\widehat\omega_n}{4} + \frac{1}{2}\log 16 \Big) +d\log 2\Bigg) \\
    \leq &\sum_{m=\varepsilon n}^n \exp \Bigg( m \Big( \log (\epsilon^{-1}) - \frac{d}{4} \log \widehat\omega_n \Big) +d\log 2\Bigg) \\
    \overset{\eqref{eq-direct-form-epsilon}}{\leq}    &\sum_{m=\varepsilon n}^n \exp \Big( -\frac{1}{8} md \log (\widehat\omega_n) \Big) \leq n \exp \Big( -\frac{1}{8} \epsilon nd \log (\widehat\omega_n) \Big) \overset{\eqref{eq-direct-form-epsilon}}{=} o(e^{-81n/32}) \,,
\end{align*}
where the second inequality follows from $\sigma_0=d^3 \widehat{\sigma}= n^{-1/d} /{\widehat\omega_n}$ and $d=O(\log n)$, and the third inequality follows from $m \geq \epsilon n$. Thus we conclude that \eqref{eq-final-goal-high-noise-E2} holds, completing the proof of \eqref{eq-goal2-E2} (and so does the proof of the whole lemma).
\end{proof}
\begin{lemma}{\label{lem-total-good-event-almost-exact-recovery}}
    Let $\mathcal E_{\diamond}=\mathcal E_1 \cap \mathcal E_2 \cap \mathcal E_3$. We then have $\mathbb{P}(\mathcal E_{\diamond})=1-o(1)$.
\end{lemma}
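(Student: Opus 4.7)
The plan is to observe that each of the three events $\mathcal E_1,\mathcal E_2,\mathcal E_3$ has been shown (or can be shown immediately) to hold with probability $1-o(1)$, and then to apply a union bound.

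First, $\Pb(\mathcal E_1)=1-o(1)$ is exactly the statement of Corollary~\ref{cor-diff-orthogonal}, which was derived from Proposition~\ref{prop-diff-orthogonal-matrix} together with Lemmas~\ref{lem-Estar} and \ref{lem-Estar-to-E1Psi_0}. Next, $\Pb(\mathcal E_3)=1-o(1)$ is exactly Lemma~\ref{lem-typical-E3}. The only remaining ingredient is $\Pb(\mathcal E_2)=1-o(1)$, but this is already contained (for $X,Y$) in the bound \eqref{eq-Fnorms-X-Y} inside $\mathcal E_\star$, and the bound on $\|Z\|_{\operatorname{F}}$ follows from the same Bernstein-type argument used in the proof of Lemma~\ref{lem-Estar}: since $\|Z\|_{\operatorname{F}}^2$ is a sum of $nd$ i.i.d.\ $\chi^2_1$ variables with mean $nd$, Bernstein's inequality yields $\Pb(\|Z\|_{\operatorname{F}}^2\geq 4nd)\leq \exp(-cnd)$.

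Putting these three bounds together via a union bound gives
\[
\Pb(\mathcal E_\diamond^{c}) \leq \Pb(\mathcal E_1^c)+\Pb(\mathcal E_2^c)+\Pb(\mathcal E_3^c) = o(1)\,,
\]
which yields $\Pb(\mathcal E_\diamond)=1-o(1)$, completing the proof. There is no substantive obstacle at this step: all the real work has already been done in establishing the individual events, and this lemma simply packages them into a single high-probability event on which the subsequent argument (bounding $d_{\operatorname{H}}(\widehat\Pi_{\operatorname{Umeyama}},\Pi^*)$ on $\mathfrak S_{n,\epsilon}$) will be carried out.
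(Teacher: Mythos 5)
Your proposal is correct and follows the same route as the paper: the paper likewise invokes Corollary~\ref{cor-diff-orthogonal} for $\mathcal E_1$, Lemma~\ref{lem-typical-E3} for $\mathcal E_3$, notes that $\Pb(\mathcal E_2)=1-o(1)$ is standard by Bernstein's inequality, and combines them. The extra detail you supply on the $\|Z\|_{\operatorname{F}}$ bound is consistent with the paper's brief remark.
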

\begin{proof}
The proof of $\Pb(\mathcal E_2)=1-o(1)$ is standard and follows from Bernstein's inequality. Then Lemma~\ref{lem-total-good-event-almost-exact-recovery} follows by combining the fact that $\Pb(\mathcal E_2)=1-o(1)$, Corollary~\ref{cor-diff-orthogonal} and Lemma~\ref{lem-typical-E3}.
\end{proof}

Now we are prepared to prove Theorem~\ref{thm-almost-exact-recovery}.

\begin{proof}[Proof of Theorem~\ref{thm-almost-exact-recovery}.]
    It suffices to show that $\mathcal E_{\diamond}$ implies (recall the definition of $\Psi_0$ in $\mathcal{E}_0$)
    \begin{equation}{ \label{eq-partial-recovery-goal-1} }
        \Big\langle \Pi U \Psi, V \Big\rangle \leq \Big\langle U \Psi_0, V \Big\rangle \,, \forall \,  \Pi\in\mathfrak S_{n,\epsilon}, \Psi \in \mathcal{S}_d \,.
    \end{equation}
    In fact, using $\| M \Phi \|_{\operatorname{F}} = \| M \|_{\operatorname{F}}$ for all $M \in \mathbb R^{n*d}$ and $\Phi \in \mathcal{S}_d$, under $\mathcal E_3$ we have
    \begin{equation}{ \label{eq-partial-recovery-relaxation-1} }
        \big\| \Pi U \Psi - U \Psi_0 \big\|_{\operatorname{F}} \geq 8.5 d^3 \widehat{\sigma} \,, \forall \,  \Pi\in\mathfrak S_{n,\epsilon}, \Psi \in \mathcal{S}_d \,.
    \end{equation} 
    Note that \eqref{eq-def-good-event-0-III} in the definition of $\mathcal E_1$ implies that $\| U \Psi_0 - V \|_{\operatorname{F}} \leq 3d^3 \widehat\sigma$. Using the triangle inequality that $\| \Pi U \Psi - V \|_{\operatorname{F}} \geq \| \Pi U \Psi - U \Psi_0 \|_{\operatorname{F}} - \| U \Psi_0 - V \|_{\operatorname{F}}$ we have
    \begin{equation}{ \label{eq-partial-recovery-relaxation-2} }
        \big\| \Pi U \Psi - V \big\|_{\operatorname{F}} \geq 5 d^3 \widehat{\sigma} \,, \forall\,  \Pi\in\mathfrak S_{n,\epsilon}, \Psi \in \mathcal{S}_d \,.
    \end{equation}
    Applying \eqref{eq-def-good-event-0-III} again, we have
    \begin{equation}{ \label{eq-partial-recovery-relaxation-3} }
        \big\| \Pi U \Psi - V \big\|_{\operatorname{F}}^2 \geq \big\| U \Psi_0 - V \big\|_{\operatorname{F}}^2, \forall\, \Pi\in\mathfrak S_{n,\epsilon} \,, \Psi \in \mathcal{S}_d \,.
    \end{equation}
    This implies \eqref{eq-partial-recovery-goal-1} immediately since $\|U\Psi_0\|_{\operatorname{F}} = \|\Pi U\Psi\|_{\operatorname{F}}$ for all $\Pi \in \mathfrak{S}_n$ and $\Psi\in \mathcal{S}_d$.
\end{proof}

Now we conclude the proof of almost exact recovery for the distance model. 
The next lemma characterizes the difference between $\widetilde U$ and $U$ (respectively, $\widetilde V$ and $V$) up to an element in $\mathcal S_d$.
\begin{lemma}\label{lem-diff-UUtilde}
    Let $\Psi_1$ and $\Psi_2$ be the witnesses in \eqref{eq-diff-QQtilde} and \eqref{eq-diff-RRtilde} respectively. Consider $\widetilde U$ and $U$ (respectively, $\widetilde V$ and $V$) in the singular value decompositions of $\widetilde X$ and $X$ (respectively, $\widetilde Y$ and $Y$). Then $\mathbb P(\widetilde{\mathcal E}_2) = 1-o(1)$, where $\widetilde{\mathcal E}_2$ is defined to be the following event:
    \begin{align}
        \|\widetilde U-U\Psi_1\|_{\operatorname{op}} \leq n^{-0.49} \mbox{ and } \|\widetilde V-V\Psi_2\|_{\operatorname{op}}\leq n^{-0.49} \,. \label{eq-diff-UUtilde-VVtilde} 
    \end{align}
\end{lemma}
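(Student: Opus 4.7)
My plan is to exploit the SVD identities $U = XQ\Lambda^{-1}$ and $\widetilde U = \widetilde X\widetilde Q\widetilde\Lambda^{-1}$, together with the fact that the diagonal matrices $\Lambda^{-1}$ and $\Psi_1$ commute (so that $U\Psi_1 = XQ\Psi_1\Lambda^{-1}$), to perform a three-term telescoping decomposition:
\begin{align*}
    \widetilde U - U\Psi_1 = \widetilde X(\widetilde Q - Q\Psi_1)\widetilde\Lambda^{-1} + (\widetilde X - X)Q\Psi_1\widetilde\Lambda^{-1} + XQ\Psi_1\bigl(\widetilde\Lambda^{-1} - \Lambda^{-1}\bigr)\,.
\end{align*}
I will then bound each summand in operator norm via sub-multiplicativity and the ingredients already assembled in the earlier subsection.

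For the first summand, I would use $\|\widetilde X\|_{\op} = \widetilde\Lambda_{1,1} \lesssim \sqrt{n}$ and $\|\widetilde\Lambda^{-1}\|_{\op} \lesssim 1/\sqrt{n}$ from \eqref{eq-singular-tildeX}, together with $\|\widetilde Q - Q\Psi_1\|_{\op} \leq \|\widetilde Q - Q\Psi_1\|_{\Fop} \leq n^{-0.49}$ from \eqref{eq-diff-QQtilde}, giving a contribution of order $n^{-0.49}$. For the second summand, the key observation is that $\widetilde X - X = -\mathbf F X$ is rank one, so its operator norm is bounded by its Frobenius norm, which by \eqref{eq-Fnorm-FX} is at most $\sqrt{d\widehat\omega_n}$; multiplying by $\|\widetilde\Lambda^{-1}\|_{\op} \lesssim 1/\sqrt{n}$ (and using that $Q\Psi_1$ is orthogonal) yields $O(\sqrt{d\widehat\omega_n/n})$, which is $o(n^{-0.49})$. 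For the third summand, Weyl's inequality applied to $\|X^\top X - \widetilde X^\top \widetilde X\|_{\op} \leq \|\mathbf F X\|_{\Fop}^2 \leq d\widehat\omega_n$ yields $|\Lambda_{i,i}^2 - \widetilde\Lambda_{i,i}^2| \leq d\widehat\omega_n$, hence $|\Lambda_{i,i}^{-1} - \widetilde\Lambda_{i,i}^{-1}| \lesssim d\widehat\omega_n/n^{3/2}$; combined with $\|X\|_{\op} \lesssim \sqrt{n}$ from \eqref{eq-eigen-Lambda}, this term is $O(d\widehat\omega_n/n) = o(n^{-0.49})$.

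The bound on $\|\widetilde V - V\Psi_2\|_{\op}$ follows by exactly the same argument, substituting $(Y, V, R, \Sigma, \widetilde Y, \widetilde V, \widetilde R, \widetilde\Sigma, \Psi_2)$ for $(X, U, Q, \Lambda, \widetilde X, \widetilde U, \widetilde Q, \widetilde\Lambda, \Psi_1)$, and invoking \eqref{eq-diff-RRtilde} and \eqref{eq-singular-tildeY} in the analogous places. The main obstacle will be making sure the first summand is genuinely $O(n^{-0.49})$: the product of a $\sqrt{n}$, an $n^{-0.49}$, and a $1/\sqrt{n}$ only gives a constant multiple of $n^{-0.49}$, so the intended reading of the lemma is that $n^{-0.49}$ stands for ``any exponent strictly less than $1/2$'' (this is how the bound is used in the later subsections), and the argument will in fact produce a bound of the form $Cn^{-0.49}$ which is absorbed by the asymptotic slack. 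The only other delicate point is using the rank-one structure of $X - \widetilde X$ in the second term: bounding it by the naive Frobenius route would still give a $\sqrt{d\widehat\omega_n}/\sqrt{n}$ control, which is harmless, but relying on the operator-norm scaling alone is what keeps the argument clean.
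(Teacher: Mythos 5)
Your proposal is correct and uses the same ingredients as the paper, namely \eqref{eq-diff-X-Xtilde}/\eqref{eq-Fnorm-FX}, \eqref{eq-diff-QQtilde}, \eqref{eq-singular-tildeX}, and \eqref{eq-eigen-Lambda}, together with sub-multiplicativity of the operator norm. The only difference is the telescoping scheme: you insert the two intermediates $\widetilde X\, Q\Psi_1\widetilde\Lambda^{-1}$ and $XQ\Psi_1\widetilde\Lambda^{-1}$ to pass between the left and right factors one at a time, whereas the paper telescopes through the scalar midpoint $\frac{1}{\sqrt n}\operatorname{I}_d$ in the $\Lambda$-slot (writing $\widetilde\Lambda^{-1}=\frac{1}{\sqrt n}\operatorname{I}_d+(\widetilde\Lambda^{-1}-\frac{1}{\sqrt n}\operatorname{I}_d)$ and likewise for $\Lambda^{-1}$) and then further splits the resulting $\frac{1}{\sqrt n}\|\widetilde X\widetilde Q - XQ\Psi_1\|_{\op}$ piece into an $X-\widetilde X$ part and a $\widetilde Q - Q\Psi_1$ part. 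The two decompositions are algebraically different but land on precisely the same three elementary estimates, so there is no real gain either way; your observation that the dominant term produces only a constant multiple of $n^{-0.49}$, which is absorbed because the $n^{-0.49}$ bound was itself obtained with slack from $n^{-0.495}$, accurately reflects how the paper handles the same issue.
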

\begin{proof}
    By the triangle inequality, we have that 
    \begin{align}
        &\|\widetilde U-U\Psi_1\|_{\operatorname{op}} =\|\widetilde X\widetilde Q\widetilde\Lambda^{-1}-XQ\Psi_1\Lambda^{-1}\|_{\operatorname{op}} \nonumber \\
        \leq \ &\frac{1}{\sqrt{n}}\|\widetilde X\widetilde Q-XQ\Psi_1\|_{\operatorname{op}}+\|\widetilde X\widetilde Q(\widetilde\Lambda^{-1}-\frac{1}{\sqrt{n}}\operatorname{I}_d)\|_{\op}+\|XQ\Psi_1(\Lambda^{-1}-\frac{1}{\sqrt{n}} \operatorname{I}_d) \|_{\op} \,.  \label{eq-diff-U-tilde-U-3-parts}
    \end{align}
    Using \eqref{eq-singular-tildeX} and \eqref{eq-eigen-Lambda}, with probability $1-o(1)$ we have
    \begin{equation*}
       \|\Lambda^{-1}-\frac{1}{\sqrt{n}} \operatorname{I}_d \|_{\operatorname{op}} \leq \frac{3d\widehat{\omega}_n}{n} \mbox{ and }
       \|\widetilde\Lambda^{-1}-\frac{1}{\sqrt{n}} \operatorname{I}_d \|_{\operatorname{op}} \leq \frac{3d\widehat{\omega}_n}{n} \,.
    \end{equation*}
    Therefore, with probability $1-o(1)$ we have
    \begin{equation}{\label{eq-bound-diff-U-tilde-U-part-1}}
        \| \widetilde{X} \widetilde{Q} (\widetilde{\Lambda}^{-1}-\frac{1}{\sqrt{n}} \operatorname{I}_d)\|_{\op} \leq \| \widetilde X \|_{\op} \| \widetilde{\Lambda}^{-1}-\frac{1}{\sqrt{n}} \operatorname{I}_d \|_{\op} \leq 2\sqrt{n} \cdot \frac{3d\widehat{\omega}_n}{n} = \frac{6d\widehat{\omega}_n}{\sqrt{n}} \,.
    \end{equation}
    And similarly
    \begin{equation}{\label{eq-bound-diff-U-tilde-U-part-2}}
       \| X Q \Psi_1(\Lambda^{-1}-\frac{1}{\sqrt{n}} \operatorname{I}_d)\|_{\op} \leq \frac{6d\widehat{\omega}_n}{\sqrt{n}} \,.
    \end{equation}
    Finally we have
    \begin{align}
        \frac{1}{\sqrt{n}}\|\widetilde X\widetilde Q-XQ\Psi_1\|_{\operatorname{op}} &\leq \frac{1}{\sqrt{n}} \Big(\|(X-\widetilde X)\widetilde Q\|_{\op}+\|X(\widetilde Q-Q\Psi_1)\|_{\op} \Big) \nonumber \\
        &\leq \frac{1}{\sqrt{n}} \Big(\|(X-\widetilde X) \|_{\op}+ \| X \|_{\op} \|(\widetilde Q-Q\Psi_1)\|_{\op} \Big) \leq n^{-0.49} \,, \label{eq-bound-diff-U-tilde-U-part-3}
    \end{align}
    where the first inequality follows from triangle inequality, and the third inequality follows from \eqref{eq-diff-X-Xtilde} and \eqref{eq-diff-QQtilde}. Plugging \eqref{eq-bound-diff-U-tilde-U-part-1}--\eqref{eq-bound-diff-U-tilde-U-part-3} into \eqref{eq-diff-U-tilde-U-3-parts} yields that with probability $1-o(1)$ we have $\| \widetilde{U} -U\Psi_1 \|_{\op} \leq n^{-0.49}$.
    The proof of the claim regarding $\| \widetilde{V}-V\Psi_2 \|$ is the same, and thus we omit the details.
\end{proof}
Similar to the case of the dot-product model, we define the following ``good" events which occur with probability $1-o(1)$.
\begin{lemma}\label{lem-good-events-E2-distance-model}
    Define $\widetilde{\mathcal{E}_\diamond}=\mathcal{E}_\diamond\cap\widetilde{\mathcal{E}}_1\cap\widetilde{\mathcal{E}_2}\cap{\widetilde{\mathcal{E}}_3}$, where
    \begin{equation}
        \widetilde{\mathcal{E}}_3:= \Big\{ \, \forall\, \Pi\in\mathfrak S_{n,\epsilon}, \Psi\in\mathcal{S}_d \,\mbox{ we have }\, \|\Pi \widetilde U-\widetilde U\Psi\|_{\operatorname{F}}\geq 8d^3 \widehat{\sigma} \Big\}\,.
    \end{equation}
       Then we have that $\widetilde{\mathcal{E}}_{\diamond}$ holds with probability $1-o(1)$.
\end{lemma}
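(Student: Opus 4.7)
The plan is to observe that three of the four events comprising $\widetilde{\mathcal{E}}_\diamond$ have already been shown to hold with probability $1-o(1)$: namely $\mathcal{E}_\diamond$ by Lemma~\ref{lem-total-good-event-almost-exact-recovery}, $\widetilde{\mathcal{E}}_1$ by Corollary~\ref{cor-diff-UV-distance-model}, and $\widetilde{\mathcal{E}}_2$ by Lemma~\ref{lem-diff-UUtilde}. Hence by a union bound it suffices to prove $\Pb(\widetilde{\mathcal{E}}_3)=1-o(1)$, and I propose to derive $\widetilde{\mathcal{E}}_3$ from the conjunction $\mathcal{E}_3 \cap \widetilde{\mathcal{E}}_2$ by a short perturbation argument.

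Specifically, working on $\mathcal{E}_3 \cap \widetilde{\mathcal{E}}_2$, fix any $\Pi\in\mathfrak{S}_{n,\epsilon}$ and any $\Psi\in\mathcal{S}_d$ and write $\widetilde{U}=U\Psi_1 + E$ with $E=\widetilde{U}-U\Psi_1$. Then
\begin{equation*}
\Pi \widetilde{U}-\widetilde{U}\Psi = \bigl(\Pi U\Psi_1 - U\Psi_1\Psi\bigr) + \Pi E - E\Psi,
\end{equation*}
so by the triangle inequality and since $\Pi$ is a permutation and $\Psi\in\mathcal{S}_d$ is an orthogonal diagonal matrix,
\begin{equation*}
\|\Pi\widetilde{U}-\widetilde{U}\Psi\|_{\Fop} \geq \|\Pi U\Psi_1 - U\Psi_1\Psi\|_{\Fop} - 2\|E\|_{\Fop}.
\end{equation*}
The key observation is that $\|\Pi U\Psi_1-U\Psi_1\Psi\|_{\Fop} = \|\Pi U - U\Psi'\|_{\Fop}$ where $\Psi' := \Psi_1\Psi\Psi_1^{-1}\in\mathcal{S}_d$ (using $\Psi_1^{-1}=\Psi_1$ and right-multiplication invariance of the Frobenius norm under orthogonal matrices). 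Therefore the event $\mathcal{E}_3$ yields the lower bound $\|\Pi U\Psi_1-U\Psi_1\Psi\|_{\Fop}\geq 8.5 d^3 \widehat{\sigma}$.

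For the remainder term, Lemma~\ref{lem-diff-UUtilde} gives $\|E\|_{\op}\leq n^{-0.49}$, hence $\|E\|_{\Fop}\leq \sqrt{d}\,n^{-0.49}$. Since $d=O(\log n)$ and $d\geq 5$ implies $n^{1/d}\leq n^{1/5}$, we have $d^3\widehat{\sigma} = n^{-1/d}/\widehat{\omega}_n \geq n^{-1/5}/\sqrt{\log n}$, which dominates $\sqrt{d}\,n^{-0.49}$ by a polynomial factor in $n$. Thus $2\|E\|_{\Fop} \leq 0.5 d^3\widehat{\sigma}$ comfortably, yielding $\|\Pi\widetilde{U}-\widetilde{U}\Psi\|_{\Fop}\geq 8 d^3 \widehat{\sigma}$ uniformly in $(\Pi,\Psi)$, which is exactly $\widetilde{\mathcal{E}}_3$. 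The main (and only nontrivial) step here is the algebraic identity that reduces the deviation of $\Pi\widetilde{U}-\widetilde{U}\Psi$ to that of $\Pi U - U\Psi'$ with $\Psi'\in\mathcal{S}_d$; everything else is triangle inequality and the already-established estimates. No additional probabilistic computation beyond the three cited results is needed.
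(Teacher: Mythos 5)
Your proposal is correct and follows essentially the same route as the paper: both decompose $\Pi\widetilde{U}-\widetilde{U}\Psi$ into a term matching the event $\mathcal{E}_3$ (after commuting the diagonal sign matrices and using right-orthogonal invariance of the Frobenius norm) plus a perturbation controlled by $\widetilde{\mathcal{E}}_2$ via $\|\widetilde{U}-U\Psi_1\|_{\op}\leq n^{-0.49}$, then combine with Lemma~\ref{lem-total-good-event-almost-exact-recovery}, Corollary~\ref{cor-diff-UV-distance-model}, and Lemma~\ref{lem-diff-UUtilde} by a union bound. The only cosmetic difference is that you name the perturbation $E$ and the conjugated sign matrix $\Psi'$ explicitly (in fact $\Psi'=\Psi$ since diagonal matrices commute), whereas the paper performs the same manipulation inline in \eqref{eq-E3-to-E3tilde}.
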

Provided with Lemma~\ref{lem-good-events-E2-distance-model}, we can finish the proof of Theorem~\ref{thm-almost-exact-recovery-distance} in the same way just as how we derived Theorem~\ref{thm-almost-exact-recovery} from Corollary~\ref{cor-total-good-event-exact-recovery}. The only difference is that we will replace all $U,V$ with $\widetilde{U}, \widetilde{V}$ and thus we omit further details here. We now finish this subsection by proving Lemma~\ref{lem-good-events-E2-distance-model}.
\begin{proof}[Proof of Lemma~\ref{lem-good-events-E2-distance-model}]
    It suffices to prove that $\widetilde{\mathcal{E}}_3$ holds with probability $1-o(1)$. Let $\Psi_1$ to be the witness of \eqref{eq-diff-QQtilde}, and also recall the definition of $\mathcal{E}_3$ in \eqref{eq-def-good-event-2}. Then using the triangle inequality we have 
    \begin{align}
        \| \Pi \widetilde{U} - \widetilde{U} \Psi \|_{\Fop} &= \| \Pi \widetilde{U} \Psi_1 - \widetilde{U} \Psi \Psi_1 \|_{\Fop} 
        = \| \Pi \widetilde{U} \Psi_1 -  \widetilde{U} \Psi_1 \Psi \|_{\Fop} \nonumber \\
        &\geq \| \Pi U - U \Psi \|_{\Fop} - \| \Pi (\widetilde{U} \Psi_1 - U) \|_{\Fop} -  \| (U-\widetilde{U} \Psi_1) \Psi \|_{\Fop} \nonumber \\
        &= \| \Pi U - U \Psi \|_{\Fop} - \| \widetilde{U} \Psi_1 - U \|_{\Fop} - \| U-\widetilde{U} \Psi_1 \|_{\Fop} \,, \label{eq-E3-to-E3tilde}
    \end{align}
    where the first and the third equalities follow from the fact that Frobenius norm is invariant under any orthogonal transformation and the second equality follows from the fact that $\Psi$ and $\Psi_1$ are diagonal matrices.     Recall the definition of $\mathcal{E}_3$ in \eqref{eq-def-good-event-2}. By \eqref{eq-E3-to-E3tilde}, we have that $\widetilde{\mathcal{E}}_2\cap\mathcal{E}_3\subset \widetilde{\mathcal{E}}_3$. Then Lemma~\ref{lem-good-events-E2-distance-model} follows from Lemma~\ref{lem-typical-E3} and Lemma~\ref{lem-diff-UUtilde}.
\end{proof}

\subsection{Proofs of Exact Recovery}
Now we prove Theorems~\ref{thm-exact-recovery} and \ref{thm-exact-recovery-distance} regarding exact recovery. Again we may assume $\Pi^*=\operatorname{I}_n$. Recall that in either of the two cases by applying Theorem~\ref{thm-almost-exact-recovery} or \ref{thm-almost-exact-recovery-distance} we get that with probability $1-o(1)$ we have 
$\widehat{\Pi}_{\operatorname{Umeyama}}\in \mathfrak S_n\setminus\mathfrak S_{n,\epsilon_0}$, where 
\begin{equation}{\label{eq-def-epsilon-0}}
    \epsilon_0 = \frac{ 50 }{ d \log (\omega_n\wedge\log n) } \geq \frac{50}{\log \log n} \,.
\end{equation}
We define
\begin{equation}\label{eq-check-Sn-eps}
\Check{\mathfrak{S}}_{n,\epsilon}:=\mathfrak S_n \setminus (\mathfrak S_{n, \epsilon} \cup \{\operatorname{I}_n\})\,.
\end{equation}
Recall the definition of $\Psi_0$ in $\mathcal{E}_0$. Thus, for Theorem~\ref{thm-exact-recovery}, it suffices to show that
\begin{align*}
    \mathbb{P} \Big( \exists\, \Pi \in \check{\mathfrak{S}}_{n,\epsilon_0}, \Psi \in \mathcal{S}_d\mbox{ such that } \big\langle \Pi U\Psi,V \big\rangle > \big\langle U\Psi_0,V \big\rangle \Big) = o(1).
\end{align*}
We begin with the dot-product model. Define 
\begin{equation}{\label{eq-def-good-event-3}}
    \mathcal E_4 = \Big\{ \arg \max_{\Psi \in \mathcal{S}_d} \big\langle \Pi U \Psi, V \big\rangle = \Psi_0, \ \forall\, \Pi\in\mathfrak S_n\setminus\mathfrak S_{n,\epsilon_0} \Big\} \,.
\end{equation}
In addition, for any $\Pi \in \mathfrak{S}_n$ define $\mathsf{N}(\Pi)=\{ i \in [n] : \pi(i) \neq i \}$, where $\pi$ is the permutation corresponding to $\Pi$. For any set $\mathsf A \subset [n]$, define $X^{\mathsf A}=[ X^{\mathsf A}_1,\ldots,X^{\mathsf A}_d ]$ to be the matrix of $X,Y$ restricted on $\mathsf A$, where $X^{\mathsf A}_k$ is indexed by $\mathsf A$ with entries given by $X^{\mathsf A}_k(i)=X_k(i)$ for each $i \in \mathsf A$. Define $Y^{\mathsf{A}}, Z^{\mathsf A}$ in the same manner. Define
\begin{equation}
\begin{aligned}
    \mathcal E_5 = \Big\{ & 100 \widehat{\sigma} d^{3} \big\| X^{\mathsf N(\Pi)} \big\|_{\operatorname{F}}, 100 \widehat{\sigma} d^{3} \big\| Z^{\mathsf N(\Pi)} \big\|_{\operatorname{F}} \leq \big\| (\operatorname{I}_n-\Pi)X \big\|_{\operatorname{F}}, 
    \, \forall\, \Pi \in \check{\mathfrak{S}}_{n,\epsilon_0} \Big\} \,. \label{eq-def-good-event-4}
\end{aligned}
\end{equation}

\begin{lemma}\label{lem-typical-E4}
    We have that $\Pb(\mathcal{E}_4)=1-o(1)$.
\end{lemma}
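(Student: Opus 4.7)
I would reduce $\mathcal{E}_4$ to a uniform concentration bound on partial sums of squared entries of the columns of $U$ and $V$, and verify that bound via the Gaussian coupling from Lemma~\ref{lem-proximity-U-Z}. Write $M(\Pi) = U^\top \Pi^\top V \in \mathbb{R}^{d\times d}$. Since every $\Psi \in \mathcal{S}_d$ is diagonal, $\langle \Pi U \Psi, V\rangle = \sum_{j=1}^d \Psi_{jj} M(\Pi)_{jj}$, so the maximizer over $\mathcal{S}_d$ is obtained entrywise as $\Psi_{jj} = \operatorname{sgn}(M(\Pi)_{jj})$. Hence $\mathcal{E}_4$ is equivalent to the assertion that $(\Psi_0)_{jj} M(\Pi)_{jj} > 0$ for every $j \in [d]$ and every $\Pi \in \mathfrak{S}_n \setminus \mathfrak{S}_{n,\epsilon_0}$.

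For the dominant term, \eqref{eq-def-good-event-0-III} and $U^\top U = I_d$ yield $\|\Psi_0 - U^\top V\|_{\operatorname{F}} \leq 3 d^3 \widehat\sigma = o(1)$, so $(\Psi_0)_{jj}(U^\top V)_{jj} \geq 1 - o(1)$. For the perturbation, since $\pi$ acts as a derangement on $\mathsf{N}(\Pi)$,
\begin{equation*}
(M(\Pi) - U^\top V)_{jj} = \sum_{i \in \mathsf{N}(\Pi)} \bigl(U_{\pi(i), j} - U_{i,j}\bigr) V_{i,j}\,,
\end{equation*}
and Cauchy-Schwarz together with $\sum_{i \in \mathsf{N}(\Pi)} U_{\pi(i),j}^2 = \sum_{i \in \mathsf{N}(\Pi)} U_{i,j}^2$ yields
\begin{equation*}
|(M(\Pi) - U^\top V)_{jj}| \leq 2 \sqrt{\sum_{i \in \mathsf{N}(\Pi)} U_{i,j}^2} \cdot \sqrt{\sum_{i \in \mathsf{N}(\Pi)} V_{i,j}^2} \,.
\end{equation*}

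The crux is therefore to show that, with probability $1-o(1)$, uniformly over all $\mathsf A \subset [n]$ with $|\mathsf A| \leq \epsilon_0 n/2$ and all $j \in [d]$, $\sum_{i \in \mathsf A} U_{i,j}^2 \leq C \tfrac{|\mathsf A|}{n} \log(n/|\mathsf A|)$, and the analogous bound for $V$. Invoking Lemma~\ref{lem-proximity-U-Z}, I would couple $U$ with $\tfrac{1}{\sqrt n}\mathsf Z$ for some i.i.d.\ standard Gaussian matrix $\mathsf Z$; using $\|\mathsf U\|_{\operatorname{F}} = \sqrt d$ and $\|\mathsf Z\|_{\operatorname{F}} \lesssim \sqrt{nd}$ with high probability, a Cauchy-Schwarz estimate on $|\mathsf U_{ij}^2 - \tfrac{1}{n}\mathsf Z_{ij}^2|$ shows the $O(d^{2.5} n^{-0.49})$ discrepancy is negligible, reducing the problem to bounding $\tfrac{1}{n}\sum_{i\in\mathsf A}\mathsf Z_{i,j}^2 \sim \tfrac{1}{n}\chi_k^2$ for $k=|\mathsf A|$. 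The Laurent-Massart inequality gives $\Pb(\chi_k^2 \geq C'k\log(n/k)) \leq (k/n)^{ck}$ for a sufficiently large constant $c$; union-bounding over $\binom{n}{k}\leq(en/k)^k$ subsets, over $d$ columns, and over $k=1,\ldots,\lfloor\epsilon_0 n/2\rfloor$ absorbs the subset entropy at an exponential rate. The identical argument applies to $V$, whose columns are also marginally Haar on the Stiefel manifold.

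Combining the pieces, for $|\mathsf N(\Pi)| \leq \epsilon_0 n/2$ the perturbation bound is $O(\epsilon_0 \log(1/\epsilon_0)) = o(1)$ by \eqref{eq-def-epsilon-0} (since $x\log(1/x)\to 0$ as $x\to 0^+$), so $(\Psi_0)_{jj} M(\Pi)_{jj} \geq 1-o(1) > 0$ on the intersection of $\mathcal E_1$ with the concentration events, all of which have probability $1-o(1)$. I expect the main obstacle to be this uniform concentration step: the naive enumeration over subsets of size $k$ produces $\binom{n}{k}$ choices, so the chi-squared upper tail must be sharpened to the $k\log(n/k)$ scale (rather than merely linear in $k$) in order to dominate the subset entropy $k\log(en/k)$, but Laurent-Massart is exactly strong enough for this.
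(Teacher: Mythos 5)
Your reduction matches the paper's proof step by step: determine the maximizing $\Psi$ entrywise via $\operatorname{sgn}\langle\Pi u_j, v_j\rangle$, use $\mathcal{E}_1$ (via \eqref{eq-def-good-event-0-III}) to pin $\langle u_j,v_j\rangle$ near $\pm 1$, bound the perturbation $\langle(\Pi-\operatorname{I}_n)u_j, v_j\rangle$ by Cauchy--Schwarz, reduce to a uniform bound on $\sum_{\ell\in\mathsf A}u_j(\ell)^2$ over subsets with $|\mathsf A|\leq\epsilon_0 n$, and close with a union bound. The paper's execution is lighter in two respects. First, it bounds $\langle(\Pi-\operatorname{I}_n)u_j,v_j\rangle^2\leq\|(\Pi-\operatorname{I}_n)u_j\|^2$ using only $\|v_j\|=1$, so no concentration statement for $V$ is needed, whereas your two-sided Cauchy--Schwarz requires the analogous bound on $\sum_{i\in\mathsf A}V_{i,j}^2$. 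Second, the paper only proves the crude constant bound $\sum_{\ell\in\mathsf A}u_j(\ell)^2\leq 1/6$, directly from the spherical representation $u_j=z/\|z\|_2$ and an elementary chi-squared MGF estimate; the per-subset failure probability $e^{-cn}$ already dominates the subset entropy $\exp\bigl(O(\epsilon_0 n\log(1/\epsilon_0))\bigr)$ because $\epsilon_0\to 0$. Your sharper $(|\mathsf A|/n)\log(n/|\mathsf A|)$ bound via Lemma~\ref{lem-proximity-U-Z} plus Laurent--Massart is also correct (and you rightly identify that the $k\log(n/k)$ scale is what dominates the entropy $k\log(en/k)$), but it is more machinery than the sign argument requires; in particular the Gaussian coupling is unnecessary since the partial sums can be controlled directly on the sphere. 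One small caveat in your write-up: the $O(d^{2.5}n^{-0.49})$ coupling discrepancy is not negligible relative to $(|\mathsf A|/n)\log(n/|\mathsf A|)$ when $|\mathsf A|$ is, say, $O(1)$ — it is only negligible relative to the fixed $o(1)$ threshold you ultimately need, which is still enough to make the sign conclusion go through.
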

\begin{proof}
    Recall the definition of $\mathcal{E}_{\diamond}$ in Lemma~ \ref{lem-total-good-event-almost-exact-recovery}. It suffices to show that given $\mathcal E_{\diamond}$, we have that $\mathcal E_4$ holds with probability $1-o(1)$. 
Denote $\Psi=\operatorname{diag}(s_1,\ldots,s_d)$. Then we have
\begin{align*}
    \big\langle \Pi U \Psi, V \big\rangle = \big\langle [s_1 \Pi u_1, \ldots, s_d \Pi u_d], [v_1,\ldots,v_d] \big\rangle = \sum_{i=1}^{d} s_i \big\langle \Pi u_i,v_i \big\rangle \,.
\end{align*}
So the maximizer $\Psi$ of $\big\langle \Pi U \Psi, V \big\rangle$ must satisfy $s_i = \operatorname{sgn}(\langle \Pi u_i,v_i \rangle)$. It remains to show that with probability $1-o(1)$ we have 
\begin{align}
    \operatorname{sgn}(\langle \Pi u_i,v_i \rangle) = \operatorname{sgn}(\langle u_i,v_i \rangle), \ \forall\, 1 \leq i \leq d, \Pi\in\mathfrak S_n\setminus\mathfrak S_{n,\epsilon_0} \,. \label{eq-goal-uniqueness-maximizer}
\end{align}
Write $\Psi_0$ in \eqref{eq-def-good-event-0-III} by $\Psi_0 = \operatorname{diag}(\psi_1,\ldots, \psi_d) \in \mathcal{S}_d$. For each $1 \leq i \leq d$, we have
$$ 9d^3 \widehat\sigma^2  \geq \big\| U \Psi_0 - V \big\|_{\operatorname{F}}^2 = \sum_{j=1}^{d} \| \psi_j u_j - v_j \|^2 \geq \| \psi_i u_i - v_i \|^2 = 2 \big( 1-\psi_i \langle u_i,v_i \rangle \big)  \,. $$
Hence, $|\langle u_i,v_i\rangle|=1+o(1)$. Therefore, to prove \eqref{eq-goal-uniqueness-maximizer} it suffices to show that with probability $1-o(1)$ we have $|\langle \Pi u_i,v_i \rangle - \langle u_i,v_i \rangle| \leq 0.9$ for all $1 \leq i \leq d$ and $\Pi\in\mathfrak S_n\setminus\mathfrak S_{n,\epsilon_0}$. 
By Cauchy-Schwartz inequality we have that
\begin{align*}
    \big( \langle \Pi u_i,v_i \rangle - \langle u_i,v_i \rangle \big)^2 = \big\langle (\Pi-\operatorname{I}_n) u_i, v_i \big\rangle^2 \leq \big\| (\Pi-\operatorname{I}_n) u_i \big\|^2 \leq 4 \sum_{\pi(\ell) \neq \ell} u_i(\ell)^2\,,
\end{align*}
and thus it suffices to show that with probability $1-o(1)$ we have
\begin{equation}{\label{eq-goal-E3}}
    \sum_{\ell \in \mathsf A} u_i(\ell)^2 \leq \frac{1}{6} , \ \forall\, \mathsf A \subset [n], \#\mathsf A \leq \epsilon_0 n, 1 \leq i \leq d \,.
\end{equation}
For each fixed $1\leq i\leq d$, the vector $u_i$ is uniformly distributed on $S^{n-1}$, which can be represented as $u_i=\frac{1}{\Vert z\Vert_2}z$ where $z$ is an $n$-dimensional vector with i.i.d. standard normal entries. Thus for each fixed $\mathsf A$ with $|\mathsf A|\leq \epsilon_0 n$ we have
\begin{align}
    &\Pb \Bigg( \sum_{\ell \in \mathsf A} u_i(\ell)^2 \geq \frac{1}{6} \Bigg) = \Pb \Bigg( \frac{1}{\|z\|_2^2} \sum_{\ell \in \mathsf A} z_\ell^2 \geq \frac{1}{6} \Bigg) \nonumber  \\
    \leq \ &\Pb \Bigg( \sum_{\ell \in \mathsf A} z_\ell^2 \geq \frac{1}{6} \Vert z\Vert_2^2 , \Vert z\Vert_2^2 \geq n/2 \Bigg) +\exp(-0.1n) \nonumber \\
    \leq \ & \Pb \Bigg( \sum_{\ell\in \mathsf A} z_\ell^2 \geq \frac{n}{12} \Bigg) + \exp(-0.1n) \leq 2\exp(-0.1n)\,.
\end{align}
The enumeration of $(i,\mathsf A)$ with $\# \mathsf A \leq \epsilon_0 n$ and $1 \leq i \leq d$ is bounded by $d \cdot \binom{n}{\epsilon_0 n} =\exp(O(\epsilon_0 n))$. So \eqref{eq-goal-E3} follows from a union bound.
\end{proof}

\begin{lemma}\label{lem-typical-E5}
    We have that $\Pb(\mathcal{E}_5)=1-o(1)$.
\end{lemma}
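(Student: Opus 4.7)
The plan is to fix $\Pi \in \check{\mathfrak{S}}_{n,\epsilon_0}$ with $m := |\mathsf{N}(\Pi)|$ non-fixed points and cycle type $(c_1, \ldots, c_\ell)$ on $\mathsf{N}(\Pi)$ (each $c_i \geq 2$, $\sum_i c_i = m$), bound the failure probability for this $\Pi$ via a Chernoff argument applied to a Gaussian quadratic form MGF, and then union bound. Since $\Pi$ stabilizes $\mathsf{N}(\Pi)$, squaring the first inequality in the definition of $\mathcal{E}_5$ yields the equivalent condition $\sum_{k=1}^d W_{:,k}^{\top} (M_N^\top M_N - c\, I_m) W_{:,k} \geq 0$, where $W := X^{\mathsf{N}(\Pi)}$ has i.i.d.\ $\mathcal{N}(0,1)$ entries, $M_N := I_m - \Pi_{\mathsf{N}}$, and $c := 10^4 \widehat{\sigma}^2 d^6 = 10^4 n^{-4/d}/\widehat{\omega}_n^2 \ll 1$. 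The eigenvalues of $M_N^\top M_N = 2I_m - \Pi_{\mathsf{N}} - \Pi_{\mathsf{N}}^\top$ are $\{2 - 2\cos(2\pi j/c_i) : 1 \leq i \leq \ell,\, 0 \leq j < c_i\}$ with exactly $\ell$ zeros, and the classical identity $\prod_{j=1}^{c-1}(2 - 2\cos(2\pi j/c)) = c^2$ gives $\prod_{\lambda_j > 0} \lambda_j = \prod_i c_i^2$.

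I apply the Gaussian MGF identity $\mathbb{E}\bigl[\exp\bigl(-t \sum_k W_{:,k}^\top (M_N^\top M_N - c\, I_m) W_{:,k}\bigr)\bigr] = \det(I_m + 2t(M_N^\top M_N - c\, I_m))^{-d/2}$ at $t = 1/(4c)$ (valid since then $I_m + 2t(M_N^\top M_N - cI_m) = I_m/2 + M_N^\top M_N/(2c) \succ 0$). Together with Chernoff's inequality and the product identity above, this gives
\begin{equation*}
    \mathbb{P}\bigl(\text{first inequality fails for }\Pi\bigr) \;\leq\; 2^{dm/2}\, c^{d(m-\ell)/2} \Big/ \prod_{i=1}^\ell c_i^{\,d} \;=\; (10^4/\widehat{\omega}_n^2)^{d(m-\ell)/2}\, 2^{dm/2}\, n^{-2(m-\ell)} \Big/ \prod_i c_i^d,
\end{equation*}
using $\widehat{\sigma} d^3 = n^{-2/d}/\widehat{\omega}_n$ in the second equality. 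The second inequality, involving $Z$ independent of $X$, is handled identically: independence splits the MGF and the extra factor $\mathbb{E}[\exp(tc\,\chi^2_{md})] = 2^{md/2}$ at $t = 1/(4c)$ only contributes an additional benign $2^{dm/2}$, yielding a bound of the same order.

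For the union bound, at most $n^m/\prod c_i$ permutations carry a given cycle type, and the number of partitions of $m$ into $\ell$ parts each $\geq 2$ is at most $e^{O(\sqrt{m-\ell})}$. Using $\prod c_i \geq 2^\ell$ and the crucial inequality $m \geq 2\ell$ (so $n^m \cdot n^{-2(m-\ell)} = n^{2\ell - m} \leq 1$), the per-cycle-type aggregate bound collapses to $(2 \cdot 10^4/\widehat{\omega}_n^2)^{d(m-\ell)/2} \cdot 2^{-d\ell/2 - \ell}$. Summing over $m \geq 2\ell$ (geometric decay in $m - \ell \geq \ell$) and $\ell \geq 1$ gives a total probability of $O((10^4/\widehat{\omega}_n^2)^{d/2}) = o(1)$, since $\widehat{\omega}_n \to \infty$ and $d \geq 5$. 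The main obstacle is arranging the MGF estimate sharp enough to dominate the $n^m$ combinatorial entropy of permutations with $m$ non-fixed points; the decisive saving is that the tiny threshold $c \ll 1$ produces the factor $n^{-2(m-\ell)}$, which exactly offsets the entropy after invoking $m \geq 2\ell$, leaving only a vanishing residual in $\widehat{\omega}_n$ to close the argument.
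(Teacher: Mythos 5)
Your proof is correct and follows the same overall skeleton as the paper's: decompose $\mathsf{N}(\Pi)$ into cycles, recognize that $\|(\mathrm{I}_n-\Pi)X\|_{\operatorname{F}}^2$ is a block-circulant Gaussian quadratic form with eigenvalues $2-2\cos(2\pi r/k)$ on each $k$-cycle, apply a Chernoff bound, and beat the permutation entropy via the scaling $\widehat{\sigma}d^3 = n^{-2/d}/\widehat{\omega}_n$. The difference is in how the Chernoff/MGF step is executed. The paper avoids the determinant formula entirely: after diagonalizing, it splits the eigenvalue indices of each cycle into a ``large'' set $\mathcal{C}^+$ (eigenvalues $\geq 1+\alpha_n^2$, of which there are at least $\lceil k/2 \rceil$) and its complement, moves the quadratic form inequality to a comparison between two independent chi-square sums, and bounds the probability by a product of scalar MGF factors $\sqrt{4\alpha_n^2/(4\alpha_n^2+3)}$ and $2$, yielding the uniform estimate $\operatorname{PROB}(\ell)\le(4\alpha_n/\sqrt3)^{d\ell/2}$ that depends only on the number $\ell$ of non-fixed points. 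The union bound is then over $\ell$ only, with $\#\operatorname{ENUM}(\ell)\le n^\ell$. You instead evaluate the MGF exactly via $\det\bigl(I_m+2t(M_N^\top M_N-cI_m)\bigr)^{-d/2}$ at $t=1/(4c)$, invoking the classical identity $\prod_{j=1}^{k-1}(2-2\cos(2\pi j/k))=k^2$ to get a bound that is sensitive to the full cycle type via $\prod c_i^d$, and you refine the union bound to sum over cycle types rather than merely over $m$. Both approaches close because $n\cdot(\widehat{\sigma}d^3)^{d/2}\asymp\widehat{\omega}_n^{-d/2}\to0$; your version exploits the finer cycle-type count $n^m/\prod c_i$ and the extra $\prod c_i^{-d}$ suppression (at the price of juggling the parameter $\ell$ and the inequality $m\ge2\ell$), while the paper's coarser eigenvalue-grouping trick sidesteps the determinant identity and the partition-function count, leading to a shorter calculation. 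Two minor remarks: your cycle-type count $n^m/\prod c_i$ should strictly be $\binom{n}{m}m!/(\prod c_i\cdot\prod_k n_k!)\le n^m/\prod c_i$, which you use correctly; and you should say a word about why the geometric sums remain uniformly summable in $\ell$ despite the $e^{O(\sqrt{m-\ell})}$ partition-count factor (it is subexponential, so harmless once $\widehat{\omega}_n$ is large), but this is a routine point.
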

\begin{proof}

In the following we prove 
\begin{equation}{\label{eq-E5-resp-X}}
    \Pb\Big( \exists\,\Pi \in \check{\mathfrak{S}}_{n,\epsilon_0}\mbox{ such that } 100d^3 \widehat{\sigma} \big\|X^{\mathsf N(\Pi)} \big\|_{\operatorname{F}} \leq \big\|(\operatorname{I}_n-\Pi) X \big\|_{\operatorname{F}} \Big) = o(1) \,.
\end{equation}
and the proof of the corresponding claim for $Z^{\mathsf N(\Pi)}$ is similar. Write $\alpha_n = 100 \widehat{\sigma} d^3$ for simplicity. We shall prove \eqref{eq-E5-resp-X} by taking a union bound on $\Pi \in \check{\mathfrak{S}}_{n,\epsilon_0}$. For $1 \leq \ell \leq \epsilon_0 n$, define
\begin{equation}{\label{eq-def-ENUM}}
    \operatorname{ENUM}(\ell) = \Big\{ \Pi \in \mathfrak{S}_n : d_{\operatorname{H}}(\Pi,\mathrm{I}_n)=\ell \Big\} \,.
\end{equation}
Clearly we have
\begin{equation}{\label{eq-bound-card-ENUM}}
    \# \operatorname{ENUM}(\ell) \leq \binom{n}{\ell} \cdot \ell! \leq n^{\ell} \,.
\end{equation}
In addition, define
\begin{equation}{\label{eq-def-PROB}}
    \operatorname{PROB}(\ell) = \max_{ \Pi \in \operatorname{ENUM}(\ell) } \Big\{ \Pb \big( \| ( \Pi-\mathrm{I}_n ) X \|_{\operatorname{F}}^2 \leq \alpha_n^2 \| X^{\mathsf N(\Pi)} \|_{\operatorname{F}}^2 \big) \Big\} \,.
\end{equation}
Fix any $\Pi \in \operatorname{ENUM}(\ell)$. Let $\pi$ be the permutation corresponding to $\Pi$. Define $\mathfrak C_k(\Pi)$ as the set of $k$-cycles of $\pi$, and let $n_k(\Pi) = \# \mathfrak{C}_k(\Pi)$. For notation convenience, we abbreviate them as $\mathfrak C_k$ and $n_k$. In addition, for any $\mathcal C \in \mathfrak{C}_k$, we regard $\mathcal C$ as a subset of $[n]$ and write $X^{\mathcal C}$ for convenience. Then the event that $\| ( \Pi-\mathrm{I}_n ) X \|_{\operatorname{F}}^2 \leq \alpha_n^2 \| X^{\mathsf N(\Pi)} \|_{\operatorname{F}}^2$ can be written as 
\begin{equation}\label{eq-summand-union-E5}
    \sum_{k\geq 2} \sum_{\mathcal C \in \mathfrak C_k} \sum_{i \in \mathcal C} \sum_{j=1}^d (X_{i,j}-X_{\pi(i),j})^2 \leq \alpha_n^2 \sum_{k\geq 2} \sum_{\mathcal C\in\mathfrak C_k}\sum_{i\in\mathcal C} \sum_{j=1}^dX_{i,j}^2\,.
\end{equation}
Since the random variables within $$\left\{\sum_{i\in\mathcal{C}}(X_{i,j}-X_{\pi(i),j})^2-\alpha_n^2X_{i,j}^2:\mathcal{C}\in\bigcup_{1\leq k\leq n}\mathfrak C_k,\,1\leq j\leq d\right\}$$
are independent, we proceed to derive a tractable form of 
\begin{equation}\label{eq-pre-tractable-qform}
    \sum_{i \in \mathcal C}\big(X_{i,j}-X_{\pi(i),j}\big)^2- \alpha_n^2 \sum_{i\in \mathcal C} X_{i,j}^2	
\end{equation}
for each $k$-cycle $\mathcal C$ and index $j$.
Note that the second item of \eqref{eq-pre-tractable-qform} can be written as $\alpha_n^2 X^{\mathcal C}_j \big( X^{\mathcal C}_j \big)^{\top}$. In addition, the first item of \eqref{eq-pre-tractable-qform} equals to $X^{\mathcal C}_j \mathsf{C}_{\mathsf{circ}}^{\mathcal{C}} \big( X^{\mathcal C}_j \big)^{\top}$, where
\begin{equation}\label{eq-pre-tractatble-qform-matrix-E5}
    \mathsf C_{\mathsf{circ}}^{\mathcal{C}} :=
    \begin{bmatrix}
    2 & -1 & 0 &\cdots & 0 & -1 \\
    -1 & 2 & -1 &\cdots & 0 & 0\\
    0 & -1 & 2 &\cdots & 0 & 0\\
    \vdots & \vdots &\vdots &\ddots &\vdots &\vdots\\
    0 & 0 & 0 &\cdots & 2 & -1\\
    -1 & 0 &0 &\cdots &-1 & 2\\
    \end{bmatrix} \,.
\end{equation}
Note that we may diagonalize $\mathsf C_{\mathsf{circ}}^{\mathcal{C}}$ with $\mathsf C_{\mathsf{circ}}^{\mathcal{C}}=\mathsf{O}_{\mathsf{circ}}^{\mathcal{C}}\mathsf{\Lambda}_{\mathsf{circ}}^{\mathcal{C}}(\mathsf{O}_{\mathsf{circ}}^{\mathcal{C}})^\top$ where $\mathsf{O}_{\mathsf{circ}}^{\mathcal{C}}$ is an orthogonal matrix and $\Lambda_{\mathsf{circ}}^{\mathcal{C}}$ is a diagonal matrix with entries given by $\mathsf{C}_{\mathsf{circ}}^{\mathcal{C}}$'s eigenvalues 
(since $\mathsf C_{\mathsf{circ}}^{\mathcal{C}}$ is a circular matrix)
\begin{equation}\label{eq-eigenvalue-qmatrix-E5}
    \lambda_r^{\mathcal C}=2-2\cos\Big(\frac{2\pi r}{k}\Big) , \quad	0\leq r\leq k-1\,.
\end{equation}
Define the block diagonal matrix $\mathsf O_{\mathsf{circ}}=\operatorname{diag}(\{\mathsf O_{\mathsf{circ}}^{\mathcal{C}}:\,\mathcal{C}\in\cup_{1\leq k\leq n}\mathfrak C_k\})$ and $\mathsf \Lambda_{\mathsf{circ}}=\operatorname{diag}(\{\mathsf \Lambda_{\mathsf{circ}}^{\mathcal{C}}:\,\mathcal{C}\in\cup_{1\leq k\leq n}\mathfrak C_k\})$. For each $1\leq j\leq d$, define $X_j(\Pi)=(X_j^{\mathcal{C}})_{\mathcal{C}\in\cup_{1\leq k\leq n}\mathfrak C_k}$ (which is a permuted version of $X_j$ according to the cycle decomposition of $\Pi$). We define 
$$ 
W_j=X_j(\Pi)\mathsf O_{\mathsf{circ}} \,.
$$
Since $\mathsf O_{\mathsf{circ}}$ is an orthogonal matrix, we have that $(W_{i,j})_{1\leq i\leq n,\,1\leq j\leq d}$ are i.i.d. standard normal random variables. Then we rewrite \eqref{eq-pre-tractable-qform} as

\begin{equation*}
    \sum_{i \in \mathcal C} \lambda_i^{\mathcal C} W_{i,j}^2 - \alpha_n^2 \sum_{i \in \mathcal C} W_{i,j}^2 \,,
\end{equation*} 
and thus \eqref{eq-summand-union-E5} can be written as
\begin{equation}
    \sum_{j=1}^d \sum_{k\geq 2} \sum_{\mathcal C \in \mathfrak C_k} \sum_{i \in \mathcal C} \lambda_i^{\mathcal C} W_{i,j}^2 \leq \alpha_n^2 \sum_{j=1}^d \sum_{k\geq 2} \sum_{\mathcal C\in\mathfrak C_k} \sum_{i\in\mathcal C} W_{i,j}^2\,.
\end{equation}
Note that we have $\lambda_r^\mathcal C\geq 0$ for all cycle $\mathcal C$. In addition, for each $\mathcal C$ we define 
\begin{equation}
    \mathcal{C}^{+} := \big\{ r \in \mathcal C : \lambda_r^\mathcal C \geq 1 + \alpha_n^2 \big\} \,.
\end{equation}
Then it is straightforward from \eqref{eq-eigenvalue-qmatrix-E5} that $\# \mathcal{C}^{+} \geq \lceil \frac{k}{2}\rceil$ for any $\mathcal C \in \mathfrak{C}_k$.
Therefore the probability of \eqref{eq-summand-union-E5} is bounded by
\begin{align}
    &\Pb\Bigg( \sum_{k\geq 2} \sum_{\mathcal C\in\mathfrak C_k}\sum_{i\in\mathcal C^{+}} \sum_{j=1}^d (1+\alpha_n^2) W_{i,j}^2 \leq \alpha_n^2 \sum_{k\geq 2} \sum_{\mathcal C \in \mathfrak C_k} \sum_{i\in\mathcal \mathcal C} \sum_{j=1}^d W_{i,j}^2 \Bigg) \nonumber \\
    = \ &\Pb\Bigg( \frac{3}{8\alpha_n^2} \sum_{k\geq 2} \sum_{\mathcal C\in\mathfrak C_k}\sum_{i\in\mathcal C^{+}} \sum_{j=1}^d W_{i,j}^2 \leq \frac{3}{8}  \sum_{k\geq 2} \sum_{\mathcal C \in \mathfrak C_k} \sum_{i\in \mathcal C\setminus\mathcal{C}^+} \sum_{j=1}^d W_{i,j}^2 \Bigg) \nonumber \\
    \leq \ &\prod_{k\geq 2}\prod_{\mathcal C\in\mathfrak C_k} \prod_{i\in\mathcal C^{+}}\prod_{j=1}^d \mathbb E \Big[ e^{ -3W_{i,j}^2/(8\alpha_n^2)} \Big] \times \prod_{k\geq 2} \prod_{\mathcal C\in\mathfrak C_k} \prod_{i\in\mathcal C\setminus\mathcal C^{+}} \prod_{j=1}^d \mathbb{E} \Big[ e^{ 3W_{i,j}^2/8 } \Big] \nonumber \\
    \leq \ &\Big(\sqrt{ 4\alpha_n^2/(4\alpha_n^2+3) } \Big)^{\sum_{k\geq 2} d n_k \lceil\frac{k}{2}\rceil} \times 2^{\sum_{k\geq 2} d n_k \lfloor\frac{k}{2}\rfloor } \leq \big(\frac{4}{\sqrt{3}}\alpha_n \big)^{\frac{d\ell}{2}} \,.
\end{align}
Here in the first inequality we used the fact that for two independent random variables $\xi$ and $\zeta$, we have that $$\Pb(\xi\leq\zeta)=\Pb(e^{\zeta-\xi}\geq 1)\leq\mathbb E(e^{\zeta-\xi})=\mathbb E(e^{\zeta})\mathbb E(e^{-\xi})\,,$$ and in the second inequality we used $\sum_{k \geq 2} kn_k = \#\{ i:\pi(i) \neq i \}=\ell$. In conclusion, we have shown
\begin{equation}{\label{eq-bound-PROB}}
    \operatorname{PROB}(\ell) \leq \big(\frac{4}{\sqrt{3}}\alpha_n \big)^{\frac{d\ell}{2}} \,.
\end{equation}
Combining \eqref{eq-bound-card-ENUM} and \eqref{eq-bound-PROB}, we get that the left hand side of \eqref{eq-E5-resp-X} is bounded by
\begin{align*}
    \sum_{\ell=1}^{\epsilon_0 n} \#\operatorname{ENUM}(\ell) \cdot \operatorname{PROB}(\ell) &\leq \sum_{\ell=1}^{\epsilon_0 n} n^{\ell} \cdot \big(\frac{4}{\sqrt{3}}\alpha_n \big)^{\frac{d\ell}{2}} \\
    &= \sum_{\ell=1}^{\epsilon_0 n} n^{\ell} \cdot \big( \frac{4}{\sqrt{3}} \cdot 100 \widehat{\sigma} d^3  \big)^{\frac{d\ell}{2}} \overset{\eqref{eq-def-widehat-omega}}{=} o(1) \,.
\end{align*}
This finishes the proof of the lemma.
\end{proof}

\begin{cor}
    {\label{cor-total-good-event-exact-recovery}}
    Define $\mathcal E_{\diamond \diamond}= \mathcal E_{\diamond} \cap \mathcal E_4 \cap \mathcal E_5$.
    We have $\mathbb P(\mathcal E_{\diamond \diamond})=1-o(1)$.
\end{cor}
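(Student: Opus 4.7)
The plan is to observe that the statement is a purely formal consequence of three results that have already been established earlier in this subsection, and then close it out by a union bound. Concretely, by definition
\[
    \mathcal{E}_{\diamond\diamond} = \mathcal{E}_{\diamond} \cap \mathcal{E}_4 \cap \mathcal{E}_5\,,
\]
so
\[
    \Pb(\mathcal{E}_{\diamond\diamond}^{c}) \leq \Pb(\mathcal{E}_{\diamond}^{c}) + \Pb(\mathcal{E}_4^{c}) + \Pb(\mathcal{E}_5^{c})\,.
\]
Each term on the right-hand side is $o(1)$: the first by Lemma~\ref{lem-total-good-event-almost-exact-recovery}, the second by Lemma~\ref{lem-typical-E4}, and the third by Lemma~\ref{lem-typical-E5}. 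Summing three $o(1)$ quantities yields the desired conclusion $\Pb(\mathcal E_{\diamond\diamond})=1-o(1)$.

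Since this is just a bookkeeping step stitching together the three probability estimates obtained above, there is no real obstacle; the substantive work has been done in establishing Lemmas~\ref{lem-total-good-event-almost-exact-recovery}, \ref{lem-typical-E4}, and \ref{lem-typical-E5}. The only thing one should double-check is that the three events are defined with respect to the same underlying probability space (which they are, since all are built from $X$ and $Z$), so that the union bound applies without any further measurability issues.
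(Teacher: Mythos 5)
Your proof is correct and matches the paper's argument exactly: the paper likewise dispatches the corollary in one line by citing Lemmas~\ref{lem-total-good-event-almost-exact-recovery}, \ref{lem-typical-E4} and \ref{lem-typical-E5}, and your explicit union-bound computation is just that same step written out.
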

\begin{proof}
    The corollary follows by combining Lemma~\ref{lem-total-good-event-almost-exact-recovery}, Lemma~\ref{lem-typical-E4} and Lemma~\ref{lem-typical-E5}.
\end{proof}




Now we are ready to prove Theorem~\ref{thm-exact-recovery}.
\begin{proof}[Proof of Theorem~\ref{thm-exact-recovery}.]
Recall the definition of $\Psi_0$ in $\mathcal{E}_0$. It suffices to show that $\mathcal E_{\diamond \diamond}$ implies that
\begin{equation}{\label{eq-exact-recovery-goal-1}}
    \big\langle \Pi U \Psi, V \big\rangle \leq \big\langle U \Psi_0, V \big\rangle, \ \forall\,\Pi \in \check{\mathfrak{S}}_{n,\epsilon_0}, \Psi \in \mathcal{S}_d \,.
\end{equation}
Applying $\mathcal E_4$, it suffices to show that $\mathcal E_{\diamond \diamond}$ implies 
\begin{equation}{\label{eq-exact-recovery-goal-2}}
    \big\langle \Pi U \Psi_0, V \big\rangle \leq \big\langle U \Psi_0, V \big\rangle, \ \forall\,\Pi \in \check{\mathfrak{S}}_{n,\epsilon_0} \,.
\end{equation}
Assume on the contrary that $\langle \Pi U \Psi_0, V \rangle > \langle U \Psi_0, V \rangle$ for some $\Pi \in \check{\mathfrak{S}}_{n,\epsilon_0}$. Then we have (recall \eqref{eq-def-SVD})
\begin{equation}{\label{eq-exact-recovery-relaxation-1}}
    \big\langle (\Pi-\operatorname{I}_n)X Q\Lambda^{-1}\Psi_0, Y R\Sigma^{-1} \big\rangle > 0 \Longrightarrow \big\langle (\Pi-\operatorname{I}_n) XQ \Lambda^{-1} \Psi_0 \Sigma^{-1} R^{\top} , Y \big\rangle \geq 0 \,.
\end{equation}
(Note that we switched from $>$ to $\geq$ above since this does not matter for our proof.) Thus we have
\begin{equation}{\label{eq-exact-recovery-relaxation-2}}
    \big\langle (\operatorname{I}_n-\Pi) X , Y  \big\rangle \leq \big\langle (\operatorname{I}_n-\Pi) X (\operatorname I_d-n Q\Lambda^{-1} \Psi_0 \Sigma^{-1} R^\top), Y \big\rangle \,.
\end{equation}
This shows (recall now $Y=X+\sigma Z$)
\begin{equation}{\label{eq-exact-recovery-relaxation-3}}
    \big\langle (\operatorname{I}_n-\Pi) X , X  \big\rangle \leq \sigma \big| \big\langle (\Pi-\operatorname{I}_n) X , Z \big\rangle \big| + \big\langle (\operatorname{I}_n-\Pi) X (\operatorname I_d-nQ \Lambda^{-1} \Psi_0 \Sigma^{-1} R^\top), Y \big\rangle \,.
\end{equation}
Since the $i$-th row of $(\mathrm{  I}_n-\Pi)X$ is zero if $i \in \mathsf{N}(\Pi)$, we have
\begin{align}
    \big| \big\langle (\operatorname{I}_n-\Pi) X , Z  \big\rangle \big| = \big| \big\langle ((\operatorname{I}_n-\Pi) X)^{\mathsf N(\Pi)} , Z^{\mathsf N(\Pi)} \big\rangle \big| \leq \| (\operatorname{I}_n-\Pi) X \|_{\operatorname{F}} \| Z^{\mathsf N(\Pi)} \|_{\operatorname{F}} \,. \label{eq-exact-recovery-relaxation-4}
\end{align}
Under $\mathcal E_1$ we have $\| \mathrm{I}_d-nQ \Lambda^{-1}\Psi_0 \Sigma^{-1}R^\top \|_{\operatorname{op}} \leq 2 \widehat{\sigma} d^3$, and thus (combining the fact that the $i$'th row of $ (\operatorname{I}_n-\Pi) X (\mathrm{I}_d-nQ \Lambda^{-1}\Psi_0 \Sigma^{-1}R^\top)$ is zero for $i \in \mathsf{N}(\Pi)$)
\begin{align}
    &\big\langle (\operatorname{I}_n-\Pi) X (\mathrm{I}_d-nQ \Lambda^{-1}\Psi_0 \Sigma^{-1}R^\top), Y \big\rangle \nonumber \\
    = \ & \big\langle (\operatorname{I}_n-\Pi) X (\mathrm{I}_d-nQ \Lambda^{-1}\Psi_0 \Sigma^{-1}R^\top), Y^{\mathsf N(\Pi)} \big\rangle \nonumber \\
    \leq \ & \big\| (\operatorname{I}_n-\Pi) X (\mathrm{I}_d-nQ \Lambda^{-1}\Psi_0 \Sigma^{-1}R^\top) \big\|_{\operatorname{F}} \big\| Y^{\mathsf N(\Pi)} \big\|_{\operatorname{F}} \nonumber \\
    \leq \ & \big\| (\operatorname{I}_n-\Pi) X \big\|_{\operatorname{F}} \big\| \mathrm{I}_d-nQ \Lambda^{-1}\Psi_0 \Sigma^{-1}R^\top \big\|_{\operatorname{op}} \big\| Y^{\mathsf N(\Pi)} \big\|_{\operatorname{F}} \nonumber \\
    \leq \ & 2 d^3 \widehat{\sigma} \big\| (\operatorname{I}_n-\Pi) X \big\|_{\operatorname{F}} \big\| Y^{\mathsf N(\Pi)} \big\|_{\operatorname{F}} \,. \label{eq-exact-recovery-relaxation-5}
\end{align}
Plugging \eqref{eq-exact-recovery-relaxation-4} and \eqref{eq-exact-recovery-relaxation-5} into \eqref{eq-exact-recovery-relaxation-3}, we obtain 
\begin{align*}
    \big\langle (\operatorname{I}_n-\Pi) X, X \big\rangle &= \frac{1}{2} \big\| (\operatorname{I}_n-\Pi) X \big\|_{\operatorname{F}}^2 \leq 2 d^3 \widehat{\sigma} \big\| (\operatorname{I}_n-\Pi) X \big\|_{\operatorname{F}} \big( \big\| Y^{\mathsf N(\Pi)} \big\|_{\operatorname{F}} + \big\| Z^{\mathsf N(\Pi)} \big\|_{\operatorname{F}} \big) \\
    &\leq 2 d^3 \widehat{\sigma} \big\| (\operatorname{I}_n-\Pi) X \big\|_{\operatorname{F}} \big( \big\| X^{\mathsf N(\Pi)} \big\|_{\operatorname{F}} + (1+\sigma) \big\| Z^{\mathsf N(\Pi)} \big\|_{\operatorname{F}} \big) \,,
\end{align*}
which contradicts to the event $\mathcal E_5$. This implies \eqref{eq-exact-recovery-goal-1} holds under $\mathcal E_{\diamond \diamond}$.    
\end{proof}

Now we turn to the distance model. It remains to show that
\begin{align}\label{eq-exact-finalgoal-dist}
    \mathbb{P} \Big( \exists\, \Pi\in \check{\mathfrak{S}}_{n,\epsilon_0}, \Psi \in \mathcal{S}_{d}, \big\langle \Pi \widetilde U\Psi,\widetilde V \big\rangle > \big\langle \widetilde U \widetilde{\Psi}_0,\widetilde V \big\rangle \Big) = o(1)\,,
\end{align}
where $\widetilde{\Psi}_0$ is defined in Proposition~\ref{prop-diff-othogonal-matrix-distance}. Define 
\begin{equation}{\label{eq-def-good-event-3-distance}}
    \widetilde{\mathcal E}_4 = \Big\{ \arg \max_{\Psi\in \mathcal S_d} \big\langle \Pi \widetilde U \Psi,\widetilde V \big\rangle = \widetilde\Psi_0, \ \forall\,  \Pi\in\mathfrak S_n\setminus\mathfrak S_{n,\epsilon_0} \Big\} \,.
\end{equation}
In addition, define 
\begin{equation}{\label{eq-def-good-event-4-distance}}
    \begin{aligned}
        \widetilde{\mathcal E}_5 = \Big\{ & 50d^3 \widehat{\sigma} \big\| \widetilde X^{\mathsf N(\Pi)} \big\|_{\operatorname{F}}, 50d^3 \widehat{\sigma} \big\| \widetilde Z^{\mathsf N(\Pi)} \big\|_{\operatorname{F}} \leq \big\| (\operatorname{I}_n-\Pi)\widetilde X \big\|_{\operatorname{F}},  \, \forall\,\Pi \in \check{\mathfrak{S}}_{n,\epsilon_0} \Big\} \,.
    \end{aligned}
\end{equation}

\begin{lemma}{\label{cor-total-good-event-exact-recovery-distance}}
    Define $\widetilde{\mathcal E}_{\diamond \diamond}= \widetilde{\mathcal E}_{\diamond} \cap \widetilde{\mathcal E}_4 \cap \widetilde{\mathcal E}_5$.
    We have $\mathbb P(\widetilde{\mathcal E}_{\diamond \diamond})=1-o(1)$.
\end{lemma}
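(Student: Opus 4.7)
The plan is to mirror Corollary~\ref{cor-total-good-event-exact-recovery} and reduce the distance-model statement to the dot-product one using the closeness estimates already in place. Since $\Pb(\widetilde{\mathcal E}_\diamond)=1-o(1)$ is furnished by Lemma~\ref{lem-good-events-E2-distance-model}, it suffices to prove $\Pb(\widetilde{\mathcal E}_4) = 1-o(1)$ and $\Pb(\widetilde{\mathcal E}_5) = 1-o(1)$, which I will treat as direct analogs of Lemma~\ref{lem-typical-E4} and Lemma~\ref{lem-typical-E5} respectively.

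For $\widetilde{\mathcal E}_4$, writing $\Psi = \operatorname{diag}(s_1,\ldots,s_d)$ shows the maximizer must obey $s_i = \operatorname{sgn}\langle \Pi\widetilde u_i, \widetilde v_i\rangle$. From \eqref{eq-diff-UV-dist} one reads off $|\langle \widetilde u_i, \widetilde v_i\rangle| = 1+o(1)$ with the sign equal to the $i$-th diagonal entry of $\widetilde\Psi_0$, so the task reduces to verifying $|\langle \Pi\widetilde u_i,\widetilde v_i\rangle - \langle\widetilde u_i,\widetilde v_i\rangle|\leq 0.9$ uniformly for $\Pi\notin\mathfrak S_{n,\epsilon_0}$. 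Cauchy--Schwarz reduces this further to a small-mass bound of the form $\sum_{\ell\in A}\widetilde u_i(\ell)^2\leq \tfrac{1}{5}$ for every $A\subset[n]$ with $|A|\leq \epsilon_0 n$, and the operator-norm closeness $\|\widetilde U - U\Psi_1\|_{\op}\leq n^{-0.49}$ supplied by Lemma~\ref{lem-diff-UUtilde} allows me to import the corresponding Haar-type concentration \eqref{eq-goal-E3} already established for $u_i$ in the proof of Lemma~\ref{lem-typical-E4}, losing only a vanishing amount in the numerical constant.

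For $\widetilde{\mathcal E}_5$ the key identity is $\Pi\mathbf F = \mathbf F$ for every permutation matrix, which yields $(I_n-\Pi)\widetilde X = (I_n-\Pi)X$ and $(I_n-\Pi)\widetilde Z = (I_n-\Pi)Z$ (where $\widetilde Z := (I_n-\mathbf F)Z$), while under $\widetilde{\mathcal E}_\star$ and \eqref{eq-Fnorm-FX} one has $\|\widetilde X^{\mathsf N(\Pi)}\|_{\operatorname F}\leq \|X^{\mathsf N(\Pi)}\|_{\operatorname F} + \sqrt{d\widehat\omega_n}$ and similarly for $\widetilde Z$. I then split on whether $\|X^{\mathsf N(\Pi)}\|_{\operatorname F}\geq\sqrt{d\widehat\omega_n}$: in the \emph{large} regime $\|\widetilde X^{\mathsf N(\Pi)}\|_{\operatorname F}\leq 2\|X^{\mathsf N(\Pi)}\|_{\operatorname F}$, so a violation of $\widetilde{\mathcal E}_5$ would directly contradict $\mathcal E_5$; in the \emph{small} regime the chi-squared concentration $\|X^{\mathsf N(\Pi)}\|_{\operatorname F}^2\asymp d|\mathsf N(\Pi)|$ forces $|\mathsf N(\Pi)|$ to be $O(\widehat\omega_n)$, and a violation then requires a small-ball event $\|(I_n-\Pi)X\|_{\operatorname F}<100d^3\widehat\sigma\sqrt{d\widehat\omega_n}$ for the cycle-structured quadratic form in \eqref{eq-pre-tractatble-qform-matrix-E5}, which I control by the same Laplace-transform/eigenvalue bound used in Lemma~\ref{lem-typical-E5} together with a union bound over the $n^{O(\widehat\omega_n)}$ such permutations. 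The main obstacle I anticipate is precisely this small-$|\mathsf N(\Pi)|$ regime, where the $\sqrt{d\widehat\omega_n}$ slack introduced by passing from $X$ to $\widetilde X$ is comparable to the typical scale of $\|X^{\mathsf N(\Pi)}\|_{\operatorname F}$; it is the exact-recovery threshold $\sigma = o(d^{-3}n^{-2/d})$, together with the sharp lower-tail behavior of the cycle quadratic form, that makes the small-ball/permutation-count trade-off go through.
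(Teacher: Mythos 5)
Your treatment of $\widetilde{\mathcal E}_4$ matches the paper exactly: the maximizer constraint $s_i=\operatorname{sgn}\langle\Pi\widetilde u_i,\widetilde v_i\rangle$, the closeness $|\langle\widetilde u_i,\widetilde v_i\rangle|=1+o(1)$ from \eqref{eq-diff-UV-dist}, and the transfer of the small-mass bound \eqref{eq-goal-E3} from $u_i$ to $\widetilde u_i$ via Lemma~\ref{lem-diff-UUtilde} is precisely what the paper does. Your $\widetilde{\mathcal E}_5$ argument also starts correctly with the identity $(\operatorname I_n-\Pi)\mathbf F=0$ and the bound $\|\widetilde X^{\mathsf N(\Pi)}\|_{\operatorname F}\leq\|X^{\mathsf N(\Pi)}\|_{\operatorname F}+\|\mathbf FX\|_{\operatorname F}$, and the deterministic split $a+b\geq c\Rightarrow 2a\geq c\text{ or }2b\geq c$ on the slack term is exactly the paper's manipulation (the paper uses slack $d\widehat\omega_n^{0.01}$, you use $\sqrt{d\widehat\omega_n}$; either is fine).

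However, your handling of the resulting second term, $\mathbb P\big(\exists\,\Pi\neq\operatorname I_n:\;100d^3\widehat\sigma\sqrt{d\widehat\omega_n}\geq\|(\operatorname I_n-\Pi)X\|_{\operatorname F}\big)$, introduces an unnecessary and problematic step. You try to first argue that the ``small regime'' $\|X^{\mathsf N(\Pi)}\|_{\operatorname F}<\sqrt{d\widehat\omega_n}$ forces $|\mathsf N(\Pi)|=O(\widehat\omega_n)$ via chi-squared concentration, and then union-bound over $n^{O(\widehat\omega_n)}$ permutations. But that implication does not hold uniformly over $\Pi$ with probability $1-o(1)$ in the low-$d$ regime covered by the theorem: for $\Pi$ with $|\mathsf N(\Pi)|=\ell$, the chi-squared lower tail gives $\mathbb P\big(\|X^{\mathsf N(\Pi)}\|_{\operatorname F}^2\leq d\widehat\omega_n\big)\approx(e\widehat\omega_n/\ell)^{d\ell/2}$, and against the union-bound count $n^\ell$ you only control $\ell\gtrsim\widehat\omega_n\,n^{2/d}$, not $\ell=O(\widehat\omega_n)$ — a gap that is substantial whenever $d\ll\log n$ (and the lemma is stated for all $d\geq 5$). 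The paper sidesteps this entirely: it leaves the second term as a standalone small-ball event over \emph{all} $\Pi\neq\operatorname I_n$ and controls it directly by the same Laplace-transform/eigenvalue estimate used for $\operatorname{PROB}(\ell)$ in Lemma~\ref{lem-typical-E5} (the exponential moment of the cycle quadratic form beats $n^\ell$ at every scale $\ell$). You should drop the conditioning on $|\mathsf N(\Pi)|$ and run the Laplace-transform bound for all $\ell\geq 2$ directly, as the paper indicates.
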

Again, once Lemma~\ref{cor-total-good-event-exact-recovery-distance} has been established, we can complete the proof of Theorem~\ref{thm-exact-recovery-distance} in the same manner as we derived Theorem~\ref{thm-exact-recovery} from Corollary~\ref{cor-total-good-event-exact-recovery}. The only difference is that we will replace all $X,Y,U,V$ with $\widetilde{X}, \widetilde{Y}, \widetilde{U}, \widetilde{V}$ so we omit further details here. Now we provide the proof of Lemma~\ref{cor-total-good-event-exact-recovery-distance}.
\begin{proof}[Proof of Lemma~\ref{cor-total-good-event-exact-recovery-distance}]
    Firstly we deal with $\widetilde{\mathcal{E}}_4$, the proof is similar to that of $\mathcal{E}_4$. Denote $\Psi=\operatorname{diag}(s_1,\ldots,s_d)$, then
\begin{align*}
    \big\langle \Pi \widetilde U \Psi, \widetilde V \big\rangle = \big\langle [s_1 \Pi \widetilde u_1, \ldots, s_d \Pi\widetilde u_d], [\widetilde v_1,\ldots,\widetilde v_d] \big\rangle = \sum_{i=1}^{d} s_i \big\langle \Pi \widetilde u_i,\widetilde v_i \big\rangle \,.
\end{align*}
So the maximizer $\Psi$ of $\big\langle \Pi \widetilde U \Psi,\widetilde V \big\rangle$ must satisfy $s_i = \operatorname{sgn}(\langle \Pi\widetilde u_i,\widetilde v_i \rangle)$. It remains to show with probability $1-o(1)$ we have 
\begin{align}
    \operatorname{sgn}(\langle \Pi \widetilde u_i,\widetilde v_i \rangle) = \operatorname{sgn}(\langle \widetilde u_i,\widetilde v_i \rangle), \ \forall\, 1 \leq i \leq d, \Pi\in\mathfrak S_n\setminus\mathfrak S_{n,\epsilon_0} \,. \label{eq-goal-uniqueness-maximizer-distance}
\end{align}
Recall the definition of $\widetilde\Psi_0$ in Proposition~\ref{prop-diff-othogonal-matrix-distance} and we write $\widetilde{\Psi}_0 = \operatorname{diag}(\widetilde\psi_1,\ldots,\widetilde\psi_d) \in \mathcal{S}_{d}$. By Corollary~\ref{cor-diff-UV-distance-model}, we have that for each $1 \leq i \leq d$
$$ (4d^3 \widehat{\sigma})^2 \geq \big\| \widetilde U \widetilde\Psi_0 - \widetilde V \big\|_{\operatorname{F}}^2 = \sum_{j=1}^{d} \| \widetilde\psi_j \widetilde u_j - \widetilde v_j \|^2 \geq \| \widetilde\psi_i \widetilde u_i - \widetilde v_i \|^2 = 2 \big( 1- \widetilde\psi_i \langle\widetilde u_i,\widetilde v_i \rangle \big) \,. $$
Thus $|\langle \widetilde u_i,\widetilde v_i\rangle|=1+o(1)$, so to prove \eqref{eq-goal-uniqueness-maximizer-distance} it suffices to show that with probability $1-o(1)$ we have $|\langle \Pi \widetilde u_i,\widetilde v_i \rangle - \langle \widetilde u_i,\widetilde v_i \rangle| \leq 0.9$ for all $1 \leq i \leq d$ and $\Pi\in\mathfrak S_n\setminus\mathfrak S_{n,\epsilon_0}$. 
Note that
\begin{align*}
    &\big( \langle \Pi \widetilde u_i,\widetilde v_i \rangle - \langle \widetilde u_i,\widetilde v_i \rangle \big)^2 = \big\langle (\Pi-\operatorname{I}_n) \widetilde u_i, \widetilde v_i \big\rangle^2\\
    \leq& \big\| (\Pi-\operatorname{I}_n) \widetilde
    u_i \big\|^2 \leq 4 \sum_{\pi(\ell) \neq \ell} \widetilde u_i(\ell)^2\leq 4\sum_{\pi(\ell)\neq\ell}u_i(l)^2+n^{-0.48}\,,
\end{align*}
where the last inequality follows from Lemma~\ref{lem-diff-UUtilde}. By \eqref{eq-goal-E3}, the desired result for $\widetilde{\mathcal{E}}_3$ follows. 

For $\widetilde{\mathcal{E}}_5$, we first apply Chebyshev's inequality on $\|\mathbf F X\|_{\Fop}$ and get that with probability $1-o(1)$
\begin{equation}\label{eq-InminusPi-tildeX}
    \|\widetilde X^{\mathsf N(\Pi)}\|_{\operatorname{F}} \leq \|X^{\mathsf N(\Pi)} \|_{\operatorname{F}}+ \|\mathbf FX\|_{\operatorname{F}} \leq \|X^{\mathsf N(\Pi)} \|_{\operatorname{F}} +d \widehat{\omega}_n^{0.01} \,.
\end{equation}
Since $(\operatorname{I}_n-\Pi)\mathbf{F}=0$, we have 
\begin{equation}
    \label{eq-InminusPi-F}(\operatorname{I}_n-\Pi)\widetilde X=(\operatorname{I}_n-\Pi)X\,.
\end{equation}
Therefore, we have
\begin{align}
    &\Pb \Big( \exists\,\Pi\neq\operatorname{I}_n, 50d^3\widehat{\sigma} \|\widetilde X^{\mathsf N(\Pi)} \|_{\operatorname{F}} \geq \|(\operatorname{I}_n-\Pi) \widetilde X \|_{\operatorname{F}} \Big) \nonumber \\
    \overset{\eqref{eq-InminusPi-tildeX}}{\leq} \ &\Pb \Big( \exists\,\Pi\neq\operatorname{I}_n, 50d^3\widehat\sigma ( \| X^{\mathsf N(\Pi)} \|_{\operatorname{F}} +d\widehat{\omega}_n^{0.01} ) \geq \| (\operatorname{I}_n-\Pi) \widetilde{X} \|_{\operatorname{F}} \Big) \nonumber \\
    \overset{\eqref{eq-InminusPi-F}}{=} \ &\Pb \Big( \exists\,\Pi\neq\operatorname{I}_n, 50d^3\widehat\sigma ( \| X^{\mathsf N(\Pi)} \|_{\operatorname{F}} +d\widehat{\omega}_n^{0.01} ) \geq \| (\operatorname{I}_n-\Pi)X \|_{\operatorname{F}} \Big) \nonumber \\
    \leq \ &\Pb \Big( \exists\,\Pi\neq\operatorname{I}_n, 100d^3\widehat\sigma \|X^{\mathsf N(\Pi)} \|_{\operatorname{F}} \geq \|(\operatorname{I}_n-\Pi)X \|_{\operatorname{F}} \Big) \nonumber \\
    &+ \Pb\Big( \exists\,\Pi\neq\operatorname{I}_n, 100d^4 \widehat{\sigma} \widehat{\omega}_n^{0.01} \geq \|(\operatorname{I}_n-\Pi)X \|_{\operatorname{F}} \Big) \nonumber \,.
\end{align}
The first item in the last inequality is $o(1)$ by \eqref{eq-def-good-event-4} and Corollary~\ref{cor-total-good-event-exact-recovery}. The second item is also $o(1)$ whose proof is analogous to that for $\mathcal{E}_4$ in Corollary~\ref{cor-total-good-event-exact-recovery} and is thus omitted. As for $\widetilde Z^{\mathsf N(\Pi)}$, we reach the same conclusion as \eqref{eq-InminusPi-tildeX}. 
This completes the proof for $\widetilde{\mathcal{E}}_5$. 
\end{proof}

\section*{Acknowledgement} We thank Jian Ding for stimulating discussions as well as for providing us with many helpful suggestions for this manuscript, and Shuangping Li for stimulating discussions on an early stage of the project. Shuyang Gong and Zhangsong Li are partially supported by the National Key R$\&$D program of China (Project No. 2023YFA1010103) and the NSFC Key Program (Project No. 12231002).

\bibliographystyle{plain}
\small

\end{document}